\newtheorem{theorem}{\bf Theorem}[section]
\newtheorem{proposition}[theorem]{\bf Proposition}
\newtheorem{lemma}[theorem]{\bf Lemma}
\newtheorem{assumption}[theorem]{Assumption}
\newcommand{\beq}{\begin{equation}}
\newcommand{\eeq}{\end{equation}}
\newcommand{\ben}{\begin{eqnarray}}
\newcommand{\een}{\end{eqnarray}}
\newcommand{\beno}{\begin{eqnarray*}}
\newcommand{\eeno}{\end{eqnarray*}}
\numberwithin{equation}{section}
\subjclass[2010]{35A01, 35B45, 35Q92, 76F25}
\keywords{Non-shear Flow, Enhanced Dissipation, 3D Keller-Segel Equation}
\title[Enhanced dissipation and blow-up suppression]{Enhanced dissipation and blow-up suppression for the three dimensional Keller-Segel equation with a non-shear incompressible flow}
\author[B.Shi]{Binbin Shi}
\address{School of Mathematics and Statistics, Nanjing University of Science and Technology, Nanjing, 210094, P.R.  China.}
\email{shibb@njust.edu.cn}
\author[W.Wang ]{Weike Wang}
\address{School of Mathematical Sciences, CMA-Shanghai and Institute of Natural Science, Shanghai Jiao Tong University, Shanghai, 200240, P.R.China.}
\email{wkwang@sjtu.edu.cn}
\begin{document}
%\vskip .2in
\begin{abstract}
In this paper, we consider the Cauchy problem for the three dimensional parabolic-elliptic Keller-Segel equation with a large non-shear incompressible flow. Without advection, there exist solution with arbitrarily mass which blow up in finite time. Firstly, we introduce a three dimensional non-shear incompressible flow and study the enhanced dissipation of such flows by resolvent estimate method. Next, we show that the enhanced dissipation of  such flow can suppress blow-up of solution to three dimensional parabolic-elliptic Keller-Segel equation and establish global classical solution with large initial data.
%In this paper, we introduce a three dimensional non-shear incompressible flow and study the dissipation enhancement %of such flows by resolvent estimate method. And we consider the three dimensional ellipitic-parabolic Keller-Segel %equation with a large non-shear incompressible flow. Without the such flows, there exist solution with arbitrarily %mass which blow up in finite time. We show that such flow can suppress blow-up of solution to three dimensional %ellipitic-parabolic Keller-Segel equation and establish global classical solution with large initial data.
\end{abstract}
\maketitle
%\vskip .1in
\section{Introduction and main results}\label{sec.1}
%\vskip .1in
In this paper, we consider the following three dimensional parabolic-elliptic Keller-Segel equation with an incompressible flow on domain $\mathbb{T}^2\times \mathbb{R}=\{x=(x_1,x_2,y)\big|x_1,x_2\in \mathbb{T}_{2\pi}, y\in \mathbb{R}\}$
\begin{equation}\label{eq:1.1}
\begin{cases}
\partial_tn+A{\bf u}\cdot \nabla n-\Delta n+\nabla\cdot(n \nabla c)=0,\\
-\Delta c=n-c,\\
n(t,x)\big|_{t=0}=n_0(x), \ \ \   (t,x)\in \mathbb{R}^+\times \mathbb{T}^2\times \mathbb{R}.
\end{cases}
\end{equation}
Here $n(t,x)$ and $c(t,x)$ denote the micro-oranism density and the chemo-attractant density respectively. The divergence free vector field ${\bf u}$ represents the underlying fluid velocity and $A$ is a positive constant. A non-shear incompressible flow ${\bf u}$ is considered in this paper, which is defined as follows
%We consider a non-shear incompressible flow, which is defined as follows
\begin{equation}\label{eq:1.2}
{\bf u}=(y,y^2,0),
\end{equation}
and we study the global well-posedness of Equation \eqref{eq:1.1} by large advection.

\vskip .05in

If $A=0$, the Equation \eqref{eq:1.1} is  reduced to the classical Keller-Segel equation, and the study of equation has gained widespread attention in the past few decades.
%and the theory of  Keller-Segel equation gained some attention in the past few decades.
It is well know that the Keller-Segel equation in dimensions larger than one, the solution may blow up in finite time. More precisely, for two dimensional case, if $L^1$ norm of initial data $n_0$ is less than $8\pi$, there exists a unique global solution, if $L^1$ norm of initial data $n_0$ exceeds $8\pi$, the solution may blow up in finite time, the related research can refer to ~\cite{Herrero.1996,Herrero.1997,Nagai.1995}. In the higher dimensional case, the blow-up may occurs for solutions with arbitrary small $L^1$ norm of the initial data, see \cite{Corrias.2004,Winkler.2019}. For the parabolic-parabolic Keller-Segel equation, some similar results can read \cite{Calvez.2008,Schweyer.2014,Winkler.2013}.

\vskip .05in

In fact, it is a more realistic scenario that chemotactic processes take place in a moving fluid. The possible effects result from the interaction of chemotactic and fluid transport process, the related problem have been studied by many authors, see \cite{Che.2016, Liu.2011, Lorz.2010, Wang.2019, Winkler.201201}. %\cite{ Che.2016,Francesco.2010, Duan.2010, Liu.2011, Lorz.2010, Wang.2019, Winkler.201201}.
The Keller-Segel equation with incompressible flow is one of many attempts to take into account the effect of the moving fluid. An interesting question arises whether one can suppress the finite time blow-up by the stabilizing effect of moving fluid. In the recent years, some progresses have been made in proving the suppression of the chemotactic  blow-up by presence of fluid flow. %For the parabolic-elliptic case,
Kiselev and Xu \cite{Kiselev.2016} considered the relaxation enhancing flow which was introduced in \cite{Constantin.2008}, they proved that the solution of the advective Keller-Segel equation does not blow-up in finite time provided the amplitude of the relaxation enhancing flow is large enough. Later for the generalized  Keller-Segel equation with fractional Laplacian and relaxation enhancing flow, the global well-posedness were discussed in~\cite{Hopf.2018,Shi.2019}. Bedrossian and He~\cite{Bedrossian.2017} proved that the shear flows can also suppress the blow-up of solution to parabolic-elliptic Keller-Segel equation. More precisely, they proved that the solution is global in the two dimension case, while for the three dimension case, the global well-posedness is guaranteed only when the initial mass is less than $8\pi$. Feng, Shi and Wang \cite{Feng.2022} considered the three dimensional parabolic-elliptic Keller-Segel equation with large planar helical flow and obtained global solution with large data.
%For the three dimensional case, Feng, Shi and Wang \cite{Feng.2022} obtained global solution by a large planar helical %flow.
 %, which is also a non-shear flow.
%For the parabolic-parabolic case,
He \cite{He.2018} proved that the shear flows can also suppress the blow-up of the solution to parabolic-parabolic Keller-Segel equation in the case of the two dimension case and some smallness assumption. For chemotactic coupled with fluid equations, Zeng, Zhang and Zi \cite{Zeng.2021} considered the two dimensional Keller-Segel-Navier-Stokes equation near the Couette flow, they proved that the solution is global existence if $A$ is large enough.
%if $A$ is large enough, the solution is global in time.
%the solutions of the equation are global in time.%, which considered the parabolic-elliptic case and parabolic-parabolic case.

\vskip .05in

For additional flows of the ambient environment are taken into consideration to prevent the blow-up, the authors proved the enhanced dissipation effect of fluid. It means that the solution undergoes a large growth in its gradient, then dissipative term was large and can dominates the nonlinear effect.
%The analysis of enhanced dissipation effect to the incompressible flows, we can use the RAGE theorem %\cite{Constantin.2008}, hypocercivity \cite{Bedrossian.201701,Villani.2009} and resolvent estimate \cite{Wei.2021}. It %is worth mentioning that the enhanced dissipation effect of shear flow occurs only in one direction and can  in some %sense  suppress one dimension in parabolic-elliptic Keller-Segel equation, hence make two dimension global %well-posedness and three dimension global well-posedness in the case of the initial mass less than $8\pi$.
%\ \ \
%In addition to, the situation is  somewhat different in the case of the parabolic-parabolic Keller-Segel system, the shear flow has both enhanced dissipation effect and destabilizing effect in the system \eqref{eq:1.1}. On the one hand, same as parabolic-elliptic case, the shear flow has enhanced dissipation effect, which is a good effect. On the other hand, the advection $Au\cdot \nabla c$ in the system \eqref{eq:1.1} creates large gradient for the chemo-attractant density $c$.
Recently, the enhanced dissipation of shear flow has been widely studied, such as Couette flow \cite{Bedrossian.201602}, Poiseuille flow \cite{Coti.2020} and Kolmogorov flow \cite{Wei.201901,Wei.2020}. And some new analysis method was introduced, such as hypocercivity \cite{Bedrossian.201701} and resolvent estimate \cite{Wei.2021}.  It is worth mentioning that the enhanced dissipation effect of shear flow occurs only in the nonzero mode of $x_1$ and the
%in one direction and can in some sense  suppress one dimension in parabolic-elliptic Keller-Segel equation, hence make %two dimension global well-posedness and three dimension global well-posedness in the case of the initial mass less %than $8\pi$.
one dimensional parabolic-elliptic Keller-Segel equation is global, hence make two dimension global well-posedness and three dimension global well-posedness in the case of the initial mass less than $8\pi$.

\vskip .05in

%In this paper, we consider the system \eqref{eq:1.1}, the goal is to show that the blow-up solution of the three %dimensional parabolic-parabolic Keller-Segel systems can be suppressed through the planar helical flow. This question %is motivated by the works of Feng et.al. \cite{Feng.2022}, He \cite{He.2018}. We know that the shear flow can not %suppress blow-up of the three dimensional parabolic-elliptic Kelle-Segel system, because no enhanced dissipation part %of solution is a two dimensional system. In \cite{Feng.2022}, authors introduced the planar helical flow, and consider %the three dimensional parabolic-elliptic Keller-Segel system with planar helical flow. Then prove that no enhanced %dissipation part of solution is a two dimensional system in the three dimensional parabolic-elliptic Keller-Segel %system, and obtained the global existence. For parabolic-parabolic case, He \cite{He.2018} considered the two %dimensional Keller-Segel systems with shear flow, but for three dimensional case, the similar with parabolic-elliptic %Keller-Segel system, the shear flow can not suppress the blow-up solution of the parabolic-parabolic Keller-Segel %system. Hence we consider the three dimensional parabolic-parabolic Keller-Segel systems with planar helical flow in %this paper. Since the destabilizing effect of mixing in system \eqref{eq:1.1}, some smallness assumption on the %initial data is needed.

In this paper, we consider the Equation \eqref{eq:1.1}, the goal is to show that the blow-up solution of the three dimensional Keller-Segel equation can be suppressed through a non-shear incompressible flow in \eqref{eq:1.2}. We will prove the global existence of solution to three dimensional Keller-Segel equation with an advective flow. This question is motivated by works of Bedrossian et.~al \cite{Bedrossian.2017} and Feng et.~al \cite{Feng.2022}. We will prove that the $L^2$ norm of the solution is bounded uniformly in time by choosing the flow with large amplitude. Actually, since the $L^2$ estimate is supercritical estimate for the three dimensional Keller-Segel equation, see~\cite{Kiselev.2016}, the global classical solution can be achieved. To point out, Bedrossian and He~\cite{Bedrossian.2017} studied the global existence of Keller-Segel equation with a shear flow in dimension $d=3$, which is achieved when the initial mass is less than $8\pi$. In this paper, we prove the global existence with any initial data, instead with the $8\pi$ restriction.

\vskip .05in

For the enhanced dissipation of non-shear flow in \eqref{eq:1.2}, we consider the advection-diffusion equation
\begin{equation}\label{eq:001}
\partial_tf+A{\bf u}\cdot \nabla f-\Delta f=0,\ \ \ f(0,x)=f_0(x),
\end{equation}
where $x=(x_1,x_2,y)\in \mathbb{T}^2\times\mathbb{R}$. The enhanced dissipation effect of fluid is an interesting phenomenon, it means that the dissipation effect will be enhanced and the $L^2$ norm of solution to Equation \eqref{eq:001} has a faster decaying rate if $A$ is large enough. If we consider the incompressible flow ${\bf u}$ in \eqref{eq:1.2} and take the time scale, the Equation \eqref{eq:001} can be written as
\begin{equation}\label{eq:1.3}
\partial_tf+L_Af=0,\ \ f(0,x)=f_0(x),
\end{equation}
%where $x=(x_1,x_2,y)\in \mathbb{T}^2\times \mathbb{R}$ and
where
\begin{equation}\label{eq:1.4}
L_A=-A^{-1}\Delta+y\partial_{x_1}+y^2\partial_{x_2}.
\end{equation}
%\vskip .05in
%Here
In this paper, we study the enhanced dissipation of non-shear flow in \eqref{eq:1.2} by the Equation \eqref{eq:1.3}. By the semigroup theory and the definition of  linear operator \eqref{eq:1.4}, we only to study semigroup with the operator $-L_A$ as generator and give the semigroup estimate.

%\newpage
%First, we study the enhanced dissipation of three dimensional non-shear incompressible flow ${\bf u}$ in %\eqref{eq:1.2}, consider the linear equation
%\begin{equation}\label{eq:1.3}
%\partial_tf+L_Af=0,\ \ f(0,x)=f_0(x),
%\end{equation}
%where $x=(x_1,x_2,y)\in \mathbb{T}^2\times \mathbb{R}$ and
%\begin{equation}\label{eq:1.4}
%L_A=-A^{-1}\Delta+y\partial_{x_1}+y^2\partial_{x_2}.
%\end{equation}
\vskip .05in

Let denote $P_0$ and $P_{\neq}$ are  projection operator, which are defined that for any $f(x_1,x_2,y)$
\begin{equation}\label{eq:1.5}
f^0=P_0f=\frac{1}{|\mathbb{T}^2|}\int_{\mathbb{T}^2}f(t,x_1,x_2,y)dx_1dx_2,\ \ \ \ f_{\neq}=P_{\neq}f=f-f^0.
\end{equation}
The $e^{-tL_A}$ is a semigroup with the operator $-L_A$ as generator, then we have the following result.
\begin{theorem}\label{thm:1.0}
Let $L_A$ be defined in \eqref{eq:1.4}, then for any $t\geq 0$, one has
\begin{equation}\label{eq:1.6}
\|e^{-tL_A}P_{\neq}\|_{L^2\rightarrow L^2}\leq e^{-\lambda_At+\pi/2},\ \ \ \ \lambda_A=\epsilon_0 A^{-1/2},
\end{equation}
where $P_{\neq}$ is defined in \eqref{eq:1.5}, $\epsilon_0$ is small enough and $A$ is large.
\end{theorem}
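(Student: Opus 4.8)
The plan is to pass to Fourier variables in $(x_1,x_2)\in\mathbb{T}^2$ and reduce the semigroup bound to a uniform resolvent estimate for the resulting family of one-dimensional operators. Expanding $f=\sum_{(k,l)\in\mathbb{Z}^2}f_{k,l}(y)e^{i(kx_1+lx_2)}$, the operator $L_A$ diagonalizes into
\begin{equation*}
H_{k,l}=A^{-1}\big(k^2+l^2-\partial_y^2\big)+i\big(ky+ly^2\big),\qquad (k,l)\in\mathbb{Z}^2,
\end{equation*}
acting on $L^2(\mathbb{R}_y)$, and $P_{\neq}$ retains exactly the modes $(k,l)\neq(0,0)$. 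Since the real part $A^{-1}(k^2+l^2-\partial_y^2)$ is a nonnegative self-adjoint operator and the imaginary part is multiplication by the real function $ky+ly^2$, each $H_{k,l}$ is maximal accretive; by orthogonality of the modes it therefore suffices to bound $\sup_{(k,l)\neq 0}\|e^{-tH_{k,l}}\|_{L^2\to L^2}$.

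For this I would invoke the quantitative Gearhart--Prüss theorem of Wei \cite{Wei.2021}: setting $\Psi_{k,l}:=\inf_{\lambda\in\mathbb{R}}\inf_{\|f\|_{L^2}=1}\|(H_{k,l}-i\lambda)f\|_{L^2}$, one has $\|e^{-tH_{k,l}}\|_{L^2\to L^2}\le e^{-t\Psi_{k,l}+\pi/2}$. The additive constant $\pi/2$ in \eqref{eq:1.6} is precisely the one furnished by this theorem, so the whole statement reduces to the uniform lower bound $\Psi_{k,l}\ge\epsilon_0 A^{-1/2}$ for all $(k,l)\neq0$ and $A$ large, which I would take as the defining property of $\epsilon_0$.

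The heart of the argument is this resolvent estimate. Writing $g=(H_{k,l}-i\lambda)f$ and pairing with $f$, the real and imaginary parts yield
\begin{equation*}
A^{-1}(k^2+l^2)\|f\|^2+A^{-1}\|\partial_y f\|^2\le\|g\|\,\|f\|,\qquad \Big|\int_{\mathbb{R}}(ky+ly^2-\lambda)|f|^2\,dy\Big|\le\|g\|\,\|f\|.
\end{equation*}
The first inequality already gives $\Psi_{k,l}\ge A^{-1}(k^2+l^2)$, which exceeds $\epsilon_0A^{-1/2}$ as soon as $k^2+l^2\gtrsim A^{1/2}$, so only the low modes require the oscillatory mechanism. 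When $l=0$, $k\neq0$ the phase is linear with slope $|k|\ge1$, and the classical multiplier argument — testing the equation against $(ky-\lambda)f$, controlling $\int(ky-\lambda)^2|f|^2$ by $\|g\|$ and $\|\partial_y f\|$, and interpolating the mass of $f$ between the region where the phase is small and its complement — produces the shear rate $\Psi_{k,0}\gtrsim A^{-1/3}|k|^{2/3}\ge A^{-1/3}$. When $l\neq0$ I would complete the square, $ky+ly^2-\lambda=l\big(y+\tfrac{k}{2l}\big)^2-\tilde\lambda$, and translate $y$ to center the nondegenerate critical point; running the analogous multiplier/localization argument with this quadratic phase, whose curvature $|b''|=2|l|\ge2$, gives the slower rate $\Psi_{k,l}\gtrsim A^{-1/2}|l|^{1/2}\ge A^{-1/2}$. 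Taking the infimum over nonzero modes, the bottleneck is exactly these quadratic-phase modes, which yields $\Psi_{k,l}\ge\epsilon_0A^{-1/2}$ and hence \eqref{eq:1.6}.

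I expect the main obstacle to be the resolvent estimate for the quadratic phase ($l\neq0$): making the localization/interpolation step uniform in $(k,l)$ and in $\lambda$, and confirming that the worst case $\tilde\lambda=0$ — where the phase vanishes to second order at its critical point — produces exactly the $A^{-1/2}$ scaling rather than the faster $A^{-1/3}$ rate obtained when the phase has only simple zeros. This degeneracy of the critical point is what distinguishes the non-shear flow \eqref{eq:1.2} from the linear shear case and accounts for the weaker exponent in $\lambda_A=\epsilon_0A^{-1/2}$.
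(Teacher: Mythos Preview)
Your proposal is correct and follows essentially the same route as the paper: Fourier decomposition in $(x_1,x_2)$, reduction via Wei's Gearhart--Pr\"uss theorem to the lower bound $\Psi_{k,l}\ge\epsilon_0 A^{-1/2}$, the case split $l=0$ versus $l\neq0$, completing the square in the quadratic case, and identifying the degenerate quadratic phase as the bottleneck giving the $A^{-1/2}$ rate. The only cosmetic differences are that the paper carries out the resolvent estimate for a slightly more general operator (with a factor $u_0(y)$ satisfying Assumption~\ref{ass:3.1}, then takes $u_0\equiv1$) and uses a bounded multiplier $\chi\approx\mathrm{sign}(b(y)-\tilde\lambda)$ rather than the unbounded phase $(ky-\lambda)$ itself, which sidesteps any weighted-space issue in the pairing; the localization into $E=\{|y-y_j|\ge\delta\}$ and its complement is exactly the interpolation step you describe.
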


\vskip .05in

In the Theorem \ref{thm:1.0},  the enhanced dissipation rate of non-shear flow in \eqref{eq:1.2} is the same as that of the Poiseuille flow ${\bf u}=(y^2,0,0)$, see \cite{Coti.2020}. However, the conditions of enhanced dissipation are different, the enhanced dissipation of shear flow occurs in the nonzero mode of $x_1$, while the enhanced dissipation of non-shear flow in \eqref{eq:1.2} occurs in the nonzero mode of $x_1$ or $x_2$ by the semigroup estimate in \eqref{eq:1.6} and the definition of $P_{\neq}$ in \eqref{eq:1.5}. Otherwise, the non-shear flow in \eqref{eq:1.2} and shear flow have different forms of movement, the velocity of shear flow changes and the direction remains unchanged if $y$ changes, while the velocity and direction of the non-shear flow in \eqref{eq:1.2} both change. Thus, it is a interest problem for considering the enhanced dissipation of the non-shear flow in \eqref{eq:1.2}.%\eqref{eq:001} if ${\bf u}$ is non-shear flow in \eqref{eq:1.2}.

\vskip .05in

Next, we study the enhanced dissipation effect of non-shear flow in nonlinear equations. In this paper, we consider three dimensional parabolic-elliptic Keller-Segel equation with the non-shear flow in \eqref{eq:1.2}. We hope that the enhanced dissipation effect of non-shear flow in \eqref{eq:1.2} can suppress the blow-up of Keller-Segel equation and establish the global classical solution with large data.

\vskip .05in
Now the main result of this paper read as follows.

\begin{theorem}\label{thm:1.1}
Let ${\bf u}$ be a non-shear flow in \eqref{eq:1.2} and initial data $n_0\geq0, n_0\in H^s(\mathbb{T}^2\times \mathbb{R}), s>\frac{7}{2}$, there exists $A_0=A(n_0)$, such that if $A>A_0$, the unique classical solution $n(t,x)$ of Equation \eqref{eq:1.1} is global in time.
\end{theorem}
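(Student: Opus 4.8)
The plan is to reduce the global existence of the classical solution to a single a priori estimate, namely a time-uniform bound on $\sup_{t\ge 0}\|n(t)\|_{L^2}$, and then to produce this bound for $A$ large by exploiting Theorem \ref{thm:1.0}. First I would collect the standard preliminaries. For $n_0\in H^s$ with $s>\frac72$ a contraction/energy argument gives a unique local classical solution; the parabolic maximum principle yields $n\ge 0$, and since $(1-\Delta)c=n\ge 0$ it also gives $c\ge 0$; integrating the first equation of \eqref{eq:1.1} and using $\nabla\cdot\mathbf{u}=0$ gives conservation of mass $\|n(t)\|_{L^1}=\|n_0\|_{L^1}=:M$. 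I would then invoke a continuation criterion stating that the solution extends as long as $\|n(t)\|_{L^2}$ stays finite; this is exactly the point where the supercriticality of the $L^2$ norm in three dimensions is used, as a uniform $L^2$ bound propagates the $H^s$ regularity and rules out blow-up.

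Next I would separate the $\mathbb{T}^2$-average from the fluctuation, writing $n=n^0+n_{\neq}$ and $c=c^0+c_{\neq}$ with $P_0,P_{\neq}$ as in \eqref{eq:1.5}. Because $\mathbf{u}\cdot\nabla$ differentiates only in $x_1,x_2$, the advection kills the zero mode and preserves the fluctuation, so $P_0(\mathbf{u}\cdot\nabla n)=0$ and $P_{\neq}(\mathbf{u}\cdot\nabla n)=\mathbf{u}\cdot\nabla n_{\neq}$. Consequently the zero mode solves the one dimensional problem
\[
\partial_t n^0-\partial_y^2 n^0+\partial_y(n^0\partial_y c^0)+\partial_y P_0(n_{\neq}\partial_y c_{\neq})=0,\qquad (1-\partial_y^2)c^0=n^0,
\]
which is a one dimensional Keller-Segel equation -- globally well posed, with $\|n^0(t)\|_{L^2}$ bounded in terms of $M$ -- perturbed by a source quadratic in the fluctuation. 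This one dimensional reduction of the zero mode is the structural reason one expects global existence for arbitrary mass, with no $8\pi$ threshold, in contrast to the shear case of \cite{Bedrossian.2017}. The fluctuation obeys $\partial_t n_{\neq}+AL_A n_{\neq}+P_{\neq}\nabla\cdot(n\nabla c)=0$, since $AL_A=-\Delta+A\mathbf{u}\cdot\nabla$, so by Theorem \ref{thm:1.0} applied with time $At$ its linear semigroup decays at the physical rate $\lambda:=\epsilon_0 A^{1/2}$.

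The core of the argument is a coupled bootstrap on $[0,T]$: assume $\sup_t\|n^0\|_{L^2}\le K$, $\sup_t\|n_{\neq}\|_{L^2}\le\Gamma$, and an enhanced dissipation budget $\int_0^T\lambda\|n_{\neq}\|_{L^2}^2\,dt\le\beta$, and improve each constant once $A$ is large. For the fluctuation I would use Duhamel against $e^{-sAL_A}P_{\neq}$, supplemented by a companion smoothing estimate -- obtained by interpolating Theorem \ref{thm:1.0} with the parabolic energy estimate for $AL_A$ -- that lets the divergence in $\nabla\cdot(n\nabla c)$ be transferred onto the semigroup while keeping a kernel of the form $(s^{-1/2}+\lambda^{1/2})e^{-\lambda s/2}$, whose time integral is $O(\lambda^{-1/2})=O(A^{-1/4})$. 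The decisive structural fact is that every term of $P_{\neq}\nabla\cdot(n\nabla c)$ carries at least one fluctuation factor, since the pure zero-mode product lies in $\ker P_{\neq}$; thus the forcing is linear in $n_{\neq}$ with coefficients controlled by $K$, plus a quadratic self-interaction, and the $A^{-1/4}$ gain from the kernel renders the whole Duhamel contribution small, returning $\Gamma/2$ and closing the budget $\beta$ for $A$ large. Feeding this smallness into the $L^2$ energy estimate for $n^0$ -- where the coupling source, being quadratic in fluctuations, is absorbed using the one dimensional dissipation and integrates to $O(\beta)$ -- together with the classical one dimensional Keller-Segel bound, improves $K$. Since $\|n\|_{L^2}\le\|n^0\|_{L^2}+\|n_{\neq}\|_{L^2}$ is then uniformly bounded, the continuation criterion yields a global classical solution.

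The step I expect to be the main obstacle is the fluctuation estimate. Theorem \ref{thm:1.0} is only an $L^2\to L^2$ bound, whereas the chemotactic term contains a derivative and the $L^2$ framework is supercritical in three dimensions, so the real work is to upgrade the bare semigroup bound to a smoothing estimate compatible with the enhanced-dissipation decay and to verify that the resulting $A^{-1/4}$ gain genuinely dominates the growth of $\nabla\cdot(n\nabla c)$; controlling $\|n\nabla c\|$ will in turn require auxiliary higher-norm energy estimates propagated alongside the $L^2$ bootstrap, and care is needed because the coefficients $y,y^2$ of $L_A$ are unbounded. A secondary difficulty is the simultaneity of the two estimates: the fluctuation smallness needed to bound $n^0$ and the zero-mode bound needed to control the linear forcing of $n_{\neq}$ must be mutually consistent, which is why the whole scheme must be run as one bootstrap rather than two separate arguments.
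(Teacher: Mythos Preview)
Your overall architecture matches the paper's: local existence with an $L^2$ continuation criterion, the zero/nonzero mode decomposition with the zero mode reducing to a one-dimensional Keller--Segel problem, and a bootstrap closed via Duhamel for $n_{\neq}$ using Theorem~\ref{thm:1.0}. The differences lie in the bootstrap hypotheses and in how the Duhamel estimate is closed. The paper (after the rescaling $t\mapsto At$) bootstraps only on fluctuation quantities, namely the pointwise exponential decay $\|n_{\neq}(t)\|_{L^2}^2\le C_{\neq}e^{-\lambda_A t}\|n_0\|_{L^2}^2$ together with the dissipation budget $A^{-1}\int_s^t\|\nabla n_{\neq}\|_{L^2}^2\,d\tau\le C_{\neq}e^{-\lambda_A s}\|n_0\|_{L^2}^2$; the zero-mode bounds are then \emph{derived} from these hypotheses rather than assumed. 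Your hypotheses $\sup\|n^0\|_{L^2}$, $\sup\|n_{\neq}\|_{L^2}$, $\int\lambda\|n_{\neq}\|_{L^2}^2$ omit the gradient integral, which is exactly the quantity the paper uses repeatedly to control the quadratic coupling terms.

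More substantively, the paper does \emph{not} use a smoothing estimate to move $\nabla\cdot$ onto the semigroup. Instead, under the bootstrap hypotheses it first proves (i) $\|n^0\|_{L^\infty_t L^2_y}\le B_1$ by a one-dimensional energy argument, (ii) $\|n\|_{L^\infty_t L^\infty_x}\le B_2$ by Moser--Alikakos iteration, and (iii) $\|\partial_y n^0\|_{L^\infty_t L^2_y}\le B_3$. With these in hand every term of the forcing $F(c^0,c_{\neq},n^0,n_{\neq})$ in \eqref{eq:4.19} can be estimated directly in $L^2$ and integrated against the raw bound of Theorem~\ref{thm:1.0}; no $H^{-1}\to L^2$ smoothing is needed. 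Your smoothing route is a reasonable alternative, but the obstacle you flag at the end is real: to bound $\|n_{\neq}\nabla c_{\neq}\|_{L^2}$ or $\|n^0\partial_y c_{\neq}\|_{L^2}$ you still need either $\|\nabla n_{\neq}\|_{L^2}$ in $L^2_t$ or $\|n\|_{L^\infty}$ and $\|n^0\|_{L^\infty_y}$, i.e.\ precisely the auxiliary estimates (i)--(iii) above. So the ``auxiliary higher-norm energy estimates'' you leave unspecified are where the paper's actual technical work sits; once those are in place, the smoothing estimate becomes unnecessary.
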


\vskip .05in

It is well known that the shear flow can suppress blow-up for the solution of three dimensional parabolic-elliptic Keller-Segel equation in the case of the initial mass is less than $8\pi$, see \cite{Bedrossian.2017}. Theorem \ref{thm:1.1} tell us that the non-shear flow in \eqref{eq:1.2} can suppress the blow up of three dimensional parabolic-elliptic Keller-Segel equation for any initial mass. The main reason is that the zero mode equation of three dimensional Keller-Segel equation with shear flow is two dimensional equation, while for three dimensional Keller-Segel equation with non shear flow in \eqref{eq:1.2}, we deduce by the Theorem \ref{thm:1.0} that the zero mode equation is one dimension. It is worth noting that the non-shear flow in \eqref{eq:1.2} is a stationary solution to the three dimensional Navier-Stokes equation, thus we will consider the stability of such flows in three dimensional incompressible Navier-Stokes equation and the suppression of blow-up for the three dimensional Keller-Segel-Navier-Stokes equation in forthcoming papers.

\vskip .05in

In the following, we briefly state our main ideas of the proof. Firstly, we need to prove the enhanced dissipation of non-shear flow in \eqref{eq:1.2} by resolvent estimate and semigroup estimate, see Theorem \ref{thm:1.0}. In the proof, we consider a generalized non-shear flow, see \eqref{eq:3.2}, and study the generalized linear operator $\mathcal{L}_A$, see \eqref{eq:3.3}. If denote $e^{-t\mathcal{L}_A}$ is a semigroup with the operator $-\mathcal{L}_A$ as generator, then we obtain semigroup estimate by resolvent estimate and the Gearchart-Pr\"{u}ss type theorem, see Lemma \ref{lem:2.1}. If $u_0=1$ in \eqref{eq:3.2}, the operator $\mathcal{L}_A$ is reduced to the $L_A$, thus, we can finish the proof of Theorem \ref{thm:1.0}. Next, we prove that the non-shear flow in \eqref{eq:1.2} can suppress the blow-up for three dimensional parabolic-elliptic Keller-Segel equation and establish global classical solution, see Theorem \ref{thm:1.1}. Since $L^2$ is supercritical estimate and local existence of solution to Equation \eqref{eq:1.1} is standard, thus we only need to establish global $L^2$ estimate. We decompose Equation \eqref{eq:1.1} into one-dimensional zero mode equation and three-dimensional nonzero mode equation, see \eqref{eq:2.2} and \eqref{eq:2.3}. Since one dimensional Keller-Segel equation is global existence and enhanced dissipation effect of non-shear flow in the three dimensional nonzero mode equations, we can establish global $L^2$ estimate by bootstrap argument.

\vskip .05in

In this paper, we study the enhanced dissipation of non-shear flow in \eqref{eq:1.2} and blow-up suppression in the three dimensional Keller-Segel equation, some proof techniques and ideas are inspired by \cite{Bedrossian.2017} and \cite{Feng.2022}. We know that the resolvent estimate method is widely applied to study the  enhanced dissipation of shear flow. In this paper, we use resolvent estimate method to study the enhanced dissipation of non-shear flow in \eqref{eq:1.2}. In fact, we need to study the linear operator $L_A$ in \eqref{eq:1.4}, our strategy is to transform operator $L_A$ into the shear flow case through transformation and calculation,  which makes the analysis more complex and difficult. In addition, since the enhanced dissipation effect of non-shear flow in \eqref{eq:1.2} occurs in the nonzero mode of $x_1$ or $x_2$, thus we need to study different situations in the analysis, the details can refer to Section \ref{sec.3}. We study that the non-shear flow suppress blow-up of three dimensional Keller-Segel equation by bootstrap argument, where we only consider the nonzero mode equation in the bootstrap argument, see Assumption \ref{assm:2.2} and Proposition \ref{prop:2.3}. And the estimate of Equation \eqref{eq:2.1} and zero mode equation \eqref{eq:2.2} are also needed, see Lemma \ref{lem:4.1}-\ref{lem:4.3}. Some mathematical techniques are used in the proof, such as energy methods, Moser iteration, semigroup estimation and Duhamel's principle, the details can refer to Section \ref{sec.4}.

\vskip .05in

%The paper is organized as follows.
%The rest of this paper is arranged as follows.
%In Section \ref{sec.1}, we introduce some background and our main results.

The rest of this paper is arranged as follows. In Section \ref{sec.2}, we introduce some preparations and give the bootstrap argument. In Section \ref{sec.3}, we prove the enhanced dissipation of non-shear flow in \eqref{eq:1.2}. In Section \ref{sec.4}, we prove  the Theorem \ref{thm:1.1} to establish the global well-posedness of three dimensional Keller-Segel equation with non-shear flow in \eqref{eq:1.2}. In the appendix, we provide necessary supplements to this paper.

\vskip .05in

Throughout the paper, we use the standard notations to denote function spaces and use $C$ to denote a generic constant which may vary from line to line.

\section{Preliminaries and Bootstrap argument}\label{sec.2}
In what follows, we provide some  notations and the auxiliary results, and set up the bootstrap argument. The details are as follows.
\subsection{Notations}
For quantities $X,Y$, if there exist a constant $C$ such that $X\leq CY$, we write $X\lesssim Y$. In this paper, we study the enhanced dissipation of incompressible flow by resolvent estimate, thus we need to introduce some the operator theory. Let $(\mathcal{X},\|\cdot\|)$ be a complex Hilbert space and let $H$ be a closed linear operator in $\mathcal{X}$ with domain $D(H)$. $H$ is m-accretive if the left open half-plane is contained in the resolvent set with
$$
(H+\lambda I)^{-1}\in \mathcal{B}(\mathcal{X})\,,\quad \|(H+\lambda I)^{-1}\|\leq (Re \lambda)^{-1},\ \  Re\lambda >0,
$$
where $\mathcal{B}(\mathcal{X})$ denotes the set of bounded linear operators on $\mathcal{X}$ with operator norm $\|\cdot\|$ and $I$ is the identity operator.

\vskip .05in

We denote $e^{-tH}$ is a  semigroup with $-H$ as generator and define
\begin{equation}\label{eq:2.1.1}
\Psi(H)=\inf\{\|(H-i\lambda I)f\|: f\in D(H), \lambda\in \mathbb{R}, \|f\|=1 \}.
\end{equation}
The following result is the Gearchart-Pr\"{u}ss type theorem for m-accretive operators, see \cite{Wei.2021}.
\begin{lemma}\label{lem:2.1}
Let $H$ be an m-accretive operator in a Hilbert space $\mathcal{X}$. Then for any $t\geq 0$, we have
$$
\|e^{-tH}\|_{L^2\rightarrow L^2}\leq e^{-t\Psi(H)+\pi/2},
$$
where $\Psi(H)$ is defined in \eqref{eq:2.1.1}.
\end{lemma}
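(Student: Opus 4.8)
The plan is to prove the quantitative Gearhart--Pr\"{u}ss bound in three stages: reduce the claim to a resolvent estimate on the imaginary axis, promote that estimate to semigroup decay by a Plancherel/Paley--Wiener argument, and extract the explicit constant $\pi/2$ from an optimal time cutoff. First I would dispose of trivial cases. Since $H$ is m-accretive, by the Lumer--Phillips theorem $-H$ generates a contraction semigroup, so $\|e^{-tH}\|_{L^2\to L^2}\leq 1$ for all $t\geq 0$; in particular, if $\Psi(H)=0$ the asserted bound is immediate because $e^{\pi/2}>1$. Hence I may assume $\psi:=\Psi(H)>0$.

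The second step is the resolvent bound $\sup_{\lambda\in\mathbb{R}}\|(H-i\lambda I)^{-1}\|\leq 1/\psi$. By the definition \eqref{eq:2.1.1}, for every $\lambda\in\mathbb{R}$ and $f\in D(H)$ one has $\|(H-i\lambda I)f\|\geq \psi\|f\|$, so $H-i\lambda I$ is injective with closed range and its inverse on that range has norm at most $1/\psi$. To get surjectivity I use m-accretivity: since $\mathrm{Re}(\epsilon-i\lambda)=\epsilon>0$, the operator $(H+\epsilon I-i\lambda I)^{-1}$ lies in $\mathcal{B}(\mathcal{X})$ for every $\epsilon>0$, and the lower bound gives $\|(H+\epsilon I-i\lambda I)^{-1}\|\leq(\psi-\epsilon)^{-1}$ for $0<\epsilon<\psi$. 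The resolvent identity shows this family is Cauchy as $\epsilon\to 0^{+}$, and its limit is a bounded two-sided inverse of $H-i\lambda I$ with norm $\leq 1/\psi$. This identifies $\Psi(H)$ as a uniform bound for the resolvent along the whole imaginary axis.

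The third and decisive step converts this into decay. Fix $u_0\in D(H)$ and set $u(t)=e^{-tH}u_0$; by contractivity $t\mapsto\|u(t)\|$ is nonincreasing. The Laplace transform $\int_0^\infty e^{-st}u(t)\,dt=(H+sI)^{-1}u_0$, valid for $\mathrm{Re}\,s>0$, continues to the imaginary axis with the bound from Step two. I would introduce a smooth temporal cutoff $\chi$ adapted to an interval $[0,T]$, use $\partial_t(\chi u)+H(\chi u)=\chi' u$, pass to the Fourier variable $\eta$, and estimate $\|\widehat{\chi u}(\eta)\|=\|(H+i\eta I)^{-1}\widehat{\chi' u}(\eta)\|\leq\psi^{-1}\|\widehat{\chi' u}(\eta)\|$; Plancherel then yields $\psi^2\int\chi^2\|u\|^2\,dt\leq\int|\chi'|^2\|u\|^2\,dt$. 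Feeding in the monotonicity of $\|u\|$ and optimizing $\chi$ converts this averaged inequality into the pointwise bound $\|u(T)\|\leq e^{\pi/2-\psi T}\|u_0\|$; density of $D(H)$ finishes the proof. The constant $\pi/2$ is the square root of the principal eigenvalue $(\pi/2)^2$ of the extremal mixed Dirichlet--Neumann problem $-\chi''=\mu\chi$ on the unit interval, which governs the optimal cutoff.

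The hard part is precisely this last conversion. For non-normal $H$ the imaginary-axis bound of Step two gives \emph{no} resolvent control in the open strip $\{-\psi<\mathrm{Re}\,s<0\}$, so one cannot simply shift the Bromwich contour to read off the rate $e^{-\psi t}$; the decay one obtains is only an averaged statement, which is exactly why a bare rate $e^{-\psi t}$ fails and a universal prefactor $e^{\pi/2}$ must appear. The technical heart is therefore to handle the endpoint/boundary contributions of $\chi$ so that $\|u(T)\|$ itself, rather than a mere time average, is controlled while keeping the constant sharp; the contraction monotonicity of $\|u(t)\|$ established in Step one is what makes this upgrade possible.
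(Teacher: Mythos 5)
Your overall architecture is the right one, and it matches the source: the paper does not prove Lemma \ref{lem:2.1} at all, but quotes it as the Gearhart--Pr\"uss type theorem of \cite{Wei.2021}, whose proof runs through exactly your scheme --- contractivity of $e^{-tH}$ from m-accretivity, the uniform resolvent bound $\|(H-i\lambda I)^{-1}\|\leq \Psi(H)^{-1}$ along the imaginary axis, and the identity $\partial_t(\chi u)+H(\chi u)=\chi' u$ combined with the vector-valued Plancherel theorem (legitimately available since $\mathcal{X}$ is a Hilbert space, which is precisely why this is a Hilbert-space theorem) to obtain $\Psi(H)^2\int\chi^2\|u\|^2\,dt\leq\int|\chi'|^2\|u\|^2\,dt$. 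Your first two steps are complete and correct; in particular the surjectivity argument for $H-i\lambda I$ via the shifted inverses $(H+\epsilon I-i\lambda I)^{-1}$ with bound $(\psi-\epsilon)^{-1}$ and the resolvent identity as $\epsilon\to 0^{+}$ is sound.

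The genuine gap is the step you defer. For the class of cutoffs you actually describe (a cutoff ``adapted to an interval $[0,T]$'', i.e.\ a plateau between two ramps of height at most one), the averaged inequality plus monotonicity of $g(t):=\|u(t)\|$ provably does \emph{not} yield the claimed bound: a ramp of width $\delta$ gives only $\psi^2(T-\delta)\,g(T)^2\leq C\delta^{-1}\bigl(g(0)^2+g(T)^2\bigr)$, hence $g(T)\lesssim g(0)/\sqrt{\psi T}$, an algebraic decay; and upgrading this by iterating a one-step contraction through the semigroup property caps the exponential rate at a small fixed fraction of $\psi$ (optimizing the window length gives roughly $\psi/(2e)$), never the sharp rate $\psi$. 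The missing idea is that the test function must itself carry the exponential weight: take, say, $\chi(t)=e^{\pi/2}\sin(\psi t)$ on the quarter period $[0,\pi/(2\psi)]$, continued by $\chi(t)=e^{\psi t}$ on $[\pi/(2\psi),T]$ --- so that $|\chi'|^2-\psi^2\chi^2\equiv 0$ on the plateau --- and by $e^{\psi T}e^{(\psi-\gamma)(t-T)}$ with $\psi<\gamma<2\psi$ beyond $T$. Then the entire cost localizes in the ramp, where the sign structure of $\cos(2\psi t)$ and the monotonicity of $g$ bound it by $\tfrac{1}{2}e^{\pi}\psi\,g(0)^2$, the tail term enters with the favorable coefficient $\gamma(2\psi-\gamma)>0$, and one reads off $g(T)\leq Ce^{-\psi T}g(0)$ at the sharp rate, with the prefactor $e^{\pi/2}$ emerging from the ramp amplitude $e^{\psi\cdot\pi/(2\psi)}$ after optimization. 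Your mixed Dirichlet--Neumann eigenvalue $(\pi/2)^2$ correctly identifies the quarter-period window, but without the exponential weighting the plan as written stalls at an averaged, sub-optimal-rate estimate, so as it stands the decisive Step three is not a proof.
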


\subsection{Some preparations for equations}
We study the enhanced dissipation effect of Equation \eqref{eq:1.1} by the operator $L_A$ in \eqref{eq:1.4} and Theorem \ref{thm:1.0}, thus we modify the Equation \eqref{eq:1.1} by time scale. If we take $t=A\tau$ and the ${\bf u}$ is non-shear incompressible flow in \eqref{eq:1.2}, then the Equation (\ref{eq:1.1}) can be changed to
\begin{equation}\label{eq:2.1}
\begin{cases}
\partial_tn+y \partial_{x_1}n+ y^2 \partial_{x_2}n-\frac{1}{A}\Delta n+\frac{1}{A}\nabla\cdot(n \nabla c)=0,\\
-\Delta c=n-c, \\
n(0,x_1,x_2,y)=n_0(x_1,x_2,y),\ \ \ (x_1,x_2,y)\in \mathbb{T}^2\times \mathbb{R}.
\end{cases}
\end{equation}
%where $\Omega=\mathbb{T}^2\times \mathbb{R}$.
Since the equivalence of Equations \eqref{eq:1.1} and \eqref{eq:2.1} by time scale transformation, thus we only consider the Equations \eqref{eq:2.1} and establish the global classical solution in this paper.

\vskip .05in

The local well-posedness and the $L^2$-criterion of solution to Equation \eqref{eq:2.1} are established.  It is as follows.

\begin{proposition}\label{prop:2.1}
Let initial data $n_0\geq0, n_0(x_1,x_2,y)\in H^s(\mathbb{T}^2\times \mathbb{R}), s>\frac{7}{2}$, there exists a time $T_\ast=T(n_0)>0$ such that the non-negative solution of Equation \eqref{eq:2.1}
$$
n(t,x_1,x_2,y)\in C([0,T_\ast], H^s(\mathbb{T}^2\times \mathbb{R})).
$$
Moreover, if for a given $T$, the solution verifies the following bound
$$
\lim_{t\rightarrow T}\sup_{0\leq \tau\leq t}\|n(\tau,\cdot)\|_{L^2}<\infty,
$$
then it may be extended up to time $T+\delta$ for small enough $\delta>0$. Furthermore, if $n_0\in L^1(\mathbb{T}^2\times \mathbb{R})$, then the $L^1$ norm of the solution to Equation \eqref{eq:2.1} is preserved for all time, namely
$$
M=\|n\|_{L^1}=\|n_0\|_{L^1}.
$$
\end{proposition}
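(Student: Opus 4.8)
The plan is to establish Proposition~\ref{prop:2.1} in three separate pieces, following the standard local theory for parabolic chemotaxis systems, since the statement bundles together local existence, a blow-up criterion (the $L^2$-criterion), and mass conservation. For the local existence part, I would first observe that the second equation $-\Delta c = n-c$ can be solved explicitly via $c = (I-\Delta)^{-1}n$, so that $\nabla c = \nabla(I-\Delta)^{-1}n$ defines a smoothing operator of order $-1$. This reduces \eqref{eq:2.1} to a single nonlocal parabolic equation for $n$ of the form $\partial_t n + y\partial_{x_1}n + y^2\partial_{x_2}n - \tfrac{1}{A}\Delta n = -\tfrac{1}{A}\nabla\cdot(n\,\nabla(I-\Delta)^{-1}n)$. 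I would set this up as a fixed-point problem in $C([0,T_\ast],H^s)$ with $s>7/2$: write the mild formulation using the semigroup generated by $-\tfrac1A\Delta + y\partial_{x_1}+y^2\partial_{x_2}$, and run a contraction mapping argument. The choice $s>7/2$ is dictated by the need for $H^s(\mathbb{T}^2\times\mathbb{R})$ to be a Banach algebra embedded in $W^{1,\infty}$ in three dimensions (one needs $s > 3/2 + 1 = 5/2$ for the algebra/$C^1$ property, and the extra margin accommodates the transport terms with unbounded coefficients $y, y^2$).

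The subtle point in the local theory, and what I expect to be the main obstacle, is that the drift coefficients $y$ and $y^2$ are \emph{unbounded} on $\mathbb{R}$, so the transport operator $y\partial_{x_1}+y^2\partial_{x_2}$ is not a bounded perturbation of the Laplacian and the associated semigroup does not act nicely on plain $H^s$ without tracking weighted norms. To handle this I would either (i) work in weighted Sobolev spaces that control polynomial growth in $y$, commuting the weight through the equation and absorbing the commutators generated against the $\tfrac1A\Delta$ dissipation, or (ii) exploit the explicit solvability of the linear transport--diffusion flow (the characteristics for $y\partial_{x_1}+y^2\partial_{x_2}$ are explicit since $\dot y = 0$ along them, so $y$ is constant on characteristics) to build the linear propagator by hand and obtain the requisite $H^s$ estimates with constants depending on $T_\ast$ and $\|n_0\|_{H^s}$. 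The positivity preservation $n\ge 0$ would then follow from the maximum principle, using that the reaction term $-\nabla\cdot(n\nabla c)$ together with $-\Delta c = n-c$ keeps the zero set invariant in the standard Keller--Segel fashion.

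For the continuation/blow-up criterion, I would argue by the usual bootstrap: assume $\sup_{0\le\tau\le t}\|n(\tau)\|_{L^2}\le M_0<\infty$ up to time $T$, and show this controls the full $H^s$ norm, so the solution does not lose regularity and can be restarted. The engine here is parabolic smoothing combined with the quadratic structure of the nonlinearity: using the smoothing of $(I-\Delta)^{-1}$ one estimates $\|n\nabla c\|$ in terms of $\|n\|_{L^2}$ and lower norms, then runs an $H^s$ energy estimate in which every nonlinear contribution is dominated by the dissipation after invoking the $L^2$ bound and interpolation. One must again carry the unbounded coefficients $y,y^2$ through the energy estimate; integrating by parts, the incompressibility $\nabla\cdot{\bf u}=0$ kills the principal transport contribution, and the commutators $[\,y\partial_{x_1},\partial^\alpha\,]$ and $[\,y^2\partial_{x_2},\partial^\alpha\,]$ produce lower-order terms (of order $|\alpha|$ in the derivatives that land on the coefficients, which are at most quadratic in $y$) that are absorbed after pairing against the weighted dissipation. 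This is where the weighted framework from the local theory pays off and where the bulk of the technical care is required.

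Finally, mass conservation is the easiest part: assuming $n_0\in L^1$, I would integrate the first equation of \eqref{eq:2.1} over $\mathbb{T}^2\times\mathbb{R}$. The diffusion term $\tfrac1A\Delta n$ and the chemotactic flux $\tfrac1A\nabla\cdot(n\nabla c)$ integrate to zero as pure divergences, and the transport term $y\partial_{x_1}n+y^2\partial_{x_2}n = \nabla\cdot({\bf u}\,n)$ with ${\bf u}=(y,y^2,0)$ divergence-free also integrates to zero; hence $\ddt\int n\,dx = 0$, giving $\|n(t)\|_{L^1}=\|n_0\|_{L^1}$, where the nonnegativity of $n$ identifies $\int n$ with $\|n\|_{L^1}$. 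I would note that the decay at $|y|\to\infty$ needed to justify the boundary terms follows from the weighted regularity established above. These three components together furnish the full statement of the proposition.
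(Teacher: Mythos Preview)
Your outline is substantially more detailed than the paper's own treatment. The paper does \emph{not} prove Proposition~\ref{prop:2.1}: immediately after the statement it writes ``The proofs of local well-posedness and $L^1$ norm conservation is standard method and the proof continuation criterion can refer to \cite{Hopf.2018,Kiselev.2016}.'' In other words, the authors declare the result standard and defer to the cited references for the $L^2$--criterion, offering no argument of their own.

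Your three-part sketch (mild formulation plus contraction for local existence, $L^2$-to-$H^s$ bootstrap for the continuation criterion, divergence structure for mass conservation) is exactly the standard route those references follow, so in spirit you are aligned with what the paper intends. Two remarks are worth making. First, you correctly flag the unbounded drift $y\partial_{x_1}+y^2\partial_{x_2}$ as the only genuinely non-routine point; the paper is silent on this, and the references it cites in fact treat bounded shear profiles, so your proposed fixes (weighted spaces, or the explicit propagator using that $y$ is constant along characteristics) are the right way to close the gap the paper leaves open. Second, your justification for $s>7/2$ via commutators with the unbounded coefficients is plausible but not spelled out in the paper either; the paper simply imposes $s>7/2$ without comment. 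So nothing in your plan conflicts with the paper --- you have simply supplied the argument the authors chose to omit.
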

The Proposition \ref{prop:2.1} tell us that to get the classical solution of Equation \eqref{eq:2.1}, only need to have certain control of spatial $L^2$ norm of the solution. The proofs of local well-posedness and $L^1$ norm conservation is standard method and the proof continuation criterion can refer to \cite{Hopf.2018,Kiselev.2016}.

\subsection{Bootstrap argument}We know that the enhanced dissipation of non-shear flow in \eqref{eq:1.2} occurs in the part where the $x_1$ and $x_2$ directions are not all zero frequency. Similar to paper \cite{Feng.2022,He.2018}, denote
$$
n^{0}=P_0n,\ \  n_{\neq}=P_{\neq}n,\ \ c^{0}=P_0c,\ \  c_{\neq}=P_{\neq}c,
$$
where the $P_0$ and $P_{\neq}$ are defined in \eqref{eq:1.5}. Decompose the solution of Equation \eqref{eq:2.1} into $x_1,x_2$-independent part and $x_1,x_2$-dependent part. Then we obtain the one dimensional zero mode equation
\begin{equation}\label{eq:2.2}
\begin{cases}
\partial_tn^{0}-\frac{1}{A}\partial_{yy}n^{0}+\frac{1}{A}\partial_y(n^{0} \partial_y c^0)+\frac{1}{A}\left(\nabla\cdot(n_{\neq} \nabla c_{\neq})\right)^{0}=0,\\
-\partial_{yy} c^0=n^0-c^0,
\end{cases}
\end{equation}
and three dimensional nonzero mode equation
\begin{equation}\label{eq:2.3}
\begin{cases}
\partial_tn_{\neq}+ y \partial_{x_1}n_{\neq}+ y^2 \partial_{x_2}n_{\neq}-\frac{1}{A}\Delta n_{\neq}+\frac{1}{A}\nabla n^{0}\cdot\nabla c_{\neq}+\frac{1}{A}\nabla n_{\neq}\cdot\nabla c^{0}\\
\ \ \ \ \ \ \ \ \ \ \ \ \ \ \ \ \ \  \ \  +\frac{1}{A}\left(\nabla\cdot(n_{\neq} \nabla c_{\neq})\right)_{\neq}-\frac{1}{A}n^{0}(n_{\neq}-c_{\neq})-\frac{1}{A} n_{\neq}(n^0-c^0)=0,\\
-\Delta c_{\neq}=n_{\neq}-c_{\neq}.
\end{cases}
\end{equation}

\vskip .05in

In this paper, we establish the global well-posedness based on the standard bootstrap argument. We list the bootstrap assumptions as below.

\begin{assumption}\label{assm:2.2}
Let $n_{\neq}$ be the solution of Equation \eqref{eq:2.3} with $n_0$ and the constants $C_{\neq}$ is positive constant, which are determined by the proof. Define $T^\ast=T(n_0)>0$ to be the maximum time such the following assumption hold,
\begin{itemize}
\item [(A-1)] Nonzero mode $L^2\dot{H}^1$ estimate: for any $0\leq s\leq t\leq T^\ast$
$$
\frac{1}{A}\int_{s}^{t}\|\nabla n_{\neq}\|^2_{L^2}d\tau\leq 16C_{\neq}e^{-\lambda_A s}\|n_{0}\|^2_{L^2},
$$
\item [(A-2)]Nonzero mode enhanced dissipation estimate: for any $0\leq t\leq T^\ast$
$$
\|n_{\neq}(t)\|^2_{L^2}\leq 4C_{\neq}e^{-\lambda_A t}\|n_{0}\|^2_{L^2},
$$
\end{itemize}
where $\lambda_A$ is defined in \eqref{eq:1.6}.
\end{assumption}

We aim to show $T^\ast=\infty$, this is achieved through the bootstrap argument. To be specific, we will prove the following refined estimates hold on $[0, T^\ast]$ by choosing proper $A$.

\begin{proposition}\label{prop:2.3}
Let $n_{\neq}$ be the solution of Equation \eqref{eq:2.3} with $n_0$ and satisfy the Assumption {\rm \ref{assm:2.2}}. Then there exist $A_0=A(n_0)$, such that $A>A_0$, one has
\begin{itemize}
\item [(P-1)] Nonzero mode $L^2\dot{H}^1$ estimate: for any $0\leq s\leq t\leq T^\ast$
$$
\frac{1}{A}\int_{s}^{t}\|\nabla n_{\neq}\|^2_{L^2}d\tau\leq 8C_{\neq}e^{-\lambda_A s}\|n_{0}\|^2_{L^2},
$$
\item [(P-2)]Nonzero mode enhanced dissipation estimate: for any $0\leq t\leq T^\ast$
$$
\|n_{\neq}(t)\|^2_{L^2}\leq 2C_{\neq}e^{-\lambda_A t}\|n_{0}\|^2_{L^2},
$$
\end{itemize}
where $\lambda_A$ is defined in \eqref{eq:1.6}.
\end{proposition}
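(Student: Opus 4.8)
The plan is to run a standard continuity (bootstrap) argument: assuming that the bounds (A-1)--(A-2) of Assumption \ref{assm:2.2} hold on $[0,T^\ast]$, I will establish the strictly improved bounds (P-1)--(P-2) on the same interval, with the constants halved. Since both families of inequalities are continuous in $t$ and the improved constants are strict, a routine continuity argument forces $T^\ast=\infty$; combined with the $L^2$ control of $n^0$ coming from the one-dimensional zero-mode estimates of Section \ref{sec.4} and the continuation criterion of Proposition \ref{prop:2.1}, this yields the global classical solution. Throughout, the mechanism that makes the argument close is that every nonlinear contribution to the $n_{\neq}$-equation \eqref{eq:2.3} carries a prefactor $\tfrac1A$, whereas the enhanced dissipation rate is only $\lambda_A=\epsilon_0A^{-1/2}$; time integration against the decaying factor $e^{-\lambda_A\tau}$ produces a gain of $\tfrac{1}{A\lambda_A}\sim A^{-1/2}$, which is small for $A$ large.

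To prove (P-1) I would perform the direct $L^2$ energy estimate on \eqref{eq:2.3}. Testing against $n_{\neq}$, the transport operator $y\partial_{x_1}+y^2\partial_{x_2}$ is skew-symmetric and drops out, leaving
\begin{equation*}
\tfrac12\tfrac{d}{dt}\|n_{\neq}\|_{L^2}^2+\tfrac1A\|\nabla n_{\neq}\|_{L^2}^2=\text{(nonlinear terms)}.
\end{equation*}
I would then integrate in $\tau$ over $[s,t]$ and bound the nonlinear terms in two groups. The terms linear in the nonzero mode, whose coefficients are the zero modes $n^0,c^0$, are controlled by the one-dimensional estimates of Section \ref{sec.4} together with the elliptic bound $\|c_{\neq}\|_{H^2}\lesssim\|n_{\neq}\|_{L^2}$ coming from $-\Delta c_{\neq}=n_{\neq}-c_{\neq}$; the quadratic divergence term is handled by integrating by parts, substituting the elliptic relation, and applying Gagliardo--Nirenberg to the resulting cubic expression ($\|n_{\neq}\|_{L^3}^3\lesssim\|n_{\neq}\|_{L^2}^{3/2}\|\nabla n_{\neq}\|_{L^2}^{3/2}$), after which Young's inequality absorbs a small multiple of $\tfrac1A\|\nabla n_{\neq}\|_{L^2}^2$ into the dissipation. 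Using (A-2) for $\|n_{\neq}(s)\|_{L^2}^2$ and the $\tfrac{1}{A\lambda_A}$ gain, all error terms are bounded by a small multiple of $C_{\neq}e^{-\lambda_A s}\|n_0\|_{L^2}^2$, improving the constant from $16$ to $8$ once $A>A_0(n_0)$ (the threshold depends on $\|n_0\|$ because higher powers of $\|n_0\|$ enter through the cubic term).

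For (P-2) the plain energy estimate is insufficient, since the parabolic dissipation $\tfrac1A\|\nabla n_{\neq}\|_{L^2}^2$ only yields decay at rate $A^{-1}\ll\lambda_A$; the enhanced rate must be imported from the linear theory. I would therefore write \eqref{eq:2.3} as $\partial_tn_{\neq}+L_An_{\neq}=F_{\neq}$, with $F_{\neq}$ the collection of nonlinear terms, and apply Duhamel's formula together with Theorem \ref{thm:1.0}:
\begin{equation*}
\|n_{\neq}(t)\|_{L^2}\le e^{\pi/2}e^{-\lambda_At}\|P_{\neq}n_0\|_{L^2}+e^{\pi/2}\int_0^t e^{-\lambda_A(t-\tau)}\|F_{\neq}(\tau)\|_{L^2}\,d\tau.
\end{equation*}
The linear term already supplies the target decay. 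For the forcing, the zero-mode and elliptic bounds together with (A-2) give $\|F_{\neq}(\tau)\|_{L^2}\lesssim\tfrac1A\,e^{-\lambda_A\tau/2}\|n_0\|_{L^2}$ for the lower-order terms, and the convolution $\int_0^t e^{-\lambda_A(t-\tau)}e^{-\lambda_A\tau/2}\,d\tau\le\tfrac{2}{\lambda_A}e^{-\lambda_At/2}$ reproduces the decay $e^{-\lambda_At/2}$ with prefactor $\tfrac{1}{A\lambda_A}\sim A^{-1/2}$. Squaring, and fixing $C_{\neq}$ large enough to absorb the constant $e^{\pi/2}$, this improves the constant in (A-2) from $4$ to $2$.

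The main obstacle I anticipate is the quadratic divergence term $\tfrac1A\bigl(\nabla\cdot(n_{\neq}\nabla c_{\neq})\bigr)_{\neq}$ inside the Duhamel estimate for (P-2): the semigroup bound of Theorem \ref{thm:1.0} is purely $L^2\to L^2$ and gains no derivative, so estimating this term directly in $L^2$ would require $\|\nabla n_{\neq}\|_{L^2}$ and $L^\infty$-type control of $\nabla c_{\neq}$ that are not furnished by (A-1)--(A-2) alone in three dimensions. I expect to resolve this by keeping the derivative on the product and invoking the $L^2\dot H^1$ bound (A-1) through Cauchy--Schwarz in time, trading the missing $L^\infty$ control for the dissipation integral, with the $\tfrac1A$ prefactor again providing the decisive smallness; matching this quadratic contribution to the exact $e^{-\lambda_At}$ decay through the time convolution is the most delicate point of the argument.
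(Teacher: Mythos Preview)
Your overall plan matches the paper: (P-1) via an $L^2$ energy estimate on \eqref{eq:2.3} with the transport term vanishing, (P-2) via Duhamel against $e^{-tL_A}$ and Theorem~\ref{thm:1.0}, with every nonlinear contribution gaining a factor $\sim A^{-1/2}$ after time integration. Two points where the paper's execution differs from your sketch are worth flagging.

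First, for (P-2) the paper does not run Duhamel directly on $[0,t]$; it time-steps on intervals $[s,s+\tau^\ast]$ with $\tau^\ast=8\lambda_A^{-1}$, bounding the semigroup factor by a constant on each finite interval so that H\"older in time against (A-1) applies without carrying the exponential weight through the integral. This produces a recursion $\|n_{\neq}(s+\tau^\ast)\|_{L^2}\le\tfrac12e^{-4}\bigl(\|n_{\neq}(s)\|_{L^2}+e^{-\lambda_A s/2}\|n_0\|_{L^2}\bigr)$, and induction over steps recovers the full decay. This is exactly how the paper resolves the convolution-matching issue you identified as the delicate point; your direct-Duhamel route would require a splitting of $[0,t]$ (say at $t/2$) to extract the decay from (A-1), which works but is less clean.

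Second, your reference to the ``one-dimensional estimates of Section~\ref{sec.4}'' underplays a substantial auxiliary input: the paper needs a uniform $L^\infty$ bound on the full three-dimensional solution $n$ (Lemma~\ref{lem:4.2}, via Moser--Alikakos iteration). This $L^\infty$ control is what allows the $\dot H^1_y$ bound on $n^0$ (Lemma~\ref{lem:4.3}) to close, and that $\dot H^1_y$ bound is required for the $\nabla n^0\cdot\nabla c_{\neq}$ term in both (P-1) and (P-2). The $L^\infty$ bound also enters directly in the Duhamel estimate for (P-2), through a term $\|n_{\neq}\|_{L^\infty}\|n_{\neq}\|_{L^2}$ in the bound for $\|(\nabla\cdot(n_{\neq}\nabla c_{\neq}))_{\neq}\|_{L^2}$. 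The chain Lemma~\ref{lem:4.1} $\to$ Lemma~\ref{lem:4.2} $\to$ Lemma~\ref{lem:4.3} is not purely one-dimensional and is where most of the work outside the bootstrap actually lives.
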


We know that the time $T^\ast$ in Assumption \ref{assm:2.2} is large than $8\lambda^{-1}_{A}$ by local estimate, the details can see Appendix. The $L^\infty L_y^2$ estimate of $n^0$, $L^\infty L^\infty$ estimate of $n$ and $L^\infty \dot{H}^1$ estimate of $n^0$ can be obtained by the Assumption \ref{assm:2.2}, see Section \ref{sec.4}. Combining the Assumption \ref{assm:2.2} and Proposition \ref{prop:2.3}, we imply that the $T^\ast$ is infinity by bootstrap argument, then the global $L^2$ estimate of $n$ is established. Based on the local solution and uniform $L^2$ estimate of $n$, the global classical solution of Equation \eqref{eq:2.1} can be established by Proposition \ref{prop:2.1}. Since the equivalence of Equations \eqref{eq:1.1} and \eqref{eq:2.1} by time scale transformation, Then we can finish the proof of Theorem \ref{thm:1.1}. In this paper, the Proposition \ref{prop:2.3} is the most important and we can prove it by enhanced dissipation effect of non-shear flow in \eqref{eq:1.2}, the details of proof can be seen in Section \ref{sec.4}.

\vskip .1in

\section{Enhanced dissipation of non-shear incompressible flow}\label{sec.3}

In this section, we study the enhanced dissipation effect of non-shear flow in \eqref{eq:1.2} and finish the proof of Theorem \ref{thm:1.0}. We consider the linear advection diffusion equation
\begin{equation}\label{eq:3.1}
\partial_t f +{\bf u}\cdot\nabla f -A^{-1}\Delta f=0,\ \ \ f(0,x)=f_0(x),
\end{equation}
where consider  the generalized non-shear incompressible flow
\begin{equation}\label{eq:3.2}
{\bf u}=(u_0(y)y,u_0(y)y^2,0).
\end{equation}
Similar to \eqref{eq:1.3} and \eqref{eq:1.4}, we know that the key point of enhanced dissipation to Equation \eqref{eq:3.1} is to study the linear operator
\begin{equation}\label{eq:3.3}
\mathcal{L}_A=-A^{-1}\Delta+u_0(y)y\partial_{x_1}+u_0(y)y^2\partial_{x_2}.
\end{equation}
%To study the enhanced dissipation of ${\bf u}$ in \eqref{eq:3.2},
For the non-shear flow in \eqref{eq:3.2}, we give the following assumption on $u_0(y)$, which is inspired by  the previous work of one of the authors in \cite{Feng.2022}.
\begin{assumption}\label{ass:3.1}
Let $l$ be a positive integer, there exist $m, N\in\mathbb{N},c_1>0,\delta_0>0$ with the property that, for
any $\lambda , \alpha_1,  \alpha_2\in \mathbb{R}$ and $\delta\in(0,\delta_0)$, there exist finitely many  points $y_1,\ldots y_n\in \mathbb{R}$ with $n\leq N$, such that
$$
\left|u_0(y)\left((y+\alpha_1)^l+\alpha_2\right)-\lambda\right|\geq c_1 \delta^{m+l}, \ \ \ \forall \  |y-y_j|\geq \delta,  \ \ \ \forall j\in \{1,\ldots n\}.
$$
\end{assumption}

\vskip .05in

If denote $e^{-t\mathcal{L}_A}$ is a semigroup with the operator $-\mathcal{L}_A$ as generator and based on such an assumption on $u_0(y)$, we state a proposition as follows.
\begin{proposition}\label{prop:3.2}
Let $u_0(y)$ satisfy Assumption {\rm \ref{ass:3.1}}, then there exist a positive constant $\epsilon_0$ is independent of $A$, such that for any $t\geq 0$, one has
$$
\|e^{-t\mathcal{L}_A}P_{\neq}\|_{L^2\rightarrow L^2}\leq e^{-\lambda'_At+\pi/2},\ \ \ \ \lambda'_A=\epsilon_0 A^{-\frac{m+2}{m+4}},
$$
where $P_{\neq}$ is defined in~\eqref{eq:1.5} and the operator $\mathcal{L}_A$ is defined in \eqref{eq:3.3}.
\end{proposition}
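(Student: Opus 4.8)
The plan is to deduce Proposition \ref{prop:3.2} from the Gearhart--Pr\"uss type theorem, Lemma \ref{lem:2.1}. Thus I would first check that $\mathcal{L}_A$, restricted to the range of $P_{\neq}$, is m-accretive: since the drift $(u_0(y)y, u_0(y)y^2, 0)$ is divergence free, the transport part is skew-adjoint and $\mathrm{Re}\langle \mathcal{L}_A f, f\rangle = A^{-1}\|\nabla f\|_{L^2}^2 \ge 0$, so accretivity is immediate and m-accretivity follows from the Lumer--Phillips theorem together with the ellipticity of $-A^{-1}\Delta$. By Lemma \ref{lem:2.1} it then remains to prove the lower bound $\|(\mathcal{L}_A - i\lambda)f\|_{L^2} \gtrsim A^{-(m+2)/(m+4)}\|f\|_{L^2}$ uniformly in $\lambda \in \mathbb{R}$ and $f \in \mathrm{Range}(P_{\neq})$, which is exactly $\Psi \ge \epsilon_0 A^{-(m+2)/(m+4)}$ for the restricted operator.

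To exploit the structure I would take the Fourier transform in $(x_1,x_2)$, which decouples the problem into the one-dimensional operators on $L^2(\mathbb{R}_y)$
\begin{equation*}
\mathcal{L}_{A,k_1,k_2} = A^{-1}\big(k_1^2+k_2^2-\partial_{yy}\big) + i\,W_{k_1,k_2}(y), \qquad W_{k_1,k_2}(y)=u_0(y)\big(k_1 y + k_2 y^2\big),
\end{equation*}
indexed by $(k_1,k_2)\in\mathbb{Z}^2\setminus\{0\}$, and it suffices to bound $\Psi$ from below uniformly in $(k_1,k_2)$. Here I would split into the two cases anticipated in the introduction. When $k_2\neq0$, completing the square gives $W_{k_1,k_2}=k_2\,u_0(y)\big((y+\alpha_1)^2+\alpha_2\big)$ with $\alpha_1=k_1/(2k_2)$, $\alpha_2=-k_1^2/(4k_2^2)$, so Assumption \ref{ass:3.1} applies with $l=2$; when $k_2=0$ and $k_1\neq0$ one has $W_{k_1,0}=k_1\,u_0(y)\,y$, matching Assumption \ref{ass:3.1} with $l=1$, $\alpha_1=\alpha_2=0$. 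In either case, since $|k_1|,|k_2|\ge1$ for nonzero integers, Assumption \ref{ass:3.1} yields, for each $\delta\in(0,\delta_0)$, a good set $G_\delta$ --- the complement of at most $N$ intervals of length $2\delta$ around the points $y_j$ --- on which $|W_{k_1,k_2}(y)-\lambda|\ge c_1\delta^{m+2}$ (the exponent $2$ being the worse of the two cases).

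On $G_\delta$ this gives $\int_{G_\delta}|f|^2\le (c_1\delta^{m+2})^{-2}\,\|(W_{k_1,k_2}-\lambda)f\|_{L^2}^2$, while on the bad set I would use that it has measure $\le 2N\delta$ together with the one-dimensional Gagliardo--Nirenberg bound $\|f\|_{L^\infty}^2\lesssim\|f\|_{L^2}\|f'\|_{L^2}$ to get $\int_{\text{bad}}|f|^2\lesssim N\delta\,\|f\|_{L^2}\|f'\|_{L^2}$. Writing $g=(\mathcal{L}_{A,k_1,k_2}-i\lambda)f$, the real part of $\langle g,f\rangle$ controls $A^{-1}\|f'\|_{L^2}^2\le\|g\|_{L^2}\|f\|_{L^2}$, and the imaginary structure controls $\|(W_{k_1,k_2}-\lambda)f\|_{L^2}$ in terms of $\|g\|_{L^2}$. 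Adding the two contributions and normalizing $\|f\|_{L^2}=1$ leads to an inequality of the shape $1\lesssim \delta^{-2(m+2)}\|g\|_{L^2}^2+N\delta A^{1/2}\|g\|_{L^2}^{1/2}$; choosing $\delta=A^{-1/(m+4)}$ balances the two terms and forces $\|g\|_{L^2}\gtrsim\epsilon_0 A^{-(m+2)/(m+4)}$ once $\epsilon_0$ is small, which is exactly the claimed rate. Applying Lemma \ref{lem:2.1} to the restricted operator then finishes the proof.

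The step I expect to be the main obstacle is the control of $\|(W_{k_1,k_2}-\lambda)f\|_{L^2}$ by $\|g\|_{L^2}$. Expanding $\|g\|_{L^2}^2=\|A^{-1}(k_1^2+k_2^2-\partial_{yy})f\|_{L^2}^2+\|(W_{k_1,k_2}-\lambda)f\|_{L^2}^2+(\text{cross term})$, the cross term produces, after integration by parts, a contribution involving $A^{-1}\int W'_{k_1,k_2}\,\mathrm{Im}(f'\bar f)$, and $W'_{k_1,k_2}$ grows polynomially in $y$, so it is not directly absorbable. This is precisely where the introduction's strategy of transforming the operator into the shear-flow case enters: a translation $y\mapsto y-\alpha_1$ (and rescaling) recasts $W_{k_1,k_2}$ as a standard degenerate shear profile, and a weighted resolvent estimate --- using that $W'$ is large only where $|W-\lambda|$ is already large, so the dangerous region is suppressed by the dissipation --- is needed to close the bound. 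Making all constants uniform in $(k_1,k_2)$ and $\lambda$, and carrying out the m-accretivity verification rigorously, are the remaining routine-but-technical points.
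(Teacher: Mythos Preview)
Your global strategy---Gearhart--Pr\"uss via Lemma \ref{lem:2.1}, Fourier decomposition in $(x_1,x_2)$, splitting into $k_2=0$ and $k_2\neq0$, completing the square, and a good/bad set argument with Gagliardo--Nirenberg on the bad set---is exactly what the paper does. The gap is in your treatment of the good set. You want to bound $\int_{G_\delta}|f|^2$ by $(c_1\delta^{m+2})^{-2}\|(W-\lambda)f\|_{L^2}^2$ and then control $\|(W-\lambda)f\|_{L^2}$ by $\|g\|_{L^2}$. You correctly identify that the resulting cross term is $A^{-1}\!\int W'\,\mathrm{Im}(f'\bar f)$, and since $W'$ is unbounded in $y$ this is not absorbable. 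Your proposed fix---that ``$W'$ is large only where $|W-\lambda|$ is already large''---is not true: for $W(y)=y^2$ and $\lambda=R^2$ with $R$ large, the bad set is near $y=\pm R$, where $|W-\lambda|$ is small but $|W'|=2R$ is large. The translation $y\mapsto y-\alpha_1$ does nothing to help here; the growth of $W'$ is intrinsic to the quadratic profile.

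The paper bypasses this entirely with a multiplier trick: instead of estimating $\|(W-\lambda)f\|_{L^2}$, it pairs $\widetilde{\mathcal L}_{A,k}f$ against $\chi f$, where $\chi$ is a smooth approximation of $\mathrm{sign}(W-\lambda)$ satisfying $|\chi'|\lesssim\delta^{-1}$. Taking imaginary parts gives
\[
|k|\int_E |W-\lambda|\,|f|^2 \le \|\widetilde{\mathcal L}_{A,k}f\|_{L^2}\|f\|_{L^2} + A^{-1}\|\chi'\|_{L^\infty}\|\partial_y f\|_{L^2}\|f\|_{L^2},
\]
so the dangerous commutator involves $\chi'$ (bounded, of size $\delta^{-1}$) rather than $W'$ (unbounded). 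Combined with $A^{-1}\|\partial_y f\|_{L^2}^2\le\|g\|_{L^2}\|f\|_{L^2}$ from the real part, this closes directly and yields the inequality of the form \eqref{eq:3.23}; the optimal choice is $\delta\sim (|k|A)^{-1/(m+4)}$. This is the missing idea in your proposal.
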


%\vskip .05in

We need to provide some explanations for the Assumption \ref{ass:3.1} and Proposition \ref{prop:3.2}. The assumption is reasonable and some special cases can be pointed out here. If $u_0(y)=1$, then $m=0$, the Assumption \ref{ass:3.1} is trivial by Taylor expansion. In the case of $u_0(y)=1$, if $l=1, \alpha_1=\alpha_1=0$, the assumption is similar with the Couette flow, if $l=2, \alpha_1=\alpha_2=0$, the assumption is similar with the Poiseuille flow. And we know that the operator $\mathcal{L}_A=L_A$ in the case of $u_0(y)=1$, thus we can finish the proof of Theorem \ref{thm:1.0} by the Proposition \ref{prop:3.2}.

\vskip .05in

Next, we prove the Proposition \ref{prop:3.2}. If $f(t,x_1,x_2,y)$  is the solution of Equation \eqref{eq:3.1}, then taking the Fourier transform in $x_1$ and $x_2 $, it is as follows
\begin{equation}\label{eq:3.0}
f_{k}(t,y)=\frac{1}{|\mathbb{T}^2|}\int_{\mathbb{T}^2}f(t,x_1,x_2,y)e^{-i(k_1 x_1+k_2 x_2)}dx_1dx_2,\ \ \ k=(k_1,k_2)\neq {\bf 0}.
\end{equation}
Here we denote
$$
k=(k_1,k_2)\neq {\bf 0},\ \ \ k^2=k_1^2+k_2^2,\ \ |k|=
\begin{cases}
|k_1|,\ \ \ k_2=0,\\
|k_2|,\ \ \ k_2\neq0.
\end{cases}
$$
Thus $f_{k}(t,y)$ satisfy
\begin{equation}\label{eq:3.01}
\partial_tf_k+\mathcal{L}_{A,k}f_k=0,\ \ \ f_k(0,y)=f_{0,k}(y),
\end{equation}
and
\begin{equation}\label{eq:3.4}
\mathcal{L}_{A,k}=A^{-1}(k^2-\partial_{yy})+ik_1u_0(y) y +ik_2u_0(y)y^2.
\end{equation}
We study the semigroup with the operator $-\mathcal{L}_{A,k}$ as generator and establish a lower bound of $\Psi(\mathcal{L}_{A,k})$, which is defined by \eqref{eq:2.1.1} and \eqref{eq:3.4}, it is as follows
\begin{equation}\label{eq:3.5}
\Psi(\mathcal{L}_{A,k})=\inf\{\|(\mathcal{L}_{A,k}-i\lambda I)f\|: f\in D(\mathcal{L}_{A,k}), \lambda\in \mathbb{R}, \|f\|=1 \}.
\end{equation}
Similar to the argument in \cite{Wei.2021}, we know that for any $f\in D(\mathcal{L}_{A,k})=H^{2}(\mathbb{R})$, one has
$$
Re\langle \mathcal{L}_{A,k} f, f\rangle \geq 0,
$$
and
$$
\|(\mathcal{L}_{A,k}+\lambda I)f\|_{L^2}\|f\|_{L^2}\geq Re\langle (\mathcal{L}_{A,k}+\lambda I)f, f\rangle\geq (Re\lambda)\|f\|_{L^2}^2,
$$
therefore, the $\mathcal{L}_{A,k}$ is $m$-accretive operator. Base on the Assumption \ref{ass:3.1}, we have the following the Gearchart-Pr\"{u}ss type theorem and the lower bound of $\Psi(\mathcal{L}_{A,k})$ in \eqref{eq:3.5}.

\begin{lemma}\label{lem:3.3}
Let $\mathcal{L}_{A,k}$ be an $m$-accretive operator in \eqref{eq:3.4}, then one has
\begin{equation}\label{eq:3.6}
\|e^{-t\mathcal{L}_{A,k}}\|_{L^2\rightarrow L^2}\leq e^{-t\Psi(\mathcal{L}_{A,k})+\pi/2}.
\end{equation}
If $u_0(y)$ satisfy the Assumption {\rm \ref{ass:3.1}}, $k\neq(0,0)$ and $A^{-1}|k|^{-1}<1$. Then there exist a positive constant $\epsilon_0$ is independent of $A$ and $k$, such that
\begin{equation}\label{eq:3.7}
\Psi(\mathcal{L}_{A,k})\geq \epsilon_0 A^{-\frac{m+2}{m+4}}|k|^{\frac{2}{m+4}},
\end{equation}
where $\Psi(\mathcal{L}_{A,k})$ is defined in \eqref{eq:3.5}.
\end{lemma}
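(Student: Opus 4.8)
The two assertions are of quite different character, so I would treat them separately. The semigroup bound \eqref{eq:3.6} is essentially for free: since it has already been verified in the excerpt that $\mathcal{L}_{A,k}$ is $m$-accretive on $D(\mathcal{L}_{A,k})=H^2(\mathbb{R})$, the estimate $\|e^{-t\mathcal{L}_{A,k}}\|_{L^2\to L^2}\le e^{-t\Psi(\mathcal{L}_{A,k})+\pi/2}$ is an immediate application of the Gearchart--Pr\"{u}ss type theorem, Lemma \ref{lem:2.1}, taking $H=\mathcal{L}_{A,k}$ and $\mathcal{X}=L^2(\mathbb{R})$. All the genuine work lies in the lower bound \eqref{eq:3.7} for $\Psi(\mathcal{L}_{A,k})$, which I would prove as a resolvent estimate.

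Unwinding the definition \eqref{eq:3.5}, it suffices to find $\epsilon_0>0$, independent of $A$ and $k$, such that for every normalized $f\in H^2(\mathbb{R})$ and every $\lambda\in\mathbb{R}$, writing $g:=(\mathcal{L}_{A,k}-i\lambda I)f$ and $W(y):=u_0(y)(k_1y+k_2y^2)$, one has $\|g\|_{L^2}\ge \epsilon_0 A^{-(m+2)/(m+4)}|k|^{2/(m+4)}$. The point is that $\mathcal{L}_{A,k}-i\lambda=A^{-1}(k^2-\partial_{yy})+i(W(y)-\lambda)$ splits into a self-adjoint dissipative part and a skew-adjoint multiplication part. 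Testing against $f$ and taking the real part annihilates the multiplier, giving the dissipation bound $A^{-1}(\|\partial_y f\|^2+k^2\|f\|^2)=\mathrm{Re}\langle g,f\rangle\le\|g\|$; in particular $\|\partial_y f\|^2\le A\|g\|$ and $k^2\le A\|g\|$.

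Next I would invoke Assumption \ref{ass:3.1} to quantify how far $W-\lambda$ stays from zero. When $k_2=0$ I write $W-\lambda=k_1\bigl(u_0(y)y-\lambda/k_1\bigr)$ and apply the assumption with $l=1$, $\alpha_1=\alpha_2=0$; when $k_2\ne0$ I complete the square, $k_1y+k_2y^2=k_2\bigl((y+\alpha_1)^2+\alpha_2\bigr)$ with $\alpha_1=k_1/(2k_2)$ and $\alpha_2=-k_1^2/(4k_2^2)$, and apply it with $l=2$. In either case, away from the union $B_\delta$ of $\delta$-neighborhoods of the at most $N$ exceptional points $y_j$, one obtains $|W(y)-\lambda|\ge c_1|k|\delta^{m+l}$. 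I then split the mass $1=\|f\|^2=\int_{B_\delta}|f|^2+\int_{\mathbb{R}\setminus B_\delta}|f|^2$. On $B_\delta$, whose measure is at most $2N\delta$, Agmon's inequality $\|f\|_{L^\infty}^2\le 2\|f\|_{L^2}\|\partial_y f\|_{L^2}$ combined with the dissipation bound yields $\int_{B_\delta}|f|^2\lesssim N\delta\|\partial_y f\|\lesssim N\delta (A\|g\|)^{1/2}$; on the complement the lower bound on $|W-\lambda|$ gives $\int_{\mathbb{R}\setminus B_\delta}|f|^2\le (c_1|k|\delta^{m+l})^{-2}\|(W-\lambda)f\|^2$.

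Combining the two contributions and choosing $\delta\sim (A\|g\|)^{-1/2}$ to balance them, the resulting inequality $1\lesssim (|k|\delta^{m+l})^{-2}\|g\|^2+\delta(A\|g\|)^{1/2}$ collapses to $\|g\|\gtrsim |k|^{2/(m+l+2)}A^{-(m+l)/(m+l+2)}$; the case $l=2$ (i.e. $k_2\ne0$) is the weaker and reproduces exactly the exponent in \eqref{eq:3.7}, while under the hypothesis $A^{-1}|k|^{-1}<1$ the case $l=1$ gives a strictly stronger rate, so \eqref{eq:3.7} holds uniformly; the same hypothesis is what forces the optimal $\delta$ to lie in $(0,\delta_0)$ so that Assumption \ref{ass:3.1} is applicable. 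I expect the main obstacle to be the far-region estimate: because $W-\lambda$ changes sign, the signed quantity $\int(W-\lambda)|f|^2$ coming from the imaginary part of $\langle g,f\rangle$ is insufficient, and one must instead control $\|(W-\lambda)f\|_{L^2}$ by $\|g\|$. This I would do by testing against $(W-\lambda)f$ and integrating by parts, the delicate step being the absorption of the commutator term $A^{-1}\langle W'\partial_y f,f\rangle$, where the polynomial growth of the unbounded weight $W$ and of $W'$ must be tamed using the dissipation bound. A secondary, bookkeeping obstacle is keeping the $k_2=0$ and $k_2\ne0$ cases cleanly separated so that the power of $|k|$ tracks correctly through the optimization.
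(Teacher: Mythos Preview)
Your outline is close to the paper's argument in spirit—the near/far decomposition, the use of the real part for the dissipation bound $\|\partial_y f\|^2\le A\|g\|$, the case split $k_2=0$ versus $k_2\ne0$, and the final optimization all match. The gap is precisely in the step you flag as the main obstacle: the far-region estimate.

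Your plan is to bound $\int_{\mathbb{R}\setminus B_\delta}|f|^2\le (c_1|k|\delta^{m+l})^{-2}\|(W-\lambda)f\|_{L^2}^2$ and then control $\|(W-\lambda)f\|$ by testing against $(W-\lambda)f$. But the integration by parts then produces $A^{-1}\langle \partial_y f, W'f\rangle$, and $W'(y)$ is an unbounded weight (polynomially growing in $y$, with no uniformity in $k$). The dissipation bound controls $\|\partial_y f\|$, not $\|W'f\|$, and there is nothing in the hypotheses that gives weighted decay of $f$; so this commutator cannot be absorbed, and the estimate does not close.

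The paper avoids this entirely by a different choice of test function. Instead of $(W-\lambda)f$, one pairs $g$ with $\chi f$, where $\chi$ is a smooth bounded approximation of $\mathrm{sign}(W-\lambda)$ satisfying $|\chi|\le1$, $|\chi'|\le c_2\delta^{-1}$, $\chi(W-\lambda)\ge0$ everywhere, and $\chi(W-\lambda)=|W-\lambda|$ on $E=\mathbb{R}\setminus B_\delta$. Taking the imaginary part of $\langle g,\chi f\rangle$ then gives
\[
|k|\int_E|W-\lambda|\,|f|^2\;\le\;|k|\!\int_{\mathbb{R}}\chi(W-\lambda)|f|^2\;\le\;\|g\|\|f\|+c_2A^{-1}\delta^{-1}\|\partial_y f\|\|f\|,
\]
so the commutator involves only the bounded multiplier $\chi'$, not $W'$. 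This yields the \emph{linear} far-region bound
\[
\int_E|f|^2\;\lesssim\;\delta^{-(m+l)}|k|^{-1}\Bigl(\|g\|+A^{-1}\delta^{-1}\|\partial_y f\|\Bigr),
\]
after which the choice $\delta\sim(|k|A)^{-1/(m+l+2)}$ (independent of $\|g\|$) balances the near and far pieces and gives \eqref{eq:3.7}. The smooth-sign multiplier is the missing idea in your sketch.
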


\begin{proof}
Since $\mathcal{L}_{A,k}$ is an $m$-accretive operator, then the \eqref{eq:3.6} is trivial by Lemma \ref{lem:2.1}. Next, we will prove the \eqref{eq:3.7}. We consider the following two cases.

\vskip .05in

\noindent {\bf Case 1}.\ \ If $k_2=0$, the operator $\mathcal{L}_{A,k}$ can be written as
$$
\mathcal{L}_{A,k}=A^{-1}(k^2-\partial_{yy})+ik_1u_0(y) y,
$$
and since $k=(k_1,k_2)\neq {\bf 0}$, we know that $k_1\neq 0$ and $|k|=|k_1|$. For any fixed $\lambda\in \mathbb{R}$, define
\begin{equation}\label{eq:3.8}
\widetilde{\mathcal{L}}_{A,k}=\mathcal{L}_{A,k}-i\lambda=A^{-1}(k^2-\partial_{yy})+ik_1\left(u_0(y)y-\widetilde{\lambda}\right),
\end{equation}
where $\widetilde{\lambda}=\lambda/k_1 $. Taking the set as follows
\begin{equation}\label{eq:3.9}
E=\{y\in \mathbb{R}: |y-y_j|\geq \delta, \quad \forall j\in\{1,\ldots, n\} \},
\end{equation}
and the $y_j$ satisfy the Assumption \ref{ass:3.1}. Since $u_0(y)$ is continuous function, we define function
$$
\chi: \mathbb{R}\rightarrow [-1,1]
$$
is a smooth approximation function of $  sign\left(u_0(y)y-\widetilde{\lambda} \right)$, there exist $c_2>0$, such that for any $y\in \mathbb{R}$, one has
\begin{equation}\label{eq:3.10}
|\chi'(y)|\leq c_2\delta^{-1},  \ \ \ |\chi''(y)|\leq c_2\delta^{-2},
\end{equation}
and
\begin{equation}\label{eq:3.11}
 \chi(y) \left(u_0(y)y-\widetilde{\lambda} \right)\geq 0.
\end{equation}
In addition, for any $y\in E$, one has
\begin{equation}\label{eq:3.12}
\chi(y)\left(u_0(y)y-\widetilde{\lambda} \right)=|u_0(y)y-\widetilde{\lambda}| .
\end{equation}
For any $f\in D(\mathcal{\widetilde{L}}_{A,k})$ and $\|f\|_{L^2}=1$, we obtain by the definition of $\mathcal{\widetilde{L}}_{A,k}$ in \eqref{eq:3.8} that
$$
\begin{aligned}
\langle\mathcal{\widetilde{L}}_{A,k}f, \chi f \rangle &=A^{-1}\langle(k^2-\partial_{yy})f,  \chi f\rangle+ik_1\langle (u_0(y)y-\widetilde{\lambda})f,\chi f\rangle\\
&=-A^{-1}\langle\partial_{yy}f, \chi f\rangle+A^{-1} k^2\langle f, \chi f\rangle+ik_1\langle (u_0(y)y-\widetilde{\lambda})f,\chi f\rangle\\
&=A^{-1}\langle\partial_{y}f, \chi' f\rangle+A^{-1}\langle\partial_{y}f, \chi \partial_{y}f\rangle+A^{-1} k^2\langle f, \chi f\rangle+ik_1\langle (u_0(y)y-\widetilde{\lambda})f,\chi f\rangle,
\end{aligned}
$$
then
\begin{equation}\label{eq:3.13}
{\rm Im}\langle\mathcal{\widetilde{L}}_{A,k}f, \chi f \rangle=A^{-1}{\rm Im}\langle\partial_{y}f, \chi' f\rangle+k_1\langle (u_0(y)y-\widetilde{\lambda})f,\chi f\rangle.
\end{equation}
Since $u_0(y)$ satisfy the Assumption \ref{ass:3.1}, one gets
$$
\left|u_0(y)y-\widetilde{\lambda}\right|\geq c_1 \delta^{m+1}, \ \  y\in E,
$$
then we deduce by \eqref{eq:3.11} and \eqref{eq:3.12} that
\begin{equation}\label{eq:3.14}
\langle (u_0(y)y-\widetilde{\lambda})f,\chi f\rangle\geq \int_{E}|u_0(y)y-\widetilde{\lambda}||f(y)|^2dy\geq c_1 \delta^{m+1}\int_{E}|f(y)|^2dy.
\end{equation}
By \eqref{eq:3.10}, \eqref{eq:3.13} and Cauchy-Schwartz inequality, one has
\begin{equation}\label{eq:3.15}
\begin{aligned}
|k|\langle (u_0(y)y-\widetilde{\lambda})f,\chi f\rangle&=\left|k_1\langle (u_0(y)y-\widetilde{\lambda})f,\chi f\rangle\right|\\
&=\left|{\rm Im}\langle\mathcal{\widetilde{L}}_{A,k}f, \chi f \rangle-A^{-1}{\rm Im}\langle\partial_{y}f, \chi' f\rangle\right|\\
&\leq \|\mathcal{\widetilde{L}}_{A,k}f\|_{L^2}\|\chi f\|_{L^2}+A^{-1} \|\partial_{y}f\|_{L^2}\|\chi' f\|_{L^2} \\
&\leq \|\mathcal{\widetilde{L}}_{A,k}f\|_{L^2}\|f\|_{L^2}+c_2A^{-1} \delta^{-1}\|\partial_{y}f\|_{L^2}\|f\|_{L^2}.
\end{aligned}
\end{equation}
Combining \eqref{eq:3.14} and \eqref{eq:3.15}, we obtain
\begin{equation}\label{eq:3.16}
\begin{aligned}
\int_{E}|f(y)|^2dy&\leq c_1^{-1}\delta^{-m-1}\langle (u_0(y)y-\widetilde{\lambda})f,\chi f\rangle\\
&\leq c_1^{-1}\delta^{-m-1}|k|^{-1}\left(\|\mathcal{\widetilde{L}}_{A,k}f\|_{L^2}\|f\|_{L^2}+c_2A^{-1} \delta^{-1}\|\partial_{y}f\|_{L^2}\|f\|_{L^2}\right).
\end{aligned}
\end{equation}
Since
$$
\langle \mathcal{\widetilde{L}}_{A,k}f, f \rangle=A^{-1}\langle (k^2-\partial_{yy})f,f \rangle+ik_1\left\langle \left(u_0(y)y-\widetilde{\lambda}\right)f,f \right\rangle,
$$
we can easily get
$$
{\rm Re} \langle \mathcal{\widetilde{L}}_{A,k}f, f \rangle=A^{-1} k^2\|f\|_{L^2}^2+A^{-1} \|\partial_{y}f\|_{L^2}^2.
$$
Thus, we deduce by Cauchy-Schwartz inequality that
\begin{equation}\label{eq:3.17}
\|\partial_{y}f\|_{L^2}\leq A^{\frac{1}{2}}\left({\rm Re} \langle \mathcal{\widetilde{L}}_{A,k}f, f \rangle \right)^{\frac{1}{2}}
\leq A^{\frac{1}{2}}\|\mathcal{\widetilde{L}}_{A,k}f\|_{L^2}^{\frac{1}{2}}\|f\|_{L^2}^{\frac{1}{2}}.
\end{equation}
Combining \eqref{eq:3.16} and \eqref{eq:3.17}, one gets
$$
\int_{E}|f(y)|^2dy\leq c_1^{-1}\delta^{-m-1}|k|^{-1}\left(\|\mathcal{\widetilde{L}}_{A,k}f\|_{L^2}\|f\|_{L^2}+c_2A^{-\frac{1}{2}} \delta^{-1}\|\mathcal{\widetilde{L}}_{A,k}f\|_{L^2}^{\frac{1}{2}}\|f\|_{L^2}^{\frac{3}{2}}\right).
$$
By using Young's inequality, to obtain
$$
c_1^{-1}\delta^{-m-1}|k|^{-1}c_2A^{-\frac{1}{2}} \delta^{-1}\|\mathcal{\widetilde{L}}_{A,k}f\|_{L^2}^{\frac{1}{2}}\|f\|_{L^2}^{\frac{3}{2}}\leq \frac{1}{4}\|f\|_{L^2}^2+\widetilde{c}|k|^{-2}A^{-1} \delta^{-2m-4}\|\mathcal{\widetilde{L}}_{A,k}f\|_{L^2}\|f\|_{L^2},
$$
where $\widetilde{c}=c^2_2/c^2_1$. Thus the \eqref{eq:3.16} can be written as
\begin{equation}\label{eq:3.18}
\int_{E}|f(y)|^2dy\leq \left(c_1^{-1}\delta^{-m-1}|k|^{-1}+\widetilde{c}|k|^{-2}A^{-1} \delta^{-2m-4}\right)\|\mathcal{\widetilde{L}}_{A,k}f\|_{L^2}\|f\|_{L^2}+\frac{1}{4}\|f\|_{L^2}^2.
\end{equation}
Denoted $E^c$ as the complement of $E$, then $|E^c|\leq 2N\delta$ by the definition of $E$ in \eqref{eq:3.9} and
$$
\int_{E^c}|f(y)|^2dy\leq 2N\delta \|f\|^2_{L^\infty},
$$
by Gagliardo-Nirenberg inequality, one has
$$
\|f\|_{L^\infty}\lesssim\|f\|^{\frac{1}{2}}_{L^2}\|\partial_yf\|^{\frac{1}{2}}_{L^2}.
$$
Then we deduce by \eqref{eq:3.17} and Young's inequality that
\begin{equation}\label{eq:3.19}
\begin{aligned}
\int_{E^c}|f(y)|^2dy&\lesssim N\delta\|f\|_{L^2}\|\partial_yf\|_{L^2}\\
&\lesssim N\delta A^{\frac{1}{2}}\|\mathcal{\widetilde{L}}_{A,k}f\|_{L^2}^{\frac{1}{2}}\|f\|_{L^2}^{\frac{3}{2}}\\
&\leq \frac{1}{4}\|f\|_{L^2}^2+ C(N\delta)^2A\|\mathcal{\widetilde{L}}_{A,k}f\|_{L^2}\|f\|_{L^2}.
\end{aligned}
\end{equation}
Combining \eqref{eq:3.18} and \eqref{eq:3.19}, we have
\begin{equation}\label{eq:3.20}
\|f\|_{L^2}^2\leq 2\left(c_1^{-1}\delta^{-m-1}|k|^{-1}+\widetilde{c}|k|^{-2}A^{-1} \delta^{-2m-4}+C(N\delta)^2A \right)\|\mathcal{\widetilde{L}}_{A,k}f\|_{L^2}\|f\|_{L^2}.
\end{equation}
Taking $\delta$ small enough and
$$
\delta=c_3\left(|k|A\right)^{-\frac{1}{m+3}},
$$
where $c_3>0$ is small constant. Then there exist a constant $C_0=C(c_1,c_2,c_3,m,N)$, such that
$$
c_1^{-1}\delta^{-m-1}|k|^{-1}+\widetilde{c}|k|^{-2}A^{-1} \delta^{-2m-4}+C(N\delta)^2A \leq C_0A^{\frac{m+1}{m+3}}|k|^{-\frac{2}{m+3}}.
$$
Therefore, we can imply by \eqref{eq:3.20} that
$$
\|\mathcal{\widetilde{L}}_{A,k}f\|_{L^2}\geq \epsilon_0A^{-\frac{m+1}{m+3}}|k|^{\frac{2}{m+3}}\|f\|_{L^2},
$$
where $\epsilon_0=1/2C_0$. Since $f$ is arbitrary, then we have
\begin{equation}\label{eq:3.21}
\Psi(\mathcal{L}_{A,k})\geq \epsilon_0A^{-\frac{m+1}{m+3}}|k|^{\frac{2}{m+3}}.
\end{equation}

\vskip .05in

\noindent {\bf Case 2}.\ \ If $k_2\neq0$, the operator $\mathcal{L}_{A,k}$ can be written as
$$
\begin{aligned}
\mathcal{L}_{A,k}&=A^{-1}(k^2-\partial_{yy})+ik_1u_0(y) y +ik_2u_0(y)y^2\\
&=A^{-1}(k^2-\partial_{yy})+ik_2u_0(y)\left(y^2+k_1k^{-1}_2y\right)\\
&=A^{-1}(k^2-\partial_{yy})+ik_2u_0(y)\left((y+\alpha_{k,1})^2+\alpha_{k,2}\right)
\end{aligned}
$$
where
$$
\alpha_{k,1}=\frac{k_1}{2k_2},\ \ \ \ \alpha_{k,2}=-\frac{k^2_1}{4k^2_2}.
$$
For any fixed $\lambda\in \mathbb{R}$, define
\begin{equation}\label{eq:3.22}
\widetilde{\mathcal{L}}_{A,k}=\mathcal{L}_{A,k}-i\lambda=A^{-1}(k^2-\partial_{yy})+ik_2
\left(u_0(y)\left((y+\alpha_{k,1})^2+\alpha_{k,2}\right)-\widetilde{\lambda}\right),
\end{equation}
where $\widetilde{\lambda}=\lambda/k_2$. Similar to the definition of $E$ in \eqref{eq:3.9} and $u_0(y)$ satisfy the Assumption \ref{ass:3.1}, one has
$$
\left|u_0(y)\left((y+\alpha_{k,1})^2+\alpha_{k,2}\right)-\widetilde{\lambda}\right|\geq c_1\delta^{m+2},\ \ \  y\in E.
$$
Similar to the discussion of Case 1, we obtain
\begin{equation}\label{eq:3.23}
\|f\|_{L^2}^2\leq 2\left(c_1^{-1}\delta^{-m-2}|k|^{-1}+\widetilde{c}|k|^{-2}A^{-1} \delta^{-2m-6}+C(N\delta)^2A \right)\|\mathcal{\widetilde{L}}_{A,k}f\|_{L^2}\|f\|_{L^2}.
\end{equation}
Taking $\delta$ small enough and
$$
\delta=c_3\left(|k|A\right)^{-\frac{1}{m+4}},
$$
where $c_3>0$ is small constant. Then there exist a constant $C_0=C(c_1,c_2,c_3,m,N)$, such that
$$
c_1^{-1}\delta^{-m-2}|k|^{-1}+\widetilde{c}|k|^{-2}A^{-1} \delta^{-2m-6}+C(N\delta)^2A \leq C_0A^{\frac{m+2}{m+4}}|k|^{-\frac{2}{m+4}}.
$$
Therefore, similar to the discussion of \eqref{eq:3.21}, we obtain
\begin{equation}\label{eq:3.24}
\Psi(\mathcal{L}_{A,k})\geq \epsilon_0A^{-\frac{m+2}{m+4}}|k|^{\frac{2}{m+4}},\ \ \ \epsilon_0=1/2C_0.
\end{equation}
Since  $A^{-1}|k|^{-1}<1$, we know that
$$
A^{-\frac{m+1}{m+3}}|k|^{\frac{2}{m+3}}\geq A^{-\frac{m+2}{m+4}}|k|^{\frac{2}{m+4}}.
$$
Combining Case 1 and Case 2, we imply that for any $k=(k_1,k_2)\neq {\bf 0}$, one has
$$
\Psi(\mathcal{L}_{A,k})\geq \epsilon_0 A^{-\frac{m+2}{m+4}}|k|^{\frac{2}{m+4}}.
$$
This completes the proof of Lemma \ref{lem:3.3}.
\end{proof}

Next, we give the proof of Proposition \ref{prop:3.2} base on the Lemma \ref{lem:3.3}.

\begin{proof}[The proof of Proposition {\rm \ref{prop:3.2}}]
For any $g(x)\in L^2(\mathbb{T}^2\times \mathbb{R})$, considering the Equation \eqref{eq:3.1} with initial data $f_0(x)=P_{\neq}g(x)$, then the solution can be written as
$$
f(t,x)=e^{-t\mathcal{L}_{A}}f_0=e^{-t\mathcal{L}_{A}}P_{\neq}g,
$$
and we can obtain $P_0f=0$. Here the operator $\mathcal{L}_{A}$ is defined in \eqref{eq:3.3}, the operator $P_0$ and $P_{\neq}$ are defined in \eqref{eq:1.5}. Through Fourier series, the solution of Equation \eqref{eq:3.1} can be written as
\begin{equation}\label{eq:3.27}
f(t,x_1,x_2,y)=e^{-t\mathcal{L}_{A}}P_{\neq}g=\sum_{k=(k_1,k_2)\neq {\bf 0}}f_k(t,y)e^{i(k_1x_1+k_2x_2)},
\end{equation}
where $f_k(t,y)$ is defined in \eqref{eq:3.0} and is the solution of Equation \eqref{eq:3.01}. Then we have
$$
f_{k}(t,y)=e^{-t\mathcal{L}_{A,k}}f_{0,k},
$$
and the operator $\mathcal{L}_{A,k}$ is defined in \eqref{eq:3.4}. By the Lemma \ref{lem:3.3}, one gets
$$
\|e^{-t\mathcal{L}_{A,k}}\|_{L^2\rightarrow L^2}\leq e^{-\lambda_{A,k}t+\pi/2},
$$
where $\lambda_{A,k}=\epsilon_0 A^{-\frac{m+2}{m+4}}|k|^{\frac{2}{m+4}}$. Then we deduce by Plancherel equality and \eqref{eq:3.27} that
$$
\begin{aligned}
\|f\|^2_{L^2}&=\|e^{-t\mathcal{L}_{A}}P_{\neq}g\|^2_{L^2}=\sum_{k\neq {\bf 0}}\|f_k\|^2_{L^2}=\sum_{k\neq {\bf 0}}\|e^{-t\mathcal{L}_{A,k}}f_{0,k}\|^2_{L^2}\\
&\leq \sum_{k\neq {\bf 0}}e^{-2\lambda_{A,k} t+\pi}\|f_{0,k}\|^2_{L^2}\leq \max_{k\neq 0}e^{-2\lambda_{A,k} t+\pi}\sum_{k\neq {\bf 0}}\|f_{0,k}\|^2_{L^2}\\
&\leq e^{-2\lambda'_{A} t+\pi}\|f_{0}\|^2_{L^2}\leq e^{-2\lambda'_{A} t+\pi}\|g\|^2_{L^2}\\
%&\lesssim e^{-2\lambda'_{A} t}\|g\|^2_{L^2}.
\end{aligned}
$$
Since $g$ is arbitrary, then we have
$$
\|e^{-t\mathcal{L}_A}P_{\neq}\|_{L^2\rightarrow L^2}\leq e^{-\lambda'_At+\pi/2}.
$$
This completes the proof of Proposition \ref{prop:3.2}.
\end{proof}

%\vskip .05in

\section{Global existence of Equation \eqref{eq:1.1}}\label{sec.4}

In this section, we will finish the proof of Theorem \ref{thm:1.1}. We know that the global classical solution of  Equation \eqref{eq:1.1} can be established if the Proposition \ref{prop:2.3} is true by Section \ref{sec.2}. Thus, the aim of this section is to prove the Proposition \ref{prop:2.3}, and we finish the proof by some lemmas. First, we give the following lemma.

\begin{lemma}[ $L^\infty L_y^2$ estimate of $n^0$ ]\label{lem:4.1}
Let $n^0$ and $n_{\neq}$ are the solutions of Equations \eqref{eq:2.2} and \eqref{eq:2.3} with $n_0$, and $n_{\neq}$ satisfy the Assumption {\rm \ref{assm:2.2}}. There exist $A_0=A(n_0)$ and a positive constant $B_1=B(\|n_0\|_{L^2}, M)$, if $A>A_0$, we have
$$
\|n^{0}\|_{L^\infty(0, T^\ast;L_y^2})\leq B_1.
$$
\end{lemma}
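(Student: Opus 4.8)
The plan is to run a weighted $L^2$ energy estimate on the one-dimensional zero-mode equation \eqref{eq:2.2}, treating the self-interaction as a one-dimensional Keller--Segel nonlinearity (tamed by mass conservation) and treating the average $\big(\na\cdot(n_{\neq}\na c_{\neq})\big)^0$ as a forcing term that is integrable in time thanks to Assumption \ref{assm:2.2}. Throughout I use that $n\geq0$ forces $n^0\geq0$, and via $-\p_{yy}c^0=n^0-c^0$ and the maximum principle $c^0\geq0$; moreover Proposition \ref{prop:2.1} gives conserved mass, so $\int_{\mathbb R}n^0\,dy=M/|\mathbb T^2|$ is constant in time. First I would multiply the first line of \eqref{eq:2.2} by $n^0$ and integrate in $y$:
\[
\tfrac12\ddt\|n^0\|_{L_y^2}^2+\tfrac1A\|\p_y n^0\|_{L_y^2}^2
=\tfrac1A\int_{\mathbb R} n^0\,\p_yc^0\,\p_y n^0\,dy-\tfrac1A\int_{\mathbb R}\big(\na\cdot(n_{\neq}\na c_{\neq})\big)^0 n^0\,dy .
\]

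For the chemotactic term I integrate by parts once more and use $-\p_{yy}c^0=n^0-c^0$ to rewrite $\tfrac1A\int n^0\p_yc^0\p_y n^0=\tfrac1{2A}\int(n^0)^3-\tfrac1{2A}\int(n^0)^2c^0\le\tfrac1{2A}\int_{\mathbb R}(n^0)^3\,dy$, the last step using $n^0,c^0\ge0$. The cubic term is handled by the one-dimensional Gagliardo--Nirenberg inequality $\|n^0\|_{L^3}^3\lesssim\|\p_y n^0\|_{L_y^2}^{4/3}\|n^0\|_{L_y^1}^{5/3}$ together with $\|n^0\|_{L_y^1}=M/|\mathbb T^2|$; Young's inequality then absorbs a fraction of $\tfrac1A\|\p_y n^0\|_{L_y^2}^2$ and leaves a constant $\tfrac{C}{A}M^5$. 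This is precisely the mechanism by which the one-dimensional Keller--Segel equation stays globally bounded, and I note that both the dissipation and this source carry the same factor $A^{-1}$.

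The main work is the forcing term. Integrating by parts in $(x_1,x_2,y)$ and using that $n^0$ depends only on $y$, I rewrite it as $\tfrac1{A|\mathbb T^2|}\int_{\mathbb T^2\times\mathbb R} n_{\neq}\,\p_yc_{\neq}\,\p_y n^0\,dx$, so Cauchy--Schwarz and Young absorb another fraction of $\tfrac1A\|\p_y n^0\|_{L_y^2}^2$ at the cost of $\tfrac{C}{A}\|n_{\neq}\p_yc_{\neq}\|_{L^2}^2$. I would bound the product by $\|n_{\neq}\|_{L^4}^2\|\na c_{\neq}\|_{L^4}^2$, using elliptic regularity for $-\Delta c_{\neq}+c_{\neq}=n_{\neq}$ (so $\|\na c_{\neq}\|_{L^4}\lesssim\|c_{\neq}\|_{H^2}\lesssim\|n_{\neq}\|_{L^2}$ since $H^1\hookrightarrow L^4$ in three dimensions) and the interpolation $\|n_{\neq}\|_{L^4}^2\lesssim\|n_{\neq}\|_{L^2}^{1/2}\|\na n_{\neq}\|_{L^2}^{3/2}$ (valid for nonzero modes on $\mathbb T^2\times\mathbb R$ via Poincar\'e), giving $\|n_{\neq}\p_yc_{\neq}\|_{L^2}^2\lesssim\|n_{\neq}\|_{L^2}^{5/2}\|\na n_{\neq}\|_{L^2}^{3/2}$. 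Integrating in time and applying H\"older (exponents $4/3$ and $4$) splits this into $\big(\int_0^t\|\na n_{\neq}\|_{L^2}^2\big)^{3/4}\big(\int_0^t\|n_{\neq}\|_{L^2}^{10}\big)^{1/4}$; the first factor is $\lesssim(A\,C_{\neq}\|n_0\|_{L^2}^2)^{3/4}$ by (A-1) and the second is $\lesssim(C_{\neq}\|n_0\|_{L^2}^2)^{5/4}\lambda_A^{-1/4}$ by (A-2). With $\lambda_A=\epsilon_0A^{-1/2}$ the total forcing budget $\tfrac{C}{A}\int_0^t\|n_{\neq}\p_yc_{\neq}\|_{L^2}^2\,d\tau$ scales like $A^{-1/8}$ times a constant depending only on $\|n_0\|_{L^2}$, $M$ and $C_{\neq}$, hence is bounded (indeed small) once $A>A_0$.

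Finally I close the estimate by a nonlinear comparison rather than naive Gr\"onwall. Writing $y(t)=\|n^0(t)\|_{L_y^2}^2$, the above yields $y'\le-\tfrac{c}{AM^4}y^3+\tfrac{C}{A}M^5+h(t)$, where I used the coercivity $\|\p_y n^0\|_{L_y^2}^2\gtrsim\|n^0\|_{L_y^2}^6/M^4$ (a consequence of $\|n^0\|_{L_y^2}^2\le M\|n^0\|_{L^\infty_y}\lesssim M\|n^0\|_{L_y^2}^{1/2}\|\p_y n^0\|_{L_y^2}^{1/2}$) and $\int_0^\infty h\,d\tau\le K$ with $K\lesssim A^{-1/8}(\cdots)$. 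Since the cubic dissipation dominates the constant source once $y$ exceeds $(CM^9/c)^{1/3}$, and the integrable forcing $h$ can raise $y$ by at most $K$ in total, one obtains $y(t)\le\max\{\|n^0_0\|_{L_y^2}^2,(CM^9/c)^{1/3}\}+K$ uniformly on $[0,T^\ast]$, which is the desired $B_1=B(\|n_0\|_{L^2},M)$. The step I expect to be most delicate is the forcing term: extracting a time-integrable bound for the coupling with the correct negative power of $A$, which is exactly where Assumption \ref{assm:2.2} and the enhanced dissipation rate $\lambda_A$ enter decisively; the one-dimensional self-interaction, by contrast, is the classical globally-bounded Keller--Segel mechanism.
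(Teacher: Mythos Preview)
Your proof is correct and follows essentially the same approach as the paper: an $L^2_y$ energy estimate on \eqref{eq:2.2}, coercivity of the dissipation via $\|\p_y n^0\|_{L^2_y}^2\gtrsim\|n^0\|_{L^2_y}^6/M^4$, and time-integrability of the forcing $(\na\cdot(n_{\neq}\na c_{\neq}))^0$ using Assumption~\ref{assm:2.2}. The only cosmetic difference is your treatment of the self-interaction (integrating by parts to a cubic and using $\|n^0\|_{L^3_y}^3\lesssim\|\p_y n^0\|_{L^2_y}^{4/3}\|n^0\|_{L^1_y}^{5/3}$, versus the paper's direct bound $\|\p_y c^0\|_{L^\infty_y}\lesssim\|n^0\|_{L^2_y}$ from Lemma~\ref{lem:A.2}); both feed into the same ODE comparison to close.
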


\begin{proof}
Let us multiply both sides of \eqref{eq:2.2} by $n^0$ and integrate over $\mathbb{R}$, we obtain that
\begin{equation}\label{eq:4.1}
\frac{1}{2}\frac{d}{dt}\|n^0\|^2_{L^2_y}+\frac{1}{A}\|\partial_y n^0\|^2_{L^2_y}+\frac{1}{A} \int_{\mathbb{R}}\partial_y(n^{0} \partial_y c^0)n^0dy+\frac{1}{A}\int_{\mathbb{R}}\left(\nabla\cdot(n_{\neq} \nabla c_{\neq})\right)^{0}n^0dy=0.
\end{equation}
Using integral by part, H\"{o}lder's inequality, Young's inequality, Gagliardo-Nirenberg inequality, Lemma \ref{lem:A.2} and \ref{lem:A.3}, one gets
$$
\begin{aligned}
\left|\frac{1}{A} \int_{\mathbb{R}}\partial_y(n^{0} \partial_y c^0)n^0dy\right|&\lesssim \frac{1}{A}\|n^{0}\|_{L^2_y}\|\partial_y c^0\|_{L^\infty_y}\|\partial_y n^0\|_{L^2_y}\\
&\leq \frac{1}{4A}\|\partial_y n^0\|^2_{L^2_y}+\frac{C}{A}\|n^{0}\|^4_{L^2_y},
\end{aligned}
$$
and
$$
\begin{aligned}
\left|\frac{1}{A}\int_{\mathbb{R}}\left(\nabla\cdot(n_{\neq} \nabla c_{\neq})\right)^{0}n^0dy\right|
&=\left|\frac{1}{A|\mathbb{T}^2|}\int_{\mathbb{T}^2\times \mathbb{R}}\nabla\cdot(n_{\neq} \nabla c_{\neq})n^0dx_1dx_2dy\right|\\
&\lesssim\frac{1}{A} \|n_{\neq}\|_{L^4}\|\nabla c_{\neq}\|_{L^4}\|\partial_yn^0\|_{L_{y}^2}\\
&\lesssim\frac{1}{A} \|n_{\neq}\|^{\frac{5}{4}}_{L^2}\|\nabla n_{\neq}\|^{\frac{3}{4}}_{L^2}\|\partial_yn^0\|_{L_{y}^2}\\
&\leq \frac{1}{4A}\|\partial_yn^0\|^2_{L_{y}^2}+\frac{1}{64C_{\neq}A}\|\nabla n_{\neq}\|^2_{L^2}+\frac{C}{A}\|n_{\neq}\|^{10}_{L^2}.
\end{aligned}
$$
Combining Gagliardo-Nirenberg inequality and  $\|n^{0}\|_{L^1_y}\leq CM$, we also have
$$
-\|\partial_y n^0\|^2_{L^2_y}\leq -\frac{\|n^{0}\|^6_{L^2_y}}{CM^4}.
$$
Then we deduce by \eqref{eq:4.1} that
\begin{equation}\label{eq:4.2}
\frac{d}{dt}\|n^0\|^2_{L^2_y}\leq-\frac{\|n^{0}\|^4_{L^2_y}}{CAM^4}\left(\|n^{0}\|^2_{L^2_y}-CM^4\right)+\frac{1}{32C_{\neq}A}\|\nabla n_{\neq}\|^2_{L^2}+\frac{C}{A}\|n_{\neq}\|^{10}_{L^2}.
\end{equation}
Define
$$
G(t)=\int_{0}^{t}\frac{1}{32C_{\neq}A}\|\nabla n_{\neq}\|^2_{L^2}+\frac{C}{A}\|n_{\neq}\|^{10}_{L^2}ds,
$$
then for any $t\in [0,T^\ast]$, we deduce by Assumption \ref{assm:2.2} that
$$
\int_{0}^{t}\frac{1}{32C_{\neq}A}\|\nabla n_{\neq}\|^2_{L^2}ds\leq \frac{1}{2}\|n_0\|^2_{L^2},
$$
and
$$
\int_{0}^{t}\frac{C}{A}\|n_{\neq}(s)\|^{10}_{L^2}ds
\lesssim \frac{(4C_{\neq})^5\|n_{0}\|^{8}_{L^2}}{5A\lambda_A}\|n_{0}\|^{2}_{L^2},
$$
where $\lambda_A$ is defined in \eqref{eq:1.6} and
$$
\frac{(4C_{\neq})^5\|n_{0}\|^{8}_{L^2}}{5A\lambda_A}=\frac{(4C_{\neq})^5\|n_{0}\|^{8}_{L^2}}{5\epsilon_0A^{\frac{1}{2}}}\rightarrow 0,\ \ \ A\rightarrow \infty.
$$
Then for any  $t\in [0,T^\ast]$ and $A$ is large enough, one has
\begin{equation}\label{eq:4.3}
0\leq G(t)\leq \|n_0\|^2_{L^2}.
\end{equation}
Combining \eqref{eq:4.2} and  \eqref{eq:4.3}, one has
\begin{equation}\label{eq:4.4}
\frac{d}{dt}\left(\|n^0\|^2_{L^2_y}-G(t)\right)
\leq -\frac{\|n^{0}\|^4_{L^2_y}}{CAM^4}\left(\|n^{0}\|^2_{L^2_y}-G(t)-CM^4\right).
\end{equation}
Then we deduce by \eqref{eq:4.4} that
$$
\|n^0\|^2_{L^2_y}\leq CM^4+3\|n_0\|^2_{L^2}\triangleq B^2_1.
$$
This completes the proof of Lemma \ref{lem:4.1}.
\end{proof}

To improve the Assumptions \ref{assm:2.2}, we are left to complete the $L^\infty L^\infty$ estimate of $n$. This will be achieved by the Moser-Alikakos iteration \cite{Alikakos.1979}.

\begin{lemma}\label{lem:4.2}
Let $n, n^0$ and $n_{\neq}$ are the solutions of Equations \eqref{eq:2.1}-\eqref{eq:2.3} with $n_0$, and satisfy the Assumption {\rm \ref{assm:2.2}}. There exist $A_0=A(n_0)$ and a positive constant $B_2=B(\|n_0\|_{L^2}, M, \|n_0\|_{L^\infty}, B_1,C_{\neq})$, if $A>A_0$, we have
$$
\|n\|_{L^\infty(0, T^\ast;L^\infty})\leq B_2.
$$
\end{lemma}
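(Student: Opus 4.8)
The plan is to run a Moser--Alikakos iteration on the full density $n$, testing the first equation of \eqref{eq:2.1} against $n^{p-1}$ for $p=p_k=2^k$ and letting $k\to\infty$. The base of the iteration is a time-uniform $L^2$ bound: writing $n=n^0+n_{\neq}$, Lemma \ref{lem:4.1} gives $\|n^0\|_{L^\infty(0,T^\ast;L^2_y)}\le B_1$ (hence $\|n^0\|_{L^2}\lesssim B_1$, since $n^0$ depends only on $y$), while Assumption \ref{assm:2.2} (A-2) gives $\|n_{\neq}\|_{L^2}^2\le 4C_{\neq}\|n_0\|_{L^2}^2$; together with the mass conservation $\|n\|_{L^1}=M$ from Proposition \ref{prop:2.1} this yields a bound on $\|n\|_{L^2}$ independent of $t\in[0,T^\ast]$. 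The initial data enter through $\|n_0\|_{L^\infty}$, which controls every $\|n_0\|_{L^{p_k}}$ by interpolation with $\|n_0\|_{L^1}=M$.

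First I would derive the $L^p$ energy identity. Multiplying by $n^{p-1}$ and integrating, the transport term drops: since the coefficients $y,y^2$ are independent of $x_1,x_2$, periodicity gives $\int (y\partial_{x_1}n+y^2\partial_{x_2}n)n^{p-1}\,dx=\frac1p\int(y\partial_{x_1}+y^2\partial_{x_2})(n^p)\,dx=0$, so no boundary terms in $y$ arise. The diffusion produces $\frac{4(p-1)}{Ap^2}\|\nabla(n^{p/2})\|_{L^2}^2$, and for the chemotactic term I would integrate by parts and substitute $-\Delta c=n-c$, so that it contributes $\frac{(p-1)}{Ap}\int n^p(n-c)\,dx$ to the time derivative. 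Since $n\ge0$ forces $c\ge0$ (maximum principle for $(1-\Delta)c=n$), the piece $-\frac{(p-1)}{Ap}\int n^pc$ has a favorable sign and is discarded, leaving the single bad term $\frac{(p-1)}{Ap}\int n^{p+1}$.

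The core estimate is to absorb $\int n^{p+1}=\|n^{p/2}\|_{L^{2(p+1)/p}}^{2(p+1)/p}$ into the dissipation. With $v=n^{p/2}$, the three-dimensional Gagliardo--Nirenberg inequality on $\mathbb{T}^2\times\mathbb{R}$ bounds $\|v\|_{L^{2(p+1)/p}}$ by $\|\nabla v\|_{L^2}^{\theta}\|v\|_{L^1}^{1-\theta}$ (plus a lower-order $\|v\|_{L^1}$ term), and the resulting exponent of $\|\nabla v\|_{L^2}$ is strictly below $2$ once $p\ge4$; Young's inequality then absorbs a controllable multiple into $\frac{c(p-1)}{Ap}\|\nabla v\|_{L^2}^2$. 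Crucially, since the diffusion and the nonlinearity carry the same prefactor $A^{-1}$, the absorption constant is independent of $A$, so no smallness of $A^{-1}$ is needed here — largeness of $A$ is used only where the bootstrap quantities of $n_{\neq}$ enter. A second application of Gagliardo--Nirenberg, interpolating $\|v\|_{L^2}^2=\|n\|_{L^p}^p$ between $\|\nabla v\|_{L^2}$ and $\|v\|_{L^1}=\|n\|_{L^{p/2}}^{p/2}$, converts part of the dissipation into a damping term $-\alpha\|n\|_{L^p}^{p}$, producing a differential inequality of the form $\frac{d}{dt}\|n\|_{L^p}^p\le -\alpha\|n\|_{L^p}^p+C(p)\big(\sup_{[0,t]}\|n\|_{L^{p/2}}^{p/2}\big)^{\gamma}$, whose solution is bounded uniformly in $t$ by $\max\{\|n_0\|_{L^p}^p,\,\tfrac{C(p)}{\alpha}(\cdots)\}$.

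The iteration then proceeds: writing $y_k=\sup_{[0,T^\ast]}\|n\|_{L^{p_k}}$, the preceding step gives a recursion $y_k\le (C\,b^{k})^{1/p_k}\max\{1,y_{k-1}\}^{\rho}$ with $b>1$ and $\rho$ of order one; starting from the uniform $L^2$ bound one checks $\sup_k y_k<\infty$, i.e. $\|n\|_{L^\infty(0,T^\ast;L^\infty)}\le B_2$ with $B_2$ depending only on $\|n_0\|_{L^2},M,\|n_0\|_{L^\infty},B_1,C_{\neq}$ and independent of $T^\ast$. The main obstacle is the bookkeeping that keeps the final constant finite and $T^\ast$-independent: one must track how $C(p_k)$ and the damping coefficient $\alpha(p_k)$ grow with $k$ and verify that the product telescopes, and one must take every intermediate bound as a supremum over $[0,T^\ast]$ (using the damping term) rather than letting it grow in time, so that the resulting $L^\infty$ control can feed back into improving Assumption \ref{assm:2.2}.
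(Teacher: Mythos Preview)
Your proposal is correct and follows the same Moser--Alikakos architecture as the paper, but you treat the chemotactic term differently. The paper stops after one integration by parts and writes the nonlinearity as $\int n^{p_k/2}\,\nabla c\cdot\nabla n^{p_k/2}$; it then invokes the elliptic estimate $\|\nabla c\|_{L^4}\lesssim\|n\|_{L^2}\le K_1$ (Lemma~\ref{lem:A.3} together with \eqref{eq:4.5}) to reduce the term to $K_1\|n^{p_k/2}\|_{L^4}\|\nabla n^{p_k/2}\|_{L^2}$, so that after Gagliardo--Nirenberg and Young the recursion comes out with the clean exponent $y_{k-1}^2$, matching Lemma~\ref{lem:A.1} verbatim. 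You instead integrate by parts once more, substitute $-\Delta c=n-c$, discard $-\int c\,n^p$ by the maximum principle, and are left with $\int n^{p+1}$; your Gagliardo--Nirenberg/Young step then produces a recursion with exponent $\gamma_k=\frac{2p_k-1}{p_k-3}>2$ on $y_{k-1}$. This still telescopes since $\prod_k(\gamma_k/2)<\infty$, but it is not literally Lemma~\ref{lem:A.1} and requires the extra bookkeeping you flag at the end. In short, the paper's route uses the $L^2$ bound \emph{inside} the nonlinear estimate (via the elliptic bound on $\nabla c$) to get a simpler iteration, while yours uses only the sign of $c$ and postpones all quantitative input to the base step; both are standard and lead to the same $B_2$.
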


\begin{proof}
Combining Assumption \ref{assm:2.2} and Lemma \ref{lem:4.1}, we deduce that for any $0\leq t\leq T^\ast$, one has
\begin{equation}\label{eq:4.5}
\|n(t)\|^2_{L^2}\leq CB^2_1+4C_{\neq}\|n_{0}\|^2_{L^2} \triangleq K^2_1.
\end{equation}
Let $p_k=2^{k+1}$ with $k\geq 1$, multiplying both sides of \eqref{eq:2.1} by $n^{p_k-1}$ and integrate over $\mathbb{T}^2\times \mathbb{R}$, one has
\begin{equation}\label{eq:4.6}
\begin{aligned}
\frac{1}{p_k}\frac{d}{dt}\|n\|^{p_k}_{L^{p_k}}&+\frac{4(p_k-1)}{Ap_k^2} \int_{\mathbb{T}^2\times \mathbb{R}}|\nabla n^{\frac{p_k}{2}}|^2dx_1dx_2dy\\
&+\frac{1}{A}\int_{\mathbb{T}^2\times \mathbb{R}}\nabla\cdot(n \nabla c)n^{p_k-1}dx_1dx_2dy=0.
\end{aligned}
\end{equation}
Using integral by part, H\"{o}lder's inequality, Young's inequality, Gagliardo-Nirenberg inequality, Lemma \ref{lem:A.3} and \eqref{eq:4.5}, one gets
$$
\begin{aligned}
\left|\frac{1}{A}\int_{\mathbb{T}^2\times \mathbb{R}}\nabla\cdot(n \nabla c)n^{p_k-1}dx_1dx_2dy\right|&=\left|-\frac{2(p_k-1)}{Ap_k}\int_{\mathbb{T}^2\times \mathbb{R}}n^{\frac{p_k}{2}}\nabla c\cdot \nabla n^{\frac{p_k}{2}}dx_1dx_2dy\right|\\
&\lesssim \frac{K_1}{A}\|n^{\frac{p_k}{2}}\|_{L^{4}}\|\nabla n^{\frac{p_k}{2}}\|_{L^2}\\
&\lesssim \frac{K_1}{A}\|\nabla n^{\frac{p_k}{2}}\|^{\frac{13}{10}}_{L^{2}}\|n^{\frac{p_k}{2}}\|^{\frac{7}{10}}_{L^{1}}\\
&\leq \frac{1}{Ap_k} \|\nabla n^{\frac{p_k}{2}}\|^2_{L^2}+\frac{C(K_1p_k)^{\frac{20}{7}}}{Ap_k}\|n^{\frac{p_k}{2}}\|^{2}_{L^{1}},
\end{aligned}
$$
and
$$
-\frac{4(p_k-1)}{Ap^2_k} \int_{\mathbb{T}^2\times \mathbb{R}}|\nabla n^{\frac{p_k}{2}}|^2dx_1dx_2dy\leq -\frac{2}{Ap_k} \|\nabla n^{\frac{p_k}{2}}\|^2_{L^2}.
$$
By Gagliardo-Nirenberg inequality and Young's inequality, we imply that
$$
-\|\nabla n^{\frac{p_k}{2}}\|^{2}_{L^{2}}\leq -\|n^{\frac{p_k}{2}}\|^{2}_{L^{2}}+C\|n^{\frac{p_k}{2}}\|^{2}_{L^{1}}.
$$
Then we obtain from \eqref{eq:4.6} that
\begin{equation}\label{eq:4.7}
\frac{d}{dt}\|n^{\frac{p_k}{2}}\|^{2}_{L^{2}}\leq -\frac{1}{A}\|n^{\frac{p_k}{2}}\|^{2}_{L^{2}}+\frac{C(K_1p_k)^{\frac{20}{7}}}{A}\|n^{\frac{p_k}{2}}\|^{2}_{L^{1}}.
\end{equation}
Denote
$$
y_k(t)=\|n\|^{p_k}_{L^{p_k}}=\|n^{\frac{p_k}{2}}\|^{2}_{L^{2}},  \ \ \ \|n^{\frac{p_k}{2}}\|^{2}_{L^{1}}=(\|n\|^{p_k-1}_{L^{p_k-1}})^2,\ \ \ y_0(t)=\|n\|^{2}_{L^{2}}\leq  K^2_1,
$$
then we deduce by \eqref{eq:4.7} that
%\begin{equation}\label{eq:4.8}
$$
y'_k(t)\leq -\frac{1}{A}y_k(t)+\frac{C(K_1p_k)^{\frac{20}{7}}}{A}y^2_{k-1}(t).
$$
%\end{equation}
Therefore, we imply that %from \eqref{eq:4.8} that
\begin{equation}\label{eq:4.9}
y_k(t)\leq a_k\max\left\{\sup_{t\geq 0}y^2_{k-1},y_k(0),1\right\},
\end{equation}
where
$$
a_k=CK^{\frac{20}{7}}_1 2^{\frac{20}{7}}2^{\frac{20k}{7}}\geq 1, \ \ \ y_k(0)\lesssim \max\{\|n_0\|_{L^1},\|n_0\|_{L^\infty}\}.
$$
Combining \eqref{eq:4.9} and Moser-Alikakos iteration, we deduce that for any $k\geq1$, one has
$$
\|n\|_{L^{p_k}}\leq \sup_{t\geq 0} y^{\frac{1}{p_k}}_k(0)\leq CK^{\frac{20}{7}}_1 2^{\frac{30}{7}}\max\left\{K^{\frac{1}{2}}_1, \|n_0\|_{L^1}, \|n_0\|_{L^\infty}, 1\right\}\triangleq B_2.
$$
As $k\rightarrow \infty$, we have
$$
\|n\|_{L^{\infty}}\leq  B_2.
$$
This completes the proof of Lemma \ref{lem:4.2}.
\end{proof}

Next, we establish the $L^\infty \dot{H}^1$ estimate of $n^0$.

\vskip .1in

\begin{lemma}[$L^\infty \dot{H}^1$ estimate of $n^0$]\label{lem:4.3}
Let $n^0$ and $n_{\neq}$ are the solutions of Equations \eqref{eq:2.2} and \eqref{eq:2.3} with $n_0$, and satisfy the Assumption {\rm \ref{assm:2.2}}. There exist $A_0=A(n_0)$ and a positive constant $B_3=B(\|\partial_y n_0\|_{L^2},\|n_0\|_{L^2}, M, B_1, B_2, C_{\neq})$, if $A>A_0$, we have
$$
\|\partial_yn^{0}\|_{L^\infty(0, T^\ast;L_y^2})\leq B_3.
$$
\end{lemma}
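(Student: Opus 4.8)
The plan is to run an $\dot H^1$ energy estimate on the one dimensional zero mode equation \eqref{eq:2.2}, one derivative higher than the $L^\infty L_y^2$ bound of Lemma \ref{lem:4.1}, and to close it with a logistic (Riccati type) differential inequality of the same shape as \eqref{eq:4.2}--\eqref{eq:4.4} rather than a bare Grönwall argument. First I would multiply \eqref{eq:2.2} by $-\partial_{yy}n^0$ and integrate over $\mathbb{R}$, which after one integration by parts in the transport and diffusion terms produces
\begin{equation*}
\frac{1}{2}\frac{d}{dt}\|\partial_y n^0\|^2_{L^2_y}+\frac{1}{A}\|\partial_{yy}n^0\|^2_{L^2_y}=\frac{1}{A}\int_{\mathbb{R}}\partial_y(n^0\partial_y c^0)\,\partial_{yy}n^0\,dy+\frac{1}{A}\int_{\mathbb{R}}\big(\nabla\cdot(n_{\neq}\nabla c_{\neq})\big)^0\partial_{yy}n^0\,dy.
\end{equation*}
The leading dissipation $\tfrac1A\|\partial_{yy}n^0\|^2_{L^2_y}$ is the quantity I will exploit both to absorb top order contributions and, via Gagliardo--Nirenberg, to generate coercivity in $\|\partial_y n^0\|_{L^2_y}$.

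For the chemotaxis term I would substitute $\partial_{yy}c^0=c^0-n^0$ from the elliptic relation in \eqref{eq:2.2}, writing $\partial_y(n^0\partial_y c^0)=\partial_y n^0\,\partial_y c^0+n^0(c^0-n^0)$. Each resulting piece is paired with $\partial_{yy}n^0$ and estimated by H\"older and Young's inequalities, using $\|\partial_y c^0\|_{L^\infty_y}$ and $\|c^0\|_{L^\infty_y}$ controlled by $\|n^0\|_{L^2_y}\le B_1$ (Lemma \ref{lem:A.2}--\ref{lem:A.3}) together with $\|n^0\|_{L^\infty_y}\le B_2$ from Lemma \ref{lem:4.2}. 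The top order factor $\|\partial_{yy}n^0\|_{L^2_y}$ is absorbed into the dissipation, leaving lower order remainders of the form $\tfrac{C}{A}C(B_1,B_2)\|\partial_y n^0\|^2_{L^2_y}+\tfrac{C}{A}C(B_1,B_2)$.

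The main obstacle is the coupling term. Since averaging over $\mathbb{T}^2$ kills the $x_1,x_2$ derivatives, I would use $\big(\nabla\cdot(n_{\neq}\nabla c_{\neq})\big)^0=\partial_y\big((n_{\neq}\partial_y c_{\neq})^0\big)$ and bound
\begin{equation*}
\frac1A\Big|\int_{\mathbb{R}}\partial_y\big((n_{\neq}\partial_y c_{\neq})^0\big)\partial_{yy}n^0\,dy\Big|\le\frac{1}{4A}\|\partial_{yy}n^0\|^2_{L^2_y}+\frac{C}{A}\big\|\partial_y(n_{\neq}\partial_y c_{\neq})\big\|^2_{L^2}.
\end{equation*}
The delicate factor is $\|\partial_y n_{\neq}\,\partial_y c_{\neq}\|_{L^2}$ arising from the product rule, which I would control by $\|\nabla c_{\neq}\|_{L^\infty}\|\nabla n_{\neq}\|_{L^2}$, estimating $\|\nabla c_{\neq}\|_{L^\infty}\lesssim\|n_{\neq}\|_{L^p}$ for $p>3$ through the multiplier $\nabla(-\Delta+1)^{-1}$ and Sobolev embedding, and then interpolating $\|n_{\neq}\|_{L^p}$ between the decaying $\|n_{\neq}\|_{L^2}$ of (A-2) and $\|n_{\neq}\|_{L^\infty}\le CB_2$ from Lemma \ref{lem:4.2}. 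The remaining factor $n_{\neq}\partial_{yy}c_{\neq}$ is handled by $\|\partial_{yy}c_{\neq}\|_{L^2}\lesssim\|n_{\neq}\|_{L^2}$ (elliptic regularity) and $\|n_{\neq}\|_{L^\infty}\le CB_2$. This shows the coupling contributes a forcing dominated by $\tfrac{1}{A}\|\nabla n_{\neq}\|^2_{L^2}$ plus exponentially decaying terms, whose time integral is bounded by Assumption \ref{assm:2.2} and, for the decaying part, tends to zero as $A\to\infty$ since $\tfrac{1}{A\lambda_A}=\tfrac{1}{\epsilon_0 A^{1/2}}$, exactly as in the treatment of $G(t)$ in Lemma \ref{lem:4.1}.

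To close the estimate I would convert the dissipation into coercivity via the one dimensional Gagliardo--Nirenberg inequality $\|\partial_y n^0\|^2_{L^2_y}\le C\|n^0\|_{L^2_y}\|\partial_{yy}n^0\|_{L^2_y}$ and $\|n^0\|_{L^2_y}\le B_1$, which gives $-\tfrac1A\|\partial_{yy}n^0\|^2_{L^2_y}\le-\tfrac{c}{AB_1^2}\|\partial_y n^0\|^4_{L^2_y}$. Defining an auxiliary function $G(t)$ as the time integral of the coupling forcing (uniformly bounded for $A$ large by the previous paragraph) and absorbing the quadratic chemotaxis remainder into the quartic dissipation by Young's inequality, I expect a logistic inequality of the shape
\begin{equation*}
\frac{d}{dt}\big(\|\partial_y n^0\|^2_{L^2_y}-G(t)\big)\le-\frac{c}{AB_1^2}\|\partial_y n^0\|^2_{L^2_y}\Big(\|\partial_y n^0\|^2_{L^2_y}-G(t)-C(B_1,B_2)\Big).
\end{equation*}
Comparison with this ODE, precisely as in Lemma \ref{lem:4.1}, yields the uniform bound $\|\partial_y n^0\|^2_{L^2_y}\le C(B_1,B_2)+3\|\partial_y n_0\|^2_{L^2}=:B_3^2$, independent of $A$ and of $t\in[0,T^\ast]$, completing the proof.
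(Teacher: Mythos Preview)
Your proposal is correct and follows essentially the same architecture as the paper's proof: an $\dot H^1$ energy identity on \eqref{eq:2.2} (the paper differentiates and tests against $\partial_y n^0$, you test against $-\partial_{yy}n^0$, which is the same after integration by parts), absorption of the top order terms into $\tfrac1A\|\partial_{yy}n^0\|^2_{L^2_y}$, a bounded auxiliary $G(t)$ collecting the $n_{\neq}$ forcing via Assumption~\ref{assm:2.2} and Lemma~\ref{lem:4.2}, and finally the Gagliardo--Nirenberg coercivity $-\|\partial_{yy}n^0\|^2_{L^2_y}\le -c\|\partial_y n^0\|^4_{L^2_y}/B_1^2$ leading to a logistic inequality. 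The only cosmetic differences are that the paper estimates the $n^0\partial_{yy}c^0$ piece via $\|n^0\|_{L^4_y}\|\partial_{yy}c^0\|_{L^4_y}$ and one dimensional Gagliardo--Nirenberg with the $L^1$ mass (producing a purely quadratic remainder $\tfrac{C(B_1^2+M^2)}{A}\|\partial_y n^0\|^2_{L^2_y}$), whereas you invoke $\|n^0\|_{L^\infty}\le B_2$ and obtain an additional constant remainder $\tfrac{C}{A}C(B_1,B_2)$; this extra constant is harmless for the logistic comparison argument, merely shifting the absorbing threshold.
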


\begin{proof}
Applying $\partial_y$ to \eqref{eq:2.2}, to obtain
\begin{equation}\label{eq:4.10}
\partial_t\partial_yn^{0}-\frac{1}{A}\partial_{yy}\partial_yn^{0}+\frac{1}{A}\partial_y\partial_y(n^{0} \partial_y c^0)+\frac{1}{A}\partial_y\left(\nabla\cdot(n_{\neq} \nabla c_{\neq})\right)^{0}=0.
\end{equation}
Let us multiply both sides of \eqref{eq:4.10} by $\partial_yn^{0}$ and integrate  over $\mathbb{R}$, to obtain
\begin{equation}\label{eq:4.11}
\begin{aligned}
\frac{1}{2}\frac{d}{dt}\|\partial_yn^{0}\|^2_{L^2_y}+\frac{1}{A}\|\partial_{yy}n^{0}\|^2_{L^2_y}&+\frac{1}{A}
\int_{\mathbb{R}}\partial_y\partial_y(n^{0} \partial_y c^0)\partial_yn^{0}dy\\
&+\frac{1}{A}\int_{\mathbb{R}}\partial_y\left(\nabla\cdot(n_{\neq} \nabla c_{\neq})\right)^{0}\partial_yn^{0}dy=0.
\end{aligned}
\end{equation}
By  H\"{o}lder's inequality, Young's inequality and Lemma \ref{lem:A.2}, one gets
\begin{equation}\label{eq:4.12}
\begin{aligned}
&\left|\frac{1}{A}\int_{\mathbb{R}}\partial_y\partial_y(n^{0} \partial_y c^0)\partial_yn^{0}dy\right|\\
=&\left|-\frac{1}{A}\int_{\mathbb{R}}\partial_yn^{0} \partial_y c^0\partial_{yy}n^{0}dy-\frac{1}{A}\int_{\mathbb{R}}n^{0} \partial_{yy} c^0\partial_{yy}n^{0}dy\right|\\
\lesssim&\frac{1}{A}\|\partial_yn^{0}\|_{L_y^2}\|\partial_y c^0\|_{L_y^\infty}\|\partial_{yy}n^{0}\|_{L_y^2}+\frac{1}{A}\|n^{0}\|_{L_y^4}\|\partial_{yy} c^0\|_{L_y^4}\|\partial_{yy}n^{0}\|_{L_y^2}\\
\leq& \frac{1}{4A}\|\partial_{yy}n^{0}\|^2_{L_y^2}+\frac{C(B^2_1+M^2)}{A}\|\partial_yn^{0}\|^2_{L_y^2}.
\end{aligned}
\end{equation}
Combining Gagliardo-Nirenberg inequality and Lemma \ref{lem:A.3}, one has
$$
\|\left(\partial_yn_{\neq}\partial_y c_{\neq}\right)^{0}\|_{L^2_y}=\left(\int_{\mathbb{R}}\left| \frac{1}{|\mathbb{T}^2|}\int_{\mathbb{T}^2}\partial_yn_{\neq}\partial_y c_{\neq}dx_1dx_2\right|^2dy\right)^{\frac{1}{2}}\lesssim\|\partial_yn_{\neq}\partial_y c_{\neq}\|_{L^2},
$$
$$
\|\left(n_{\neq}\partial_{yy} c_{\neq}\right)^{0}\|_{L^2_y}\lesssim\|n_{\neq}\partial_{yy} c_{\neq}\|_{L^2},
$$
$$
\|\partial_yc_{\neq}\|_{L^\infty}\lesssim\|\nabla c_{\neq}\|^{\frac{1}{7}}_{L^2}\|\nabla^2 c_{\neq}\|^{\frac{6}{7}}_{L^4}
\lesssim\|n_{\neq}\|^{\frac{1}{7}}_{L^2}\|n_{\neq}\|_{L^4}^{\frac{6}{7}}.
$$
Then we deduce that
\begin{equation}\label{eq:4.13}
\begin{aligned}
&\left|\frac{1}{A}\int_{\mathbb{R}}\partial_y\left(\nabla\cdot(n_{\neq} \nabla c_{\neq})\right)^{0}\partial_yn^{0}dy\right|\\
\lesssim&\frac{1}{A}\|\left(\partial_yn_{\neq}\partial_y c_{\neq}\right)^{0}\|_{L^2_y}\|\partial_{yy}n^{0}\|_{L^2_y}+\frac{1}{A}\|\left(n_{\neq}\partial_{yy} c_{\neq}\right)^{0}\|_{L^2_y}\|\partial_{yy}n^{0}\|_{L^2_y}\\
\leq&\frac{1}{4A}\|\partial_{yy}n^{0}\|^2_{L^2_y}+ \frac{C}{A}\|\partial_yn_{\neq}\|^2_{L^2}\|n_{\neq}\|^{\frac{2}{7}}_{L^2}\|n_{\neq}\|_{L^4}^{\frac{12}{7}}
+\frac{C}{A}\|n_{\neq}\|^2_{L^\infty}\|n_{\neq}\|^2_{L^2}.
\end{aligned}
\end{equation}
Combining  \eqref{eq:4.11}-\eqref{eq:4.13}, we have
\begin{equation}\label{eq:4.14}
\begin{aligned}
\frac{d}{dt}\|\partial_yn^{0}\|^2_{L^2_y}\leq &-\frac{1}{A}\|\partial_{yy}n^{0}\|^2_{L^2_y}+\frac{C(B^2_1+M^2)}{A}\|\partial_yn^{0}\|^2_{L_y^2}\\
&+\frac{C}{A}\|\partial_yn_{\neq}\|^2_{L^2}\|n_{\neq}\|^{\frac{2}{7}}_{L^2}\|n_{\neq}
\|_{L^4}^{\frac{12}{7}}+\frac{C}{A}\|n_{\neq}\|^2_{L^\infty}\|n_{\neq}\|^2_{L^2}.
\end{aligned}
\end{equation}
Define
$$
G(t)=\int_{0}^t\frac{C}{A}\|\partial_yn_{\neq}\|^2_{L^2}\|n_{\neq}\|^{\frac{2}{7}}_{L^2}\|n_{\neq}\|_{L^4}^{\frac{12}{7}}
+\frac{C}{A}\|n_{\neq}\|^2_{L^\infty}\|n_{\neq}\|^2_{L^2}ds.
$$
By Interpolation inequality, Assumption \ref{assm:2.2} and Lemma \ref{lem:4.2}, one has
$$
\begin{aligned}
\frac{C}{A}\|\partial_yn_{\neq}\|^2_{L^2}\|n_{\neq}\|^{\frac{2}{7}}_{L^2}\|n_{\neq}\|_{L^4}^{\frac{12}{7}}&\leq \frac{C}{A}\|\partial_yn_{\neq}\|^2_{L^2}\|n_{\neq}\|^{\frac{8}{7}}_{L^2}\|n_{\neq}\|_{L^\infty}^{\frac{6}{7}}\\
&\leq \frac{CB^{\frac{6}{7}}_2\left(4C_{\neq}\|n_0\|^2_{L^2}\right)^{\frac{4}{7}}}{A}\|\nabla n_{\neq}\|^2_{L^2},
\end{aligned}
$$
and
$$
\frac{C}{A}\|n_{\neq}\|^2_{L^\infty}\|n_{\neq}\|^2_{L^2}\leq \frac{CB^2_2}{A}\|n_{\neq}\|^2_{L^2}.
$$
Then we deduce by Assumption \ref{assm:2.2} that
$$
\begin{aligned}
G(t)&\leq \frac{CB^{\frac{6}{7}}_2\left(4C_{\neq}\|n_0\|^2_{L^2}\right)^{\frac{4}{7}}}{A}\int_{0}^t\|\nabla n_{\neq}\|^2_{L^2}ds+\frac{CB^2_2}{A}\int_{0}^t\|n_{\neq}\|^2_{L^2}ds\\
&\leq CB^{\frac{6}{7}}_2C^{\frac{11}{7}}_{\neq}\|n_{0}\|^\frac{22}{7}_{L^2}+CB^2_2 C_{\neq}\|n_0\|^2_{L^2}(\epsilon_0A^{\frac{1}{2}})^{-1}.
\end{aligned}
$$
Thus, for any $A$ is large enough, one has
\begin{equation}\label{eq:4.15}
\begin{aligned}
0\leq G(t)\leq 2CB^{\frac{6}{7}}_2C^{\frac{11}{7}}_{\neq}\|n_{0}\|^\frac{22}{7}_{L^2}.
\end{aligned}
\end{equation}
By Gagliardo-Nirenberg inequality, one gets
\begin{equation}\label{eq:4.16}
-\|\partial_{yy}n^0\|^{2}_{L^2_y}\leq -\frac{\|\partial_yn^0\|^4_{L^2_y}}{C\|n^0\|^{2}_{L^2_y}}\leq -\frac{\|\partial_yn^0\|^4_{L^2_y}}{CB^2_1}.
\end{equation}
Combining to \eqref{eq:4.14}-\eqref{eq:4.16}, we have
\begin{equation}\label{eq:4.17}
\begin{aligned}
\frac{d}{dt}\left(\|\partial_yn^{0}\|^2_{L^2_y}-G(t)\right)&\leq -\frac{\|\partial_yn^0\|^4_{L^2_y}}{CAB^2_1}+\frac{C(B^2_1+M^2)}{A}\|\partial_yn^{0}\|^2_{L_y^2}\\
&\leq -\frac{\|\partial_yn^0\|^2_{L^2_y}}{CA(B^2_1+M^2)}\left(\|\partial_yn^0\|^2_{L^2_y}-G(t)-C(B^2_1+M^2)^2\right).
\end{aligned}
\end{equation}
Combining to \eqref{eq:4.15} and \eqref{eq:4.17}, to obtain
$$
\|\partial_yn^0\|^2_{L^2_y}\leq \|\partial_yn_0\|^2_{L^2_y}+C(B^2_1+M^2)^2+4CB^{\frac{6}{7}}_2C^{\frac{11}{7}}_{\neq}\|n_{0}\|^\frac{22}{7}_{L^2}\triangleq B^2_3.
$$
This completes the proof of Lemma \ref{lem:4.3}.
\end{proof}

\vskip .05in

Finally, we  prove the Proposition \ref{prop:2.3} by enhanced dissipation effect of non-shear flow.

\begin{proof}[The proof of Proposition {\rm \ref{prop:2.3}}]
We will prove (P-1) and (P-2) in the Proposition {\rm \ref{prop:2.3}} respectively.
\vskip .05in
\noindent (1) {\bf Nonzero mode $L^2\dot{H}^1$ estimate of $n_{\neq}$}. Let us multiply both sides of \eqref{eq:2.3} by $n_{\neq}$ and integrate over $\mathbb{T}^2\times \mathbb{R}$, we obtain
\begin{equation}\label{eq:4.18}
\begin{aligned}
&\frac{1}{2}\frac{d}{dt}\|n_{\neq}\|^2_{L^2}+\frac{1}{A}\|\nabla n_{\neq}\|^2_{L^2}+\int_{\mathbb{T}^2\times \mathbb{R}} y\partial_{x_1}n_{\neq}n_{\neq}dx_1dx_2dy\\
&\ \ \ +\int_{\mathbb{T}^2\times \mathbb{R}} y^2\partial_{x_2}n_{\neq}n_{\neq}dx_1dx_2dy
+\frac{1}{A}\int_{\mathbb{T}^2\times \mathbb{R}}F(c^0,c_{\neq},n^0,n_{\neq})n_{\neq}dx_1dx_2dy=0,
\end{aligned}
\end{equation}
where
\begin{equation}\label{eq:4.19}
\begin{aligned}
F(c^0,c_{\neq},n^0,n_{\neq})&=\nabla n^{0}\cdot\nabla c_{\neq}+\nabla n_{\neq}\cdot\nabla c^{0}
+\left(\nabla\cdot(n_{\neq} \nabla c_{\neq})\right)_{\neq}\\
&\ \ \ \ -n^{0}(n_{\neq}-c_{\neq})-n_{\neq}(n^0-c^0).
\end{aligned}
\end{equation}
Obviously, we deduce by integral by part that
$$
\int_{\mathbb{T}^2\times \mathbb{R}} y\partial_{x_1}n_{\neq}n_{\neq}dx_1dx_2dy=0,
$$
and
$$
\int_{\mathbb{T}^2\times \mathbb{R}} y^2\partial_{x_2}n_{\neq}n_{\neq}dx_1dx_2dy=0.
$$
Combining H\"{o}lder's inequality, Young's inequality, Gagliardo-Nirenberg inequality, Lemma \ref{lem:A.2} and \ref{lem:A.3}, one gets
$$
\begin{aligned}
\left|\frac{1}{A}\int_{\mathbb{T}^2\times \mathbb{R}}\nabla n^{0}\cdot\nabla c_{\neq}n_{\neq}dx_1dx_2dy\right|&
\lesssim \frac{1}{A}\|\partial_yn^{0}\|_{L_{y}^2}\|\partial_yc_{\neq}\|_{L^4}\|n_{\neq}\|_{L^4}\\
&\leq\frac{1}{8A}\|\nabla n_{\neq}\|^{2}_{L^2}+\frac{C}{A}\|\partial_yn^{0}\|^{\frac{8}{5}}_{L^2_y}\|n_{\neq}\|^{2}_{L^2},
\end{aligned}
$$
$$
\begin{aligned}
\left|\frac{1}{A}\int_{\mathbb{T}^2\times \mathbb{R}}\nabla n_{\neq}\cdot\nabla c^{0}n_{\neq}dx_1dx_2dy\right|&\lesssim \frac{1}{A}\|\nabla n_{\neq}\|_{L^2}\|\partial_y c^0\|_{L_y^\infty}\|n_{\neq}\|_{L^2}\\
&\leq \frac{1}{8A}\|\nabla n_{\neq}\|^2_{L^2}+\frac{C}{A}\|n^0\|^2_{L_y^2}\|n_{\neq}\|^2_{L^2},
\end{aligned}
$$
$$
\begin{aligned}
\left|\frac{1}{A}\int_{\mathbb{T}^2\times \mathbb{R}}\left(\nabla\cdot(n_{\neq} \nabla c_{\neq})\right)_{\neq}n_{\neq}dx_1dx_2dy\right|
\lesssim& \frac{1}{A} \left(\|\nabla n_{\neq} \cdot\nabla c_{\neq}\|_{L^\frac{4}{3}}+\|n_{\neq}\nabla^2c_{\neq}\|_{L^\frac{4}{3}}\right)\|n_{\neq}\|_{L^4}\\
\lesssim &\frac{1}{A}\|\nabla n_{\neq}\|_{L^2}\|n_{\neq}\|_{L^2}\|n_{\neq}\|_{L^4}+ \frac{C}{A}\|n_{\neq}\|^2_{L^4}\|n_{\neq}\|_{L^2}\\
\lesssim &\frac{1}{A}\|\nabla n_{\neq}\|^{\frac{7}{4}}_{L^2}\|n_{\neq}\|^{\frac{5}{4}}_{L^2}+\frac{C}{A}\|\nabla n_{\neq}\|^{\frac{3}{2}}_{L^2}\|n_{\neq}\|^{\frac{3}{2}}_{L^2}\\
\leq &\frac{1}{4A}\|\nabla n_{\neq}\|^2_{L^2}+\frac{C}{A}\|n_{\neq}\|^{10}_{L^2}+\frac{C}{A}\|n_{\neq}\|^{6}_{L^2},
\end{aligned}
$$
$$
\left|\frac{1}{A} \int_{\mathbb{T}^2\times \mathbb{R}}n^{0}(n_{\neq}-c_{\neq})n_{\neq}dx_1dx_2dy\right|\lesssim \frac{1}{A}\|n^{0}\|_{L_{y}^\infty}\|n_{\neq}\|^2_{L^2},
$$
and
$$
\left|\frac{1}{A} \int_{\mathbb{T}^2\times \mathbb{R}} n_{\neq}(n^0-c^0)n_{\neq}dx_1dx_2dy\right|\lesssim \frac{1}{A}\|n^{0}\|_{L_{y}^\infty}\|n_{\neq}\|^2_{L^2}.
$$
Then we deduce by \eqref{eq:4.18} that
\begin{equation}\label{eq:4.20}
\begin{aligned}
\frac{d}{dt}\|n_{\neq}\|^2_{L^2}+\frac{1}{A}\|\nabla n_{\neq}\|^2_{L^2}\lesssim & \frac{1}{A}\left(\|\partial_yn^{0}\|^{\frac{8}{5}}_{L^2_y}+\|n^0\|^2_{L_y^2}+\|n^{0}\|_{L_{y}^\infty}\right)\|n_{\neq}\|^2_{L^2}\\
&\ \ \ +\frac{1}{A}\|n_{\neq}\|^{10}_{L^2}+\frac{1}{A}\|n_{\neq}\|^{6}_{L^2}.
\end{aligned}
\end{equation}
Combining Lemma \ref{lem:4.1}-\ref{lem:4.3} and Assumption \ref{assm:2.2}, one has
$$
\frac{C}{A}\left(\|\partial_yn^{0}\|^{\frac{8}{5}}_{L^2_y}+\|n^0\|^2_{L_y^2}+\|n^{0}\|_{L_{y}^\infty}\right)\|n_{\neq}\|^2_{L^2}\\
\lesssim \frac{4C_{\neq}\left(B_3^{\frac{8}{5}}+B_1^2+B_2\right)}{A}e^{-\lambda_A t}\|n_{0}\|^2_{L^2},
$$
and
$$
\frac{C}{A}\|n_{\neq}\|^{10}_{L^2}+\frac{C}{A}\|n_{\neq}\|^{6}_{L^2}\lesssim\frac{(4C_{\neq})^5}{A}e^{-5\lambda_A t}\|n_{0}\|^{10}_{L^2}+\frac{(4C_{\neq})^3}{A}e^{-3\lambda_A t}\|n_{0}\|^{6}_{L^2}.
$$
Then by time integral in $[s,t]$ and $A$ is large enough, one has
\begin{equation}\label{eq:4.21}
\begin{aligned}
&\int_{s}^t\frac{C}{A}\left(\|\partial_yn^{0}\|^{\frac{8}{5}}_{L^2_y}+\|n^0\|^2_{L_y^2}+\|n^{0}\|_{L_{y}^\infty}\right)
\|n_{\neq}\|^2_{L^2}d\tau\\
\lesssim& \frac{4C_{\neq}\left(B_3^{\frac{8}{5}}+B_1^2+B_2\right)}{\epsilon_0A^{\frac{1}{2}}}e^{-\lambda_A s}\|n_{0}\|^2_{L^2}\\
\leq &2C_{\neq}e^{-\lambda_A s}\|n_{0}\|^2_{L^2},
\end{aligned}
\end{equation}
and
\begin{equation}\label{eq:4.22}
\begin{aligned}
&\int_{s}^t\frac{C}{A}\|n_{\neq}\|^{10}_{L^2}+\frac{C}{A}\|n_{\neq}\|^{6}_{L^2}d\tau\\
\lesssim& \left(\frac{(4C_{\neq})^5\|n_{0}\|^{8}_{L^2}}{5\varepsilon_0 A^{\frac{1}{2}}}+\frac{(4C_{\neq})^3\|n_{0}\|^{4}_{L^2}}{3\varepsilon_0 A^{\frac{1}{2}}} \right)e^{-\lambda_A s}\|n_{0}\|^{2}_{L^2}\\
\leq &2C_{\neq}e^{-\lambda_A s}\|n_{0}\|^2_{L^2}.
\end{aligned}
\end{equation}
Combining \eqref{eq:4.20}-\eqref{eq:4.22} and Assumption \ref{assm:2.2}, one has
$$
\begin{aligned}
&\|n_{\neq}(t)\|^2_{L^2}+\frac{1}{A}\int_{s}^t\|\nabla n_{\neq}\|^2_{L^2}d\tau\\
\leq & \int_{s}^t\frac{C}{A}\left(\|\partial_yn^{0}\|^{\frac{8}{5}}_{L^2_y}+\|n^0\|^2_{L_y^2}+\|n^{0}\|_{L_{y}^\infty}\right)
\|n_{\neq}\|^2_{L^2}d\tau\\
&+\int_{s}^t\frac{C}{A}\|n_{\neq}\|^{10}_{L^2}+\frac{C}{A}\|n_{\neq}\|^{6}_{L^2}d\tau+\|n_{\neq}(s)\|^2_{L^2}\\
\leq &8C_{\neq}e^{-\lambda_A s}\|n_{0}\|^2_{L^2},
\end{aligned}
$$
Then we have
$$
\frac{1}{A}\int_{s}^t\|\nabla n_{\neq}\|^2_{L^2}d\tau\leq 8C_{\neq}e^{-\lambda_A s}\|n_{0}\|^2_{L^2}.
$$
\vskip .05in
\noindent (2) {\bf Nonzero mode $L^\infty L^2$ estimate of $n_{\neq}$}. Denote
$$
\mathcal{S}_{t}=e^{-tL_A},
$$
and combining Duhamel's principle, \eqref{eq:2.3} and \eqref{eq:4.19}, one has
$$
n_{\neq}(s+t)=\mathcal{S}_{t}n_{\neq}(s)
-\frac{1}{A}\int_{s}^{s+t}\mathcal{S}_{t+s-\tau}F(c^0,c_{\neq},n^0,n_{\neq})d\tau.
$$
Then we have
\begin{equation}\label{eq:4.23}
\begin{aligned}
\|n_{\neq}(t+s)\|_{L^2}&\leq \|\mathcal{S}_{t}n_{\neq}(s)\|_{L^2}+\frac{C}{A}\int_{s}^{s+t}\|\partial_yn^{0}\partial_yc_{\neq}\|_{L^2}+\|\partial_yn_{\neq}\partial_y c^{0}\|_{L^2}\\
&+\|\left(\nabla\cdot(n_{\neq} \nabla c_{\neq})\right)_{\neq}\|_{L^2}+\|n^{0}(n_{\neq}-c_{\neq})\|_{L^2}+\|n_{\neq}(n^0-c^0)\|_{L^2}d\tau.
\end{aligned}
\end{equation}
By the Theorem \ref{thm:1.0}, one gets
\begin{equation}\label{eq:4.24}
\|\mathcal{S}_{t}n_{\neq}(s+t)\|_{L^2}\lesssim e^{-\lambda_A t}\|n_{\neq}(s)\|_{L^2}.
\end{equation}
Combining H\"{o}lder's inequality, Gagliardo-Nirenberg inequality and Lemma \ref{lem:A.3}, one has
$$
\|\partial_yn^{0}\partial_yc_{\neq}\|_{L^2}\lesssim \|\partial_yn^{0}\|_{L^2_y}\|n_{\neq}\|_{L^2}+\|\partial_yn^{0}\|_{L^2_y}\|n_{\neq}\|^{\frac{1}{4}}_{L^2}\|\nabla n_{\neq}\|_{L^2}^{\frac{3}{4}}.
$$
By H\"{o}lder's inequality, Lemma \ref{lem:4.3} and Assumption \ref{assm:2.2}, one gets
$$
\frac{C}{A}\int_{s}^{s+t}\|\partial_yn^{0}\|_{L^2_y}\|n_{\neq}\|_{L^2}d\tau\lesssim \frac{B_3C^{\frac{1}{2}}_{\neq}t}{A}e^{-\frac{\lambda_A}{2} s}\|n_0\|_{L^2},
$$
and
$$
\begin{aligned}
&\frac{C}{A}\int_{s}^{s+t}\|\partial_yn^{0}\|_{L^2_y}\|n_{\neq}\|^{\frac{1}{4}}_{L^2}\|\nabla n_{\neq}\|_{L^2}^{\frac{3}{4}}d\tau\\
\lesssim& \left( \frac{1}{A}\int_{s}^{s+t}\|\partial_yn^{0}\|^{\frac{8}{5}}_{L^2_y}d\tau \right)^{\frac{5}{8}}\left( \frac{1}{A}\int_{s}^{s+t}\|n_{\neq}\|^{\frac{2}{3}}_{L^2}\|\nabla n_{\neq}\|_{L^2}^{2}d\tau \right)^{\frac{3}{8}}\\
\lesssim& B_3\left(\frac{t}{A}\right)^{\frac{5}{8}}\left(4C_{\neq}e^{-\lambda_As}\|n_0\|^2_{L^2}\right)^{\frac{1}{8}}\left( \frac{1}{A}\int_{s}^{s+t}\|\nabla n_{\neq}\|_{L^2}^{2}d\tau \right)^{\frac{3}{8}}\\
\lesssim& B_3C^{\frac{1}{2}}_{\neq}\left(\frac{t}{A}\right)^{\frac{5}{8}}e^{-\frac{\lambda_A}{2}s}\|n_0\|_{L^2}.
\end{aligned}
$$
Then we have
\begin{equation}\label{eq:4.25}
\frac{C}{A}\int_{s}^{s+t}\|\partial_yn^{0}\partial_yc_{\neq}\|_{L^2}d\tau\lesssim \left(\frac{B_3C^{\frac{1}{2}}_{\neq}t}{A}+B_3C^{\frac{1}{2}}_{\neq}
\left(\frac{t}{A}\right)^{\frac{5}{8}}\right)e^{-\frac{\lambda_A}{2}s}\|n_0\|_{L^2}.
\end{equation}
Combining H\"{o}lder's inequality, Lemma \ref{lem:4.1}, Lemma \ref{lem:A.2} and Assumption \ref{assm:2.2}, one has
\begin{equation}\label{eq:4.26}
\begin{aligned}
\frac{C}{A}\int_{s}^{s+t}\|\partial_yn_{\neq}\partial_y c^{0}\|_{L^2}d\tau&\lesssim\frac{1}{A}\int_{s}^{s+t}\|\nabla n_{\neq}\|_{L^2}\|n^{0}\|_{L_y^2}d\tau\\
&\lesssim \left(\frac{1}{A}\int_{s}^{s+t}\|n^{0}\|^2_{L_y^2}d\tau\right)^{\frac{1}{2}}\left(\frac{1}{A}\int_{s}^{s+t}\|\nabla n_{\neq}\|^2_{L^2}d\tau\right)^{\frac{1}{2}}\\
&\lesssim B_1C^{\frac{1}{2}}_{\neq}\left(\frac{t}{A}\right)^{\frac{1}{2}}e^{-\frac{\lambda_A}{2}s}\|n_0\|_{L^2}.
\end{aligned}
\end{equation}
By H\"{o}lder's inequality, Gagliardo-Nirenberg inequality and Lemma \ref{lem:A.3}, one has
$$
\|\left(\nabla\cdot(n_{\neq} \nabla c_{\neq})\right)_{\neq}\|_{L^2}
\lesssim\|\nabla n_{\neq}\|^{\frac{7}{4}}_{L^2}\|n_{\neq}\|^{\frac{1}{4}}_{L^2}+\|\nabla n_{\neq}\|_{L^2}\|n_{\neq}\|_{L^2}+\|n_{\neq}\|_{L^\infty}\|n_{\neq}\|_{L^2},
$$
then we deduce by H\"{o}lder's inequality, Lemma \ref{lem:4.2} and Assumption \ref{assm:2.2} that
\begin{equation}\label{eq:4.27}
\begin{aligned}
&\frac{C}{A}\int_{s}^{s+t}\|\left(\nabla\cdot(n_{\neq} \nabla c_{\neq})\right)_{\neq}\|_{L^2}d\tau\\
\lesssim& \left(C_{\neq}\|n_0\|_{L^2}\left(\frac{t}{A}\right)^{\frac{1}{8}}
+C_{\neq}\|n_0\|_{L^2}\left(\frac{t}{A}\right)^{\frac{1}{2}}
+\epsilon_0^{-1}B_2C^{\frac{1}{2}}_{\neq}\left(\frac{t}{A}\right)^{\frac{1}{2}}\right)
e^{-\frac{\lambda_A}{2}s}\|n_0\|_{L^2}.
\end{aligned}
\end{equation}
By similar to argument, one has
\begin{equation}\label{eq:4.28}
\frac{C}{A}\int_{s}^{s+t}\|n^{0}(n_{\neq}-c_{\neq})\|_{L^2}d\tau
\lesssim\frac{B_2(C_{\neq})^{\frac{1}{2}}}{\epsilon_0A^{\frac{1}{2}}}e^{-\frac{\lambda_A}{2}s}\|n_0\|_{L^2},
\end{equation}
and
\begin{equation}\label{eq:4.29}
\frac{C}{A}\int_{s}^{s+t}\|n_{\neq}(n^0-c^0)\|_{L^2}d\tau
\lesssim B^{\frac{1}{4}}_3B^{\frac{3}{4}}_1C^{\frac{1}{2}}_{\neq}
\left(\frac{t}{A}\right)^{\frac{5}{8}}e^{-\frac{\lambda_A}{2}s}\|n_0\|_{L^2}.
\end{equation}
Combining \eqref{eq:4.23}-\eqref{eq:4.29}, we have
\begin{equation}\label{eq:4.30}
\begin{aligned}
&\|n_{\neq}(t+s)\|_{L^2}\\
 \lesssim& e^{-\lambda_A t+\pi/2}\|n_{\neq}(s)\|_{L^2}
+\left(\frac{B_3C^{\frac{1}{2}}_{\neq}t}{A}+B_3C^{\frac{1}{2}}_{\neq}\left(\frac{t}{A}\right)^{\frac{5}{8}}
+B_1C^{\frac{1}{2}}_{\neq}\left(\frac{t}{A}\right)^{\frac{1}{2}}\right)e^{-\frac{\lambda_A}{2}s}\|n_0\|_{L^2}\\
&+ \left(C_{\neq}\|n_0\|_{L^2}\left(\frac{t}{A}\right)^{\frac{1}{8}}
+C_{\neq}\|n_0\|_{L^2}\left(\frac{t}{A}\right)^{\frac{1}{2}}
+\epsilon_0^{-1}B_2C^{\frac{1}{2}}_{\neq}\left(\frac{t}{A}\right)^{\frac{1}{2}}\right)
e^{-\frac{\lambda_A}{2}s}\|n_0\|_{L^2}\\
&+\left(\frac{B_2(C_{\neq})^{\frac{1}{2}}}{\epsilon_0A^{\frac{1}{2}}}+B^{\frac{1}{4}}_3B^{\frac{3}{4}}_1C^{\frac{1}{2}}_{\neq}
\left(\frac{t}{A}\right)^{\frac{5}{8}}\right)e^{-\frac{\lambda_A}{2}s}\|n_0\|_{L^2}.
\end{aligned}
\end{equation}
%We choose $N_0\in \mathbb{Z}^{+}$ is large and define
Taking $\tau^\ast=8\lambda_A^{-1}$, and combining \eqref{eq:4.30} and $A$ is large enough, we imply
\begin{equation}\label{eq:4.31}
\|n_{\neq}(\tau^\ast+s)\|_{L^2}\leq \frac{1}{2}e^{-4}\|n_{\neq}(s)\|_{L^2}+\frac{1}{2}e^{-4}e^{-\frac{\lambda_A}{2}s}\|n_0\|_{L^2},
\end{equation}
and if $s=0$, one gets
$$
\|n_{\neq}(\tau^\ast)\|_{L^2}\leq  e^{-4}\|n_0\|_{L^2}.
$$
Then we deduce by mathematical induction and \eqref{eq:4.31} that for any $m\in \mathbb{Z}^+$, one has
%and by mathematical induction and \eqref{eq:4.31}, we can imply that for any $m\in \mathbb{Z}^+$, one has
\begin{equation}\label{eq:4.32}
\|n_{\neq}(m\tau^\ast)\|_{L^2}\leq e^{-4m}\|n_0\|_{L^2}.
\end{equation}
By the same argument, we know that for any $t\in [0,T^\ast]$, there exist $\tau_0$ and $m\in \mathbb{Z}^+$, such that
$$
t=m\tau^\ast+\tau_0, \ \ \ 0\leq \tau_0\leq\tau^\ast,
$$
combining local estimate of solution to Equation \eqref{eq:2.3} and \eqref{eq:4.32}, one gets
$$
\|n_{\neq}(t)\|_{L^2}\leq 2\|n_{\neq}(m\tau^\ast)\|_{L^2}\leq 2e^{-4m}\|n_0\|_{L^2}.
$$
Then we obtain
$$
\|n_{\neq}(t)\|^2_{L^2}\leq 4e^{-8m}\|n_0\|_{L^2}\leq 4e^{-8\frac{t-\tau_0}{\tau^\ast}}\|n_0\|_{L^2}\leq 4e^{8}e^{-\lambda_A t}\|n_0\|^2_{L^2}\leq 2C_{\neq}e^{-\lambda_A t}\|n_0\|^2_{L^2},
$$
where $C_{\neq}\geq 2e^{8}$. This completes the proof of Proposition \ref{prop:2.3}.
\end{proof}

\vskip .2in
\appendix

\section{}

In this section, we introduce some useful mathematical tools and necessary supplements, which are convenient for us to read this paper.

\subsection{Moser-Alikakos iteration}
The Moser-Alikakos type iteration\cite{Alikakos.1979} is useful to establish the $L^\infty$  estimate of solution, where we give it in our convenient form.

\begin{lemma}[Moser-Alikakos iteration]\label{lem:A.1}
For function sequence $\{y_k(t)\}_{k=0}^\infty$, $y_k(t)\geq 0$, if $y_k(t)$ satisfies
\begin{equation}\label{eq:A.1}
 y_k(t)\leq a_k\max\left\{\sup_{t\geq 0}y^2_{k-1},y_k(0),1\right\},
\end{equation}
\begin{equation}\label{eq:A.2}
 y_0(t)\leq M_1,\ \ \ \  y_k(0)\leq M_2,\ \ \ a_k=C\eta^k,
\end{equation}
where $M_1, M_2, C$ and $\eta$ are positive constant. Then there exists a positive constant
$$
K_\infty=2C^{\frac{1}{2}}\eta\max\{ M_1^{\frac{1}{2}},M_2^{\frac{1}{4}},1\},
$$
such that for $k$ is large enough, one has
$$
\sup_{t\geq 0}y^{\frac{1}{2^{k+1}}}_k(t)\leq K_\infty.
$$
\end{lemma}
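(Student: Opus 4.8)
The plan is to pass to the time-independent quantities $Y_k:=\sup_{t\ge0}y_k(t)$ and to convert the recursion with the $\max$ into a clean quadratic recursion. Taking the supremum over $t$ in \eqref{eq:A.1} (noting that its right-hand side is already independent of $t$) and using $y_k(0)\le M_2$ from \eqref{eq:A.2}, I obtain
\[
Y_k \le a_k \max\{Y_{k-1}^2,\, M_2,\, 1\}.
\]
I would set $M:=\max\{M_2,1\}\ge1$, so that $\max\{Y_{k-1}^2,M_2,1\}=\max\{Y_{k-1}^2,M\}=\big(\max\{Y_{k-1},M^{1/2}\}\big)^2$. Since $a_k=C\eta^k\ge1$ in the regime of interest (i.e.\ $C,\eta\ge1$, as in the application where $\eta=2^{20/7}$), I introduce the majorant $v_k:=\max\{Y_k,M^{1/2}\}$ and check that it obeys the pure quadratic recursion $v_k\le a_k v_{k-1}^2$: indeed $Y_k\le a_k v_{k-1}^2$, and because $v_{k-1}\ge M^{1/2}$ together with $M\ge1$ and $a_k\ge1$ gives $a_k v_{k-1}^2\ge v_{k-1}^2\ge M\ge M^{1/2}$, the $\max$ defining $v_k$ is realized by $Y_k$ after one step.

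With the clean recursion in hand, the second step is to iterate it. A straightforward induction on $k$ yields the closed form
\[
v_k \le \Big(\prod_{j=1}^{k} a_j^{\,2^{k-j}}\Big)\, v_0^{\,2^{k}},
\qquad v_0=\max\{Y_0,M^{1/2}\}\le \max\{M_1,\,M_2^{1/2},\,1\}.
\]
Raising to the power $1/2^{k+1}$ and taking logarithms reduces everything to two elementary sums. Writing $a_j=C\eta^j$, the product contributes $\tfrac{1}{2^{k+1}}\big[(\log C)\sum_{j=1}^k 2^{k-j}+(\log\eta)\sum_{j=1}^k j\,2^{k-j}\big]$, and using $\sum_{j=1}^k 2^{k-j}=2^k-1\le 2^k$ together with $\sum_{j=1}^k j\,2^{k-j}=2^k\sum_{j=1}^k j2^{-j}\le 2^k\cdot 2=2^{k+1}$ (the last bound being the convergence of $\sum_j j2^{-j}=2$), this is at most $\tfrac12\log C+\log\eta$. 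Hence the prefactor is controlled by $C^{1/2}\eta$ uniformly in $k$, while the initial-data factor is $v_0^{2^k/2^{k+1}}=v_0^{1/2}\le \max\{M_1^{1/2},M_2^{1/4},1\}$.

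Combining the two estimates and using $Y_k\le v_k$, I arrive at
\[
\sup_{t\ge0} y_k(t)^{1/2^{k+1}} = Y_k^{1/2^{k+1}} \le C^{1/2}\eta\,\max\{M_1^{1/2},M_2^{1/4},1\}\le K_\infty ,
\]
valid for every $k\ge1$, which is the claim with the factor $2$ in $K_\infty$ left as slack. I expect the main obstacle to be the first step: absorbing the lower-order entries $M_2$ and $1$ in the $\max$ into a single quadratic recursion, since a naive iteration of $Y_k\le a_kY_{k-1}^2$ is only legitimate in the regime where $Y_{k-1}^2$ dominates. The device $v_k=\max\{Y_k,M^{1/2}\}$, powered by $a_k\ge1$ and $M\ge1$, is exactly what renders the recursion clean for all $k$ simultaneously; once that is secured, the summation is routine, the only point requiring care being the convergence of $\sum_j j2^{-j}$, which is what produces the fixed constant $\eta$ (rather than a $k$-dependent blow-up) after the $1/2^{k+1}$ normalization.
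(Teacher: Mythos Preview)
Your proof is correct and follows essentially the same route as the paper's: both unfold the recursion into the product $\prod_{j=1}^{k}a_j^{2^{k-j}}$ acting on $(\max\{M_1,M_2^{1/2},1\})^{2^k}$ and then take the $1/2^{k+1}$ root. The only cosmetic difference is that you encapsulate the lower-order entries via the majorant $v_k=\max\{Y_k,M^{1/2}\}$ to obtain a clean quadratic recursion, whereas the paper carries the $\max$ through the iteration explicitly; you are also explicit about the implicit hypothesis $a_k\ge1$ (equivalently $C,\eta\ge1$), which the paper uses tacitly when it drops $M_2,1$ inside the $\max$.
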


\begin{proof}
Combining \eqref{eq:A.1} and \eqref{eq:A.2}, we deduce that
\begin{equation}\label{eq:A.3}
\begin{aligned}
y_{k}(t)&\leq a_k\max\{ \sup_{t\geq 0}y^{2}_{k-1},y_{k}(0),1\}\\
&\leq a_k\max\{ \sup_{t\geq 0}(a_{k-1}\max\{ \sup_{t\geq 0}y^{2}_{k-2},y_{k-1}(0),1\})^2,M_2,1\}\\
&\leq a_k(a_{k-1}\max\{ \sup_{t\geq 0}y^{2}_{k-2},M_2,1\})^2\\
&\leq a_ka^2_{k-1}\max\{ \sup_{t\geq 0}y^{4}_{k-2},K^2\},
\end{aligned}
\end{equation}
where
$$
K=\max\{M_2,1\}.
$$
Repeat the process of \eqref{eq:A.3}, one gets
\begin{equation}\label{eq:A.4}
y_{k}(t)\leq a_k(a_{k-1})^2(a_{k-2})^{2^2}\cdots (a_{1})^{2^{k-1}}\max\{ \sup_{t\geq 0}y^{2^k}_{0},K^{2^{k-1}}\}
\end{equation}
Since $a_k=C\eta^k$, then
$$
a_k(a_{k-1})^2(a_{k-2})^{2^2}\cdots (a_{1})^{2^{k-1}}=C^{2^k-1}\eta^{2^{k+1}-2-k}.
$$
Then we can imply by \eqref{eq:A.2} and \eqref{eq:A.4} that
$$
\begin{aligned}
y_{k}(t)&\leq C^{2^k-1}\eta^{2^{k+1}-2-k}\max\{ \sup_{t\geq 0}y^{2^k}_{0},K^{2^{k-1}}\}\\
&\leq C^{2^k-1}\eta^{2^{k+1}-2-k}\max\{ M_1^{2^k},K^{2^{k-1}}\}.
\end{aligned}
$$
Therefore, if $k$ is large enough, one has
$$
\begin{aligned}
\sup_{t\geq 0}y^{\frac{1}{2^{k+1}}}_k(t)&\leq C^{\frac{1}{2}-\frac{1}{2^{k+1}}}\eta^{1-\frac{2+k}{2^{k+1}}}\max\{ M_1^{\frac{1}{2}},K^{\frac{1}{4}}\}\\
&\leq 2C^{\frac{1}{2}}\eta\max\{ M_1^{\frac{1}{2}},M_2^{\frac{1}{4}},1\}\triangleq K_\infty.
\end{aligned}
$$
This completes the proof of Lemma \ref{lem:A.1}.
\end{proof}

\subsection{Elliptic estimate}
In this paper, we need some useful elliptic estimates. Considering
elliptic equation
\begin{equation}\label{eq:A.5}
-\Delta c=n-c, \ \ \ x\in \Omega.
\end{equation}
Here we give the following lemmas.

%In this section, we establish some useful elliptic estimate, we consider the following elliptic equation
%\begin{equation}\label{eq:A.5}
%-\Delta c=n-c, \ \ \ x\in \Omega.
%\end{equation}

\begin{lemma}\label{lem:A.2}
If $\Omega=\mathbb{R}$, the solution of Equation \eqref{eq:A.5} hold that
$$
\|c\|_{L^p}\lesssim\|n\|_{L^p}, \ \ \ p\geq2,
$$
and
$$
\|\partial_xc\|_{L^2}+\|\partial_xc\|_{L^4}+\|\partial_xc\|_{L^\infty}+\|\partial^2_xc\|_{L^2}\lesssim\|n\|_{L^2}.
$$
\end{lemma}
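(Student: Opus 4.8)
The plan is to exploit the explicit structure of the one-dimensional resolvent $(1-\partial_{xx})^{-1}$. On $\Omega=\mathbb{R}$ the equation \eqref{eq:A.5} reads $-c''+c=n$, whose solution is the convolution $c=G*n$, where $G(x)=\tfrac{1}{2}e^{-|x|}$ is the Green's function of $1-\partial_{xx}$. First I would record that $\|G\|_{L^1(\mathbb{R})}=\tfrac12\int_{\mathbb{R}}e^{-|x|}\,dx=1$, so that Young's convolution inequality immediately gives $\|c\|_{L^p}\leq\|G\|_{L^1}\|n\|_{L^p}=\|n\|_{L^p}$ for every $p\geq1$, in particular for all $p\geq2$. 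This settles the first assertion.

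For the $L^2$-based derivative bounds I would pass to the Fourier side, where $\widehat{c}(\xi)=(1+\xi^2)^{-1}\widehat{n}(\xi)$, hence $\widehat{\partial_xc}(\xi)=\frac{i\xi}{1+\xi^2}\widehat{n}(\xi)$ and $\widehat{\partial_{xx}c}(\xi)=-\frac{\xi^2}{1+\xi^2}\widehat{n}(\xi)$. Since the multipliers obey the elementary uniform bounds $\frac{|\xi|}{1+\xi^2}\leq\tfrac12$ (maximal at $\xi=1$) and $\frac{\xi^2}{1+\xi^2}\leq1$, Plancherel's theorem yields $\|\partial_xc\|_{L^2}\leq\tfrac12\|n\|_{L^2}$ and $\|\partial^2_xc\|_{L^2}\leq\|n\|_{L^2}$. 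Equivalently, one may test the equation against $c$ and against $-c''$ and use $\|c\|_{L^2}\leq\|n\|_{L^2}$ from the previous step to obtain the same energy bounds directly.

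It remains to control $\|\partial_xc\|_{L^\infty}$ and $\|\partial_xc\|_{L^4}$. For the supremum norm I would use the convolution representation $\partial_xc=G'*n$ with $G'(x)=-\tfrac12\,\mathrm{sign}(x)\,e^{-|x|}$, note that $\|G'\|_{L^2}^2=\tfrac14\int_{\mathbb{R}}e^{-2|x|}\,dx=\tfrac14$, and apply Young (here Cauchy--Schwarz, with exponents $2,2,\infty$) to get $\|\partial_xc\|_{L^\infty}\leq\|G'\|_{L^2}\|n\|_{L^2}=\tfrac12\|n\|_{L^2}$. The $L^4$ estimate then follows by interpolating the two bounds already obtained, $\|\partial_xc\|_{L^4}\leq\|\partial_xc\|_{L^2}^{1/2}\|\partial_xc\|_{L^\infty}^{1/2}\lesssim\|n\|_{L^2}$, or alternatively by the one-dimensional Gagliardo--Nirenberg inequality $\|\partial_xc\|_{L^4}\lesssim\|\partial_xc\|_{L^2}^{3/4}\|\partial^2_xc\|_{L^2}^{1/4}\lesssim\|n\|_{L^2}$.

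No step here presents a genuine obstacle, since every bound reduces either to the $L^1$ or $L^2$ size of the elementary kernels $G,G'$ or to a uniform estimate for a rational Fourier multiplier. The only point requiring a little care is the choice of representation: the multiplier formulation is the clean route to the $L^2$ estimates, while the convolution formulation is what delivers the $L^\infty$ bound cleanly (precisely because $G'\in L^2$). The slightly less automatic estimate is the $L^4$ bound on the first derivative, which is not an $L^2$-multiplier statement and therefore must be reached through interpolation rather than a direct Fourier computation.
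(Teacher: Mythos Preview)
Your argument is correct and complete, but it proceeds along a different line than the paper's. The paper never writes down the Green's function or Fourier multipliers: instead it obtains $\|\partial_xc\|_{L^2}+\|\partial_x^2c\|_{L^2}\lesssim\|n\|_{L^2}$ by the energy identity obtained from testing the equation against $\partial_x^2c$ (your ``equivalently'' remark), then deduces both $\|\partial_xc\|_{L^4}$ and $\|\partial_xc\|_{L^\infty}$ from those two $L^2$ quantities via the one-dimensional Gagliardo--Nirenberg inequalities $\|\partial_xc\|_{L^4}\lesssim\|\partial_xc\|_{L^2}^{3/4}\|\partial_x^2c\|_{L^2}^{1/4}$ and $\|\partial_xc\|_{L^\infty}\lesssim\|\partial_xc\|_{L^2}^{1/2}\|\partial_x^2c\|_{L^2}^{1/2}$, and finally gets $\|c\|_{L^p}\lesssim\|n\|_{L^p}$ by testing against $|c|^{p-2}c$. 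Your convolution/multiplier route has the advantage of producing explicit constants (indeed $\|c\|_{L^p}\le\|n\|_{L^p}$ with constant~$1$) and of giving the $L^\infty$ bound on $\partial_xc$ without first passing through $\partial_x^2c$; the paper's energy approach, on the other hand, is kernel-free and transfers verbatim to settings where the Green's function is less explicit.
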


\vskip .05in

\begin{proof}
Let us multiply both sides of \eqref{eq:A.5} by $\partial^2_{x} c$ and integrate over $\mathbb{R}$, to obtain
\begin{equation}\label{eq:A.6}
-\|\partial^2_xc\|^2_{L^2}-\|\partial_xc\|^2_{L^2}=\int_{\mathbb{R}}n\partial^2_xcdx,
\end{equation}
and by H\"{o}lder's inequality and Young's inequality, one has
$$
\left|\int_{\mathbb{R}}n\partial^2_xcdx\right|\lesssim\|n\|_{L^2}\|\partial^2_xc\|_{L^2}\leq C\|n\|^2_{L^2}+\frac{1}{2}\|\partial^2_xc\|^2_{L^2}.
$$
Then we deduce by \eqref{eq:A.6} that
\begin{equation}\label{eq:A.7}
\|\partial^2_xc\|^2_{L^2}+\|\partial_xc\|^2_{L^2}\lesssim\|n\|^2_{L^2}.
\end{equation}
By using the Gagliardo-Nirenberg inequality and \eqref{eq:A.7}, we have
\begin{equation}\label{eq:A.8}
\|\partial_xc\|_{L^4}\lesssim\|\partial_xc\|^{\frac{3}{4}}_{L^2}\|\partial^2_xc\|^{\frac{1}{4}}_{L^2}\lesssim\|n\|_{L^2},
\end{equation}
and
\begin{equation}\label{eq:A.9}
\|\partial_xc\|_{L^\infty}\lesssim\|\partial_xc\|^{\frac{1}{2}}_{L^2}\|\partial^2_xc\|^{\frac{1}{2}}_{L^2}\lesssim\|n\|_{L^2},
\end{equation}
Let us multiply both sides of \eqref{eq:A.5}) by $|c|^{p-2}c$ and integrate over $\mathbb{R}$, to obtain
$$
-\int_{\mathbb{R}}\Delta c|c|^{p-2}cdx+\int_{\mathbb{R}}|c|^{p}dx=\int_{\mathbb{R}}n|c|^{p-2}cdx,
$$
then by H\"{o}lder's inequality, one has
\begin{equation}\label{eq:A.10}
\frac{2}{p}\int_{\mathbb{R}} (\partial_x |c|^{\frac{p}{2}})^2dx+\|c\|^p_{L^p}=\int_{\mathbb{R}}n|c|^{p-2}cdx\lesssim\|n\|_{L^p}\|c\|^{p-1}_{L^p}.
\end{equation}
Combining \eqref{eq:A.7}-\eqref{eq:A.10}, we finish the proof of Lemma \ref{lem:A.2}.
\end{proof}

\vskip .03in

\begin{lemma}\label{lem:A.3}
If $\Omega=\mathbb{T}^2\times \mathbb{R}$, the solution of Equation \eqref{eq:A.5} hold that
$$
\|\nabla^2c\|_{L^2}+\|\nabla c\|_{L^2}+\|\nabla c\|_{L^4}\lesssim\|n\|_{L^2},
$$
and for any $p\geq2,k\geq2, k\in \mathbb{Z}$, one has
$$
\|c\|_{L^p}\lesssim\|n\|_{L^p}, \ \ \ \   \|\nabla^kc\|_{L^p} \lesssim\|\nabla^{k-2}n\|_{L^p}+\|n\|_{L^p}.
$$
\end{lemma}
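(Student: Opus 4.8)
The plan is to obtain the low-order bounds by energy estimates (exactly as in the proof of Lemma \ref{lem:A.2}) and to handle the higher derivatives by Fourier-multiplier, i.e. Calderón–Zygmund, theory. Testing $-\Delta c+c=n$ against $c$ and integrating over $\mathbb{T}^2\times\mathbb{R}$ gives $\|\nabla c\|_{L^2}^2+\|c\|_{L^2}^2=\int nc\le \|n\|_{L^2}\|c\|_{L^2}$, hence $\|c\|_{L^2}+\|\nabla c\|_{L^2}\lesssim \|n\|_{L^2}$. Testing instead against $-\Delta c$ yields $\|\Delta c\|_{L^2}^2+\|\nabla c\|_{L^2}^2=-\int n\,\Delta c\le \|n\|_{L^2}\|\Delta c\|_{L^2}$, so $\|\Delta c\|_{L^2}\lesssim\|n\|_{L^2}$; since $\sum_{i,j}\|\partial_i\partial_j c\|_{L^2}^2=\|\Delta c\|_{L^2}^2$ by Plancherel on the product group $\mathbb{Z}^2\times\mathbb{R}$, this upgrades to $\|\nabla^2 c\|_{L^2}\lesssim\|n\|_{L^2}$.

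The $L^4$ bound then follows from the three-dimensional Gagliardo–Nirenberg inequality $\|\nabla c\|_{L^4}\lesssim\|\nabla c\|_{L^2}^{1/4}\|\nabla^2 c\|_{L^2}^{3/4}\lesssim\|n\|_{L^2}$, which establishes the first displayed estimate. For the scalar $L^p$ bound I would repeat the argument of Lemma \ref{lem:A.2} verbatim: multiplying the equation by $|c|^{p-2}c$ and integrating produces $\frac{4(p-1)}{p^2}\|\nabla|c|^{p/2}\|_{L^2}^2+\|c\|_{L^p}^p=\int n|c|^{p-2}c\le \|n\|_{L^p}\|c\|_{L^p}^{p-1}$, whence $\|c\|_{L^p}\lesssim\|n\|_{L^p}$.

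The higher-order estimate is the crux. Writing $c=(-\Delta+1)^{-1}n$ and commuting constant-coefficient operators, $\nabla^k c=\nabla^2(-\Delta+1)^{-1}\nabla^{k-2}n$. The Fourier symbol of $\nabla^2(-\Delta+1)^{-1}$, namely $-\xi_i\xi_j/(|\xi|^2+1)$, satisfies the Mikhlin–Hörmander (equivalently Marcinkiewicz) conditions, so the associated operator is bounded on $L^p$ for $1<p<\infty$; this delivers $\|\nabla^k c\|_{L^p}\lesssim\|\nabla^{k-2}n\|_{L^p}$, which is even sharper than the stated bound (taking $k=2$ recovers $\|\nabla^2c\|_{L^p}\lesssim\|n\|_{L^p}$). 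The principal difficulty I anticipate is that the symbol lives on the mixed dual group $\mathbb{Z}^2\times\mathbb{R}$ rather than on $\mathbb{R}^3$, so the multiplier bound must be justified through a transference argument (de Leeuw-type) together with the Marcinkiewicz multiplier theorem on the torus factor, rather than by citing the Euclidean theorem directly. An alternative, should one wish to avoid multiplier theory, is to invoke local Agmon–Douglis–Nirenberg elliptic $L^p$-regularity for $-\Delta+1$ applied to $\nabla^{k-2}c$ and patch it with the lower-order bounds already obtained; in either route the remaining steps are routine once this regularity statement is secured.
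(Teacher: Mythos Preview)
Your low-order estimates---testing against $c$ and $-\Delta c$, then Gagliardo--Nirenberg for the $L^4$ bound, and the $|c|^{p-2}c$ argument for $\|c\|_{L^p}$---coincide with the paper's proof essentially line for line. The divergence is in the higher-order bound $\|\nabla^k c\|_{L^p}$.

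The paper stays entirely elementary: it applies $\nabla^{k-2}$ to the equation, tests the result against $|\nabla^k c|^{p-2}\nabla^k c$, and reads off the recursive inequality $\|\nabla^k c\|_{L^p}\lesssim \|\nabla^{k-2}n\|_{L^p}+\|\nabla^{k-2}c\|_{L^p}$; iterating two steps at a time in $k$ and feeding in the already-established bounds $\|c\|_{L^p},\|\nabla c\|_{L^p},\|\nabla^2 c\|_{L^p}\lesssim\|n\|_{L^p}$ closes the argument. Your route via the Mikhlin--H\"ormander multiplier $-\xi_i\xi_j/(|\xi|^2+1)$ is a genuine alternative: it imports more machinery (and you correctly flag the transference issue on the mixed dual group $\mathbb{Z}^2\times\mathbb{R}$), but it delivers the sharper bound $\|\nabla^k c\|_{L^p}\lesssim\|\nabla^{k-2}n\|_{L^p}$ in one stroke, with no iteration and no additional $\|n\|_{L^p}$ term. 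The paper's approach is self-contained and avoids any Calder\'on--Zygmund theory; yours is cleaner once that theory is in hand, and also sidesteps the bookkeeping needed to make the identity $\int \nabla^{k-2}\Delta c\,|\nabla^k c|^{p-2}\nabla^k c\,dx=\|\nabla^k c\|_{L^p}^p$ precise.
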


\vskip .05in

\begin{proof}
Let us multiply both sides of \eqref{eq:A.5} by $\Delta c$ and integrate over $\mathbb{T}^2\times \mathbb{R}$, to obtain
\begin{equation}\label{eq:A.11}
-\|\nabla^2 c\|^2_{L^2}-\|\nabla c\|^2_{L^2}=\int_{\mathbb{T}^2\times\mathbb{R}}n \Delta cdx,
\end{equation}
and by H\"{o}lder's inequality and Young's inequality, one has
$$
\left|\int_{\mathbb{T}^2\times\mathbb{R}}n \Delta cdx\right|\lesssim\|n\|_{L^2}\|\nabla^2c\|_{L^2}\leq C\|n\|^2_{L^2}+\frac{1}{2}\|\nabla^2c\|^2_{L^2}.
$$
Then we deduce by \eqref{eq:A.11} that
\begin{equation}\label{eq:A.12}
\|\nabla^2c\|^2_{L^2}+\|\nabla c\|^2_{L^2}\lesssim\|n\|^2_{L^2}.
\end{equation}
Using the Gagliardo-Nirenberg inequality and \eqref{eq:A.12}, we have
\begin{equation}\label{eq:A.13}
\|\nabla c\|_{L^4}\lesssim\|\nabla c\|^{\frac{1}{4}}_{L^2}\|\nabla^2c\|^{\frac{3}{4}}_{L^2}\lesssim\|n\|_{L^2},
\end{equation}
and by the similar argument with Lemma \ref{lem:A.2}, one has
\begin{equation}\label{eq:A.14}
\|c\|_{L^p}\lesssim\|n\|_{L^p}.
\end{equation}
The operator $\nabla^{k-2}$ is applied to Equation \eqref{eq:A.5}, one has
\begin{equation}\label{eq:A.15}
-\nabla^{k-2}\Delta c=\nabla^{k-2}n-\nabla^{k-2} c.
\end{equation}
Let us multiply both sides of \eqref{eq:A.15} by $|\nabla^k c|^{p-2}\nabla^k c$ and integrate over $\mathbb{T}^2\times[0,1]$, to obtain
\begin{equation}\label{eq:A.16}
\begin{aligned}
-\int_{\mathbb{T}^2\times\mathbb{R}}\nabla^{k-2}\Delta c|\nabla^k c|^{p-2}\nabla^k cdx&=\int_{\mathbb{T}^2\times\mathbb{R}}\nabla^{k-2}n|\nabla^k c|^{p-2}\nabla^k cdx\\
&-\int_{\mathbb{T}^2\times\mathbb{R}}\nabla^{k-2}c|\nabla^k c|^{p-2}\nabla^k cdx.
\end{aligned}
\end{equation}
By integrate by part, one has
$$
\int_{\mathbb{T}^2\times\mathbb{R}}\nabla^{k-2}\Delta c|\nabla^k c|^{p-2}\nabla^k cdx=\|\nabla^k c\|^p_{L^p},
$$
and by H\"{o}lder's inequality, one has
$$
\left|\int_{\mathbb{T}^2\times\mathbb{R}}\nabla^{k-2}n|\nabla^k c|^{p-2}\nabla^k cdx\right|\lesssim\|\nabla^{k-2}n\|_{L^p}\|\nabla^{k}c\|^{p-1}_{L^p},
$$
and
$$
\left|\int_{\mathbb{T}^2\times\mathbb{R}}\nabla^{k-2}c|\nabla^k c|^{p-2}\nabla^k cdx\right|\lesssim\|\nabla^{k-2}c\|_{L^p}\|\nabla^{k}c\|^{p-1}_{L^p}.
$$
Therefore, we imply by \eqref{eq:A.16} that
\begin{equation}\label{eq:A.17}
\|\nabla^{k}c\|_{L^p}\lesssim\|\nabla^{k-2}n\|_{L^p}+\|\nabla^{k-2}c\|_{L^p}.
\end{equation}
We deduce by \eqref{eq:A.14}, \eqref{eq:A.17} and Gagliardo-Nirenberg inequality that
$$
\|\nabla^{2}c\|_{L^p}\lesssim\| n\|_{L^p}+\|c\|_{L^p}\lesssim\| n\|_{L^p},
$$
and
$$
\|\nabla c\|_{L^p}\lesssim\|c\|^{\frac{1}{2}}_{L^p}\|\nabla^{2}c\|^{\frac{1}{2}}_{L^p}\lesssim\| n\|_{L^p}.
$$
Then we deduce by \eqref{eq:A.17} that
\begin{equation}\label{eq:A.18}
\|\nabla^{k}c\|_{L^p}\lesssim\|\nabla^{k-2}n\|_{L^p}+\|n\|_{L^p}.
\end{equation}
Combining \eqref{eq:A.11}-\eqref{eq:A.14} and \eqref{eq:A.18}, we finish the proof Lemma \ref{lem:A.3}.
\end{proof}

\vskip .05in

\subsection{Local estimate} The local estimate  makes the Assumption \ref{ass:3.1} reasonable, which is the premise of bootstrap argument. Here we give the local estimate of solution to Equation \eqref{eq:2.3}.

\begin{lemma}[Local estimate of $n_{\neq}$]\label{lem:A.4}
Let $n_{\neq}$ be a solution of Equation \eqref{eq:2.3} with initial data $n_0\geq 0$ and $n_0\in H^s(\mathbb{T}^2\times \mathbb{R}), s>\frac{7}{2}$. Then there exist time $t^\ast=8\lambda^{-1}_A$, $A$ is large enough and a  positive constant $C_{\neq}$, such that for any $0\leq s\leq t\leq t^\ast$, one has
$$
\frac{1}{A}\int_{s}^{t}\|\nabla n_{\neq}\|^2_{L^2}d\tau\leq 16C_{\neq}e^{-\lambda_A s}\|n_{0}\|^2_{L^2},
$$
and for any $0\leq t\leq t^\ast$, one has
$$
\|n_{\neq}(t)\|^2_{L^2}\leq 4C_{\neq}e^{-\lambda_A t}\|n_{0}\|^2_{L^2},
$$
where $\lambda_A$ is defined in \eqref{eq:1.6}.% and $N_0$ is fixed in.
\end{lemma}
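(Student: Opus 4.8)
The plan is to run a short-time continuity (local bootstrap) argument on the interval $[0,t^\ast]$ with $t^\ast=8\lambda_A^{-1}=8\epsilon_0^{-1}A^{1/2}$, exploiting the structural feature that every term of \eqref{eq:2.3} except the transport $y\partial_{x_1}+y^2\partial_{x_2}$ (which is skew-adjoint in $L^2$ and hence invisible to the energy) carries a prefactor $A^{-1}$. Thus, although the length $t^\ast$ of the window grows like $A^{1/2}$, the accumulated nonlinear and coupling contributions over $[0,t^\ast]$ are of size $A^{-1}\cdot t^\ast\sim A^{-1/2}$, which is small for large $A$. I would also use from the outset that on $[0,t^\ast]$ one has $e^{-\lambda_A t}\in[e^{-8},1]$, so with $C_{\neq}\ge 2e^{8}$ the target quantities $16C_{\neq}e^{-\lambda_A s}$ and $4C_{\neq}e^{-\lambda_A t}$ stay above a fixed positive constant; it therefore suffices to prove boundedness of the relevant norms, not genuine decay.

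\textbf{Step 1 (set-up).} By the local well-posedness of Proposition \ref{prop:2.1} there is $T_\ast=T_\ast(n_0)>0$ with $n\in C([0,T_\ast],H^s)$. Let $\bar T\le\min\{T_\ast,t^\ast\}$ be the largest time on which both inequalities of Assumption \ref{assm:2.2} (with the constants $16C_{\neq}$ and $4C_{\neq}$ appearing there) hold. Since $\|n_{\neq}(0)\|_{L^2}^2=\|P_{\neq}n_0\|_{L^2}^2\le\|n_0\|_{L^2}^2<4C_{\neq}\|n_0\|_{L^2}^2$, the defining inequalities hold strictly at $t=0$, so continuity gives $\bar T>0$.

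\textbf{Step 2 (auxiliary bounds).} On $[0,\bar T]$ the running hypotheses are precisely (A-1) and (A-2), which are exactly what the proofs of Lemmas \ref{lem:4.1}--\ref{lem:4.3} consume. Hence those proofs apply verbatim on $[0,\bar T]$ and furnish the $A$-independent constants $B_1,B_2,B_3$ controlling $\|n^0\|_{L^\infty_tL^2_y}$, $\|n\|_{L^\infty_tL^\infty}$ and $\|\partial_yn^0\|_{L^\infty_tL^2_y}$.

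\textbf{Step 3 (improvement and conclusion).} I would then reproduce the energy estimate for $n_{\neq}$ from \eqref{eq:4.18}--\eqref{eq:4.20}. Inserting $B_1,B_2,B_3$ and the a priori bound $\|n_{\neq}(\tau)\|_{L^2}^2\le 4C_{\neq}e^{-\lambda_A\tau}\|n_0\|_{L^2}^2$, every right-hand contribution has the form $A^{-1}\int_0^{t^\ast}(\cdots)\,d\tau$: the coupling terms are $O(A^{-1}t^\ast)=O(A^{-1/2})$, while the purely nonlinear ones are controlled by $\tfrac{(4C_{\neq})^5\|n_0\|_{L^2}^{8}}{5\epsilon_0A^{1/2}}\|n_0\|_{L^2}^2$ and similar expressions, all $o(1)$ as $A\to\infty$ by the same computations as in the global argument. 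Since $\|n_{\neq}(0)\|_{L^2}^2\le\|n_0\|_{L^2}^2$, for $A$ large this yields, on $[0,\bar T]$, the \emph{strictly} improved bounds $\tfrac1A\int_s^t\|\nabla n_{\neq}\|_{L^2}^2\,d\tau\le 8C_{\neq}e^{-\lambda_A s}\|n_0\|_{L^2}^2$ and $\|n_{\neq}(t)\|_{L^2}^2\le 2C_{\neq}e^{-\lambda_A t}\|n_0\|_{L^2}^2$ (the second obtained by pure energy, since only boundedness is needed; Theorem \ref{thm:1.0} may be invoked directly if one prefers to keep the decay). Strict improvement plus continuity rules out $\bar T<\min\{T_\ast,t^\ast\}$, so $\bar T=\min\{T_\ast,t^\ast\}$; the resulting uniform bound $\sup_{[0,\bar T]}\|n\|_{L^2}<\infty$ together with the continuation criterion of Proposition \ref{prop:2.1} forces $T_\ast\ge t^\ast$, whence $\bar T=t^\ast$ and the stated inequalities with the weaker constants $16C_{\neq}$, $4C_{\neq}$ hold on all of $[0,t^\ast]$.

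The main obstacle is not any individual estimate but the bookkeeping that the $A$-dependent window length $t^\ast\sim A^{1/2}$ is compensated exactly by the $A^{-1}$ prefactors, so that the total accrued effect is $O(A^{-1/2})$ and can be absorbed for $A$ large. Two points require care: first, avoiding circularity by \emph{not} using the Duhamel iteration of Proposition \ref{prop:2.3}, which itself presupposes $T^\ast\ge t^\ast$, and instead relying on the unconditional energy identity (and, if desired, the unconditional semigroup bound of Theorem \ref{thm:1.0}); second, upgrading existence from the a priori local time $T_\ast(n_0)$ to the $A$-dependent time $t^\ast$ via the continuation criterion rather than taking it for granted.
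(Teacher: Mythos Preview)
Your approach is correct but takes a substantially more elaborate route than the paper. The paper avoids bootstrapping altogether at this stage: it works with the \emph{full} solution $n$ of \eqref{eq:2.1}, for which the transport terms again drop from the energy, yielding the \emph{closed} differential inequality
\[
\frac{d}{dt}\|n\|_{L^2}^2+\frac{1}{A}\|\nabla n\|_{L^2}^2\le \frac{C}{A}\|n\|_{L^2}^6.
\]
This is solved explicitly: $\|n(t)\|_{L^2}^2\le \|n_0\|_{L^2}^2\bigl(1-2CA^{-1}\|n_0\|_{L^2}^4\, t\bigr)^{-1/2}$, which stays below $4\|n_0\|_{L^2}^2$ for $t\le (3C\|n_0\|_{L^2}^4)^{-1}A$, a time of order $A\gg t^\ast\sim A^{1/2}$. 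The orthogonal decomposition $\|n\|_{L^2}^2=\|n^0\|_{L^2_y}^2+\|n_{\neq}\|_{L^2}^2$ then gives the $L^\infty L^2$ bound on $n_{\neq}$ at once (using $C_{\neq}\ge e^8$ to insert the harmless factor $e^{-\lambda_A t}$), and the $L^2\dot H^1$ bound on $\nabla n_{\neq}$ follows by integrating the same inequality together with the elementary $L^2$ energy identity on the zero-mode equation \eqref{eq:2.2}. Crucially, this never invokes Lemmas~\ref{lem:4.1}--\ref{lem:4.3} or any continuity argument; the ODE route is both shorter and logically prior to the rest of the paper. Your bootstrap does work and has the virtue of reusing the Proposition~\ref{prop:2.3} machinery verbatim, but the price is importing the auxiliary constants $B_1,B_2,B_3$ and all their associated largeness conditions on $A$, whereas the paper's route needs only the single elliptic/Gagliardo--Nirenberg estimate behind $\|n\|_{L^4}^2\lesssim\|n\|_{L^2}^{1/2}\|\nabla n\|_{L^2}^{3/2}$ and $\|\nabla c\|_{L^4}\lesssim\|n\|_{L^2}$.
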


%\vskip .05in

\begin{proof}
We establish the $L^2$ estimate of $n$, let us multiply both sides of \eqref{eq:2.1} by $n$ and integrate over $\mathbb{T}^2\times \mathbb{R}$, to obtain
\begin{equation}\label{eq:A.19}
\frac{1}{2}\frac{d}{dt}\|n\|^2_{L^2}+\frac{1}{A}\|\nabla n\|^2_{L^2}+\frac{1}{A}\int_{\mathbb{T}^2\times \mathbb{R}}\nabla(n\nabla c)ndx_1dx_2dy=0.
\end{equation}
By H\"{o}lder's inequality, Gagliardo-Nirenberg inequality, Young's inequality and Lemma \ref{lem:A.3}, the third term of the left-hand side of \eqref{eq:A.19} can be estimated as
$$
\left|\frac{1}{A}\int_{\mathbb{T}^2\times \mathbb{R}}\nabla(n\nabla c)ndx_1dx_2dy\right|
\lesssim \frac{1}{A}\|\Delta c\|_{L^2}\|n\|^2_{L^4}
\leq \frac{1}{2A}\|\nabla n\|^{2}_{L^2}+\frac{C}{A}\|n\|^{6}_{L^2}.
$$
Combining \eqref{eq:A.19}, we have
\begin{equation}\label{eq:A.20}
\frac{d}{dt}\|n\|^2_{L^2}+\frac{1}{A}\|\nabla n\|^2_{L^2}\leq \frac{C}{A}\|n\|^{6}_{L^2}.
\end{equation}
Denote $B_0=\|n_0\|^2_{L^2}$, we deduce by solving the differential inequality in \eqref{eq:A.20} that
$$
\|n(t,\cdot)\|^2_{L^2}\leq B_0\left(1-2CA^{-1}B_0^2t\right)^{-\frac{1}{2}}
$$
Then for any $0\leq t\leq \left(3CB^2_0\right)^{-1}A$, one has
$$
\|n\|^2_{L^2}\leq 4\|n_0\|^2_{L^2}.
$$
Choose $A$ is large, such that $8\lambda^{-1}_A\leq\left(3CB^2_0\right)^{-1}A$, then for any $0\leq t\leq 8\lambda^{-1}_A$, one has
\begin{equation}\label{eq:A.21}
\|n^{0}\|^2_{L^2_y}+\|n_{\neq}(t)\|^2_{L^2}=\|n(t)\|^2_{L^2}\leq 4\|n_0\|^2_{L^2}\leq 4C_{\neq}e^{-\lambda_A t}\|n_0\|^2_{L^2},
\end{equation}
where $C_{\neq}\geq e^{8}$.
Combining \eqref{eq:A.20} and \eqref{eq:A.21}, we imply that for any $0\leq s\leq t\leq t^\ast$, one has
\begin{equation}\label{eq:A.22}
\begin{aligned}
\frac{1}{A}\int_{s}^{t}\|\nabla n\|^2_{L^2}d\tau &\leq \frac{C}{A}\int_{s}^{t}\|n\|^6_{L^2}d\tau+\|n(s)\|^2_{L^2}\\
&\leq \frac{C(4C_{\neq})^3\|n_0\|^4_{L^2}}{A}\int_{s}^{t}e^{-3\lambda_A \tau}d\tau\|n_0\|^2_{L^2}+4C_{\neq}e^{-\lambda_A s}\|n_0\|^2_{L^2}\\
&\leq \frac{C(4C_{\neq})^3\|n_0\|^4_{L^2}}{3\lambda_AA}e^{-3\lambda_A s}\|n_0\|^2_{L^2}+2C_{\neq}e^{-\lambda_A s}\|n_0\|^2_{L^2}.
\end{aligned}
\end{equation}
And base on \eqref{eq:A.21} and the energy estimate of Equation \eqref{eq:2.2}, see \eqref{eq:4.14}, one gets
\begin{equation}\label{eq:A.23}
\begin{aligned}
\frac{1}{A}\int_{s}^{t}\|\partial_y n^0\|^2_{L^2_y}d\tau&\leq \frac{C}{A}\int_{s}^{t}\|n^{0}\|^4_{L^2_y}d\tau+\frac{1}{32C_{\neq}A}\int_{s}^{t}\|\nabla n_{\neq}\|^2_{L^2}d\tau\\
&+\frac{C}{A}\int_{s}^{t}\|n_{\neq}\|^{10}_{L^2}d\tau+\|n^{0}(s)\|^2_{L^2_y}\\
&\leq \frac{C(4C_{\neq})^2\|n_0\|^2_{L^2}}{2\lambda_AA}e^{-2\lambda_A s}\|n_0\|^2_{L^2}+\frac{1}{32C_{\neq}A}\int_{s}^{t}\|\nabla n_{\neq}\|^2_{L^2}d\tau\\
&+\frac{C(4C_{\neq})^5\|n_0\|^8_{L^2}}{5\lambda_AA}e^{-5\lambda_A s}\|n_0\|^2_{L^2}+2C_{\neq}e^{-\lambda_A s}\|n_0\|^2_{L^2}
\end{aligned}
\end{equation}
Combining \eqref{eq:A.22} and \eqref{eq:A.23}, we obtain
$$
\begin{aligned}
\frac{1}{A}\int_{s}^{t}\|\nabla n_{\neq}\|^2_{L^2}d\tau&\leq \frac{2}{A}\int_{s}^{t}\|\nabla n\|^2_{L^2}d\tau+\frac{2}{A}\int_{s}^{t}\|\partial_y n^0\|^2_{L^2_y}d\tau\\
&\leq \frac{C(4C_{\neq})^3\|n_0\|^4_{L^2}}{3\lambda_AA}e^{-\lambda_A s}\|n_0\|^2_{L^2}+\frac{C(4C_{\neq})^5\|n_0\|^8_{L^2}}{5\lambda_AA}e^{-\lambda_A s}\|n_0\|^2_{L^2}\\
&+ \frac{C(4C_{\neq})^2\|n_0\|^2_{L^2}}{2\lambda_AA}e^{-2\lambda_A s}\|n_0\|^2_{L^2}+\frac{1}{16C_{\neq}A}\int_{s}^{t}\|\nabla n_{\neq}\|^2_{L^2}d\tau\\
&+8C_{\neq}e^{-\lambda_A s}\|n_0\|^2_{L^2}.
\end{aligned}
$$
Therefore, when $A$ is large enough, one has
\begin{equation}\label{eq:A.24}
\frac{1}{A}\int_{s}^{t}\|\nabla n_{\neq}\|^2_{L^2}d\tau \leq 16C_{\neq}e^{-\lambda_A s}\|n_0\|^2_{L^2}.
\end{equation}
Combining \eqref{eq:A.21} and \eqref{eq:A.24}, we finish the proof of Lemma \ref{lem:A.4}.
\end{proof}

\subsection{Blow up in finite time}
Here the goal is to construction of examples where solutions to the Keller-Segel equation on $\mathbb{T}^2\times \mathbb{R}$ without advection blow up in finite time.

%\vskip .1in

\begin{lemma}\label{lem:A.5}
There exist $n_0\geq 0, n_0\in H^s(\mathbb{T}^2\times \mathbb{R}),s>\frac{7}{2}$, such that the solution of Equation \eqref{eq:1.1} with $A=0$ on $\mathbb{T}^2\times \mathbb{R}$ blows up in finite time.
\end{lemma}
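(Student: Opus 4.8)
The plan is to prove blow-up by a second-moment (virial) argument, adapting the free-space mechanism of \cite{Corrias.2004,Winkler.2019} to the mixed domain $\mathbb{T}^2\times\mathbb{R}$. First I would fix a point $x_\ast=(x_{\ast,1},x_{\ast,2},0)$ and choose $n_0\ge 0$ smooth, radially symmetric about $x_\ast$, supported in a small ball $B_\rho(x_\ast)$ with $\rho<\pi$ (so that $B_\rho(x_\ast)$ lies inside a single fundamental cell), of prescribed mass $M=\|n_0\|_{L^1}$ and with a small second moment $I(0)=\int|x-x_\ast|^2 n_0\,dx$. Writing the elliptic problem as $c=(1-\Delta)^{-1}n=\mathcal{K}\ast n$, the Green's function $\mathcal{K}$ of $1-\Delta$ on $\mathbb{T}^2\times\mathbb{R}$ is nonnegative and, near the diagonal, carries exactly the three-dimensional singularity, $\mathcal{K}(z)=\frac{1}{4\pi|z|}+R(z)$ with $R$ bounded and smooth on $B_{2\rho}(0)$. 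This local matching of the kernel is what allows the $\mathbb{R}^3$ aggregation mechanism to operate on the torus.

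Next I would track a localized second moment $I(t)=\int\psi\,n\,dx$, where $\psi\ge0$ is a smooth weight equal to $|x-x_\ast|^2$ on $B_\rho(x_\ast)$. Using \eqref{eq:1.1} with $A=0$ and integrating by parts, the diffusion term contributes $\int n\,\Delta\psi\,dx=6M$ plus terms supported off $B_\rho(x_\ast)$, while the chemotactic term contributes $\int n\,\nabla\psi\cdot\nabla c\,dx$, which after symmetrization equals $\tfrac12\iint n(x)n(y)\,(x-y)\cdot\nabla\mathcal{K}(x-y)\,dx\,dy$ up to weight-cutoff errors. Inserting the expansion of $\mathcal{K}$ gives the principal estimate
\[
\frac{d}{dt}I(t)\le 6M-\frac{1}{4\pi}\iint\frac{n(x)n(y)}{|x-y|}\,dx\,dy+C\,M^2,
\]
where the $C\,M^2$ collects the bounded part $R$ of the kernel and the region where $\psi\neq|x-x_\ast|^2$. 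A Cauchy--Schwarz lower bound, using $\iint n(x)n(y)|x-y|\,dx\,dy\le 2M^{3/2}I^{1/2}$, then yields
\[
\iint\frac{n(x)n(y)}{|x-y|}\,dx\,dy\ge\frac{M^{4}}{\iint n(x)n(y)|x-y|\,dx\,dy}\ge\frac{M^{5/2}}{2\,I(t)^{1/2}},
\]
and hence the closed differential inequality $\frac{d}{dt}I\le 6M+C\,M^2-\frac{M^{5/2}}{8\pi\,I^{1/2}}$.

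Finally, choosing the data concentrated enough that $I(0)$ is small compared with $M^{3}$, the last term dominates, so $\frac{d}{dt}I\le -c<0$ for as long as the solution exists; since $I(t)$ then stays below $I(0)$, it cannot leave the concentrated regime, and $I(t)$ would be forced to reach $0$ in finite time. Because $I(t)=\int\psi\,n\ge0$, this contradicts the existence of a global smooth (hence $H^s$) solution, giving finite-time blow-up for the prescribed $M$. I expect the main obstacle to be the domain-induced bookkeeping rather than the algebra: one must control the $C\,M^2$ corrections coming from the non-Newtonian and periodic part $R$ of $\mathcal{K}$ and from the cutoff of $\psi$, and in particular run a short bootstrap showing that the mass genuinely stays concentrated near $x_\ast$ so that the off-$B_\rho$ contributions remain lower order throughout the interval of existence. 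Verifying that all these errors are dominated, uniformly up to the putative blow-up time, by the singular aggregation term is the delicate point, whereas the virial identity and the Cauchy--Schwarz lower bound are routine.
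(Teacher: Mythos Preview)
Your approach is correct and follows the same virial (second-moment) strategy as the paper, but the two executions differ in useful ways. The paper works with the raw weight $|x|^2/2$ and the full periodic Green's function written as a lattice sum; after symmetrizing the aggregation term it bounds the kernel from below on the near-diagonal set $\{|x-y|\le R\}$ and controls the far part $\{|x-y|>R\}$ by $I(t)$ itself, obtaining a \emph{linear} differential inequality of the form $I'\le M-c_1 M^2/R+c_2 M I/R^3$, which becomes strictly negative after choosing $R$ proportional to $M$ and taking $I(0)$ small. By contrast, you localize with a cutoff weight $\psi$, isolate the Newtonian singularity $\frac{1}{4\pi|z|}$ from the smooth remainder of the periodic kernel, and then use the Cauchy--Schwarz lower bound $\iint n(x)n(y)|x-y|^{-1}\,dx\,dy\ge M^{5/2}/(2I^{1/2})$ to get a \emph{nonlinear} inequality $I'\le 6M+CM^2-cM^{5/2}I^{-1/2}$. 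Both routes force $I$ to zero in finite time once $I(0)\ll M^3$. Your localization is cleaner conceptually on the torus, at the price of the mass-concentration bootstrap you flag; the paper's threshold-splitting avoids that bootstrap but is less careful about the non-periodicity of $|x|^2$ in the torus directions.
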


%\vskip .1in
\begin{proof}
Base on the second equation in Equation \eqref{eq:1.1} and singular integral, one has
\begin{equation}\label{eq:A.25}
\nabla c=\nabla(I-\Delta)^{-1}n=\sum_{k\in \mathbb{Z}^3}\int_{\mathbb{T}^2\times \mathbb{R}}\nabla K(x-y+k)n(t,y)dy,
\end{equation}
where $n(t,y)$ is extended periodically to all $\mathbb{R}^3$, $k=(k_1,k_2,0)$ and
$$
K(x)=(4\pi)^{-\frac{3}{2}}\int_{0}^{\infty}e^{-s-\frac{|x|^2}{4s}}s^{-\frac{3}{2}}ds.
$$
Then we deduce by calculation that the \eqref{eq:A.25} can be written as
\begin{equation}\label{eq:A.26}
\nabla c=-\sum_{k\in \mathbb{Z}^3}\int_{\mathbb{T}^2\times \mathbb{R}}(x-y+k)\widetilde{E}(x-y+k)n(t,y)dy,
\end{equation}
where
\begin{equation}\label{eq:A.27}
\widetilde{E}(x)=\frac{C_{d}}{|x|^3}\int_{0}^{\infty}e^{-s-\frac{|x|^2}{4s}}s^{\frac{1}{2}}ds,\ \ \ C_{d}=\frac{\pi^{-\frac{3}{2}}}{2}.
\end{equation}
Define the quantity
$$
I(t)=\int_{\mathbb{T}^2\times \mathbb{R}}\frac{|x|^2}{2}n(t,x)dx,
$$
then we deduce by \eqref{eq:1.1} and \eqref{eq:A.26} that
\begin{equation}\label{eq:A.28}
\begin{aligned}
\frac{d}{dt}I(t)&=\int_{\mathbb{T}^2\times \mathbb{R}}\frac{|x|^2}{2}\Delta ndx-\int_{\mathbb{T}^2\times \mathbb{R}}\frac{|x|^2}{2}\nabla (n\nabla c)dx\\
&=\int_{\mathbb{T}^2\times \mathbb{R}} n_0dx-\int_{\mathbb{T}^2\times \mathbb{R}}x\cdot n\left(\sum_{k\in \mathbb{Z}^3}\int_{\mathbb{T}^2\times \mathbb{R}}(x-y+k)\widetilde{E}(x-y+k)n(t,y)dy\right)dx.
\end{aligned}
\end{equation}
Combining the symmetry of $k$ and \eqref{eq:A.27}, one gets
$$
\begin{aligned}
&\int_{\mathbb{T}^2\times \mathbb{R}}x\cdot n\left(\sum_{k\in \mathbb{Z}^3}\int_{\mathbb{T}^2\times \mathbb{R}}(x-y+k)\widetilde{E}(x-y+k)n(t,y)dy\right)dx\\
=&\int_{\mathbb{T}^2\times \mathbb{R}}x\cdot n\left(\sum_{k\in \mathbb{Z}^3}\int_{\mathbb{T}^2\times \mathbb{R}}(x-y)\widetilde{E}(x-y+k)n(t,y)dy\right)dx\\
=&\sum_{k\in \mathbb{Z}^3}\int_{\mathbb{T}^2\times \mathbb{R}}\int_{\mathbb{T}^2\times \mathbb{R}}x\cdot (x-y)n(t,x)n(t,y)\widetilde{E}(x-y+k)dxdy\\
=&\frac{1}{2}\sum_{k\in \mathbb{Z}^3}\int_{\mathbb{T}^2\times \mathbb{R}}\int_{\mathbb{T}^2\times \mathbb{R}} (x-y)^2n(t,x)n(t,y)\widetilde{E}(x-y+k)dxdy\\
\geq& \frac{1}{2}\int_{\mathbb{T}^2\times \mathbb{R}}\int_{\mathbb{T}^2\times \mathbb{R}} (x-y)^2n(t,x)n(t,y)\widetilde{E}(x-y)dxdy\\
=&\frac{C_{d}}{2}\int_{\mathbb{T}^2\times \mathbb{R}}\int_{\mathbb{T}^2\times \mathbb{R}} \frac{1}{|x-y|}n(t,x)n(t,y)\left(\int_{0}^{\infty}e^{-s-\frac{|x-y|^2}{4s}}s^{\frac{1}{2}}ds\right)dxdy.
\end{aligned}
$$
For any $x,y\in \mathbb{T}^2\times \mathbb{R}$, and $|x-y|\leq R<1$, one has
$$
\int_{0}^{\infty}e^{-s-\frac{|x-y|^2}{4s}}s^{\frac{1}{2}}ds\geq \int_{0}^{\infty}e^{-s-\frac{R^2}{4s}}s^{\frac{1}{2}}ds\geq \int_{0}^{\infty}e^{-s-\frac{1}{4s}}s^{\frac{1}{2}}ds=C_{R}.
$$
Then we have
$$
\begin{aligned}
&\frac{C_{d}}{2}\int_{\mathbb{T}^2\times \mathbb{R}}\int_{\mathbb{T}^2\times \mathbb{R}} \frac{1}{|x-y|}n(t,x)n(t,y)\left(\int_{0}^{\infty}e^{-s-\frac{|x-y|^2}{4s}}s^{\frac{1}{2}}ds\right)dxdy\\
\geq& \frac{C_{d}C_{R}}{2R}\int_{|x-y|\leq R}n(t,x)n(t,y)dxdy\\
=&\frac{C_{d}C_{R}}{2R}\int_{\mathbb{T}^2\times \mathbb{R}}\int_{\mathbb{T}^2\times \mathbb{R}}n(t,x)n(t,y)dxdy-\frac{C_{d}C_{R}}{2R}\int_{|x-y|\geq R}n(t,x)n(t,y)dxdy,
\end{aligned}
$$
and
$$
\begin{aligned}
&\frac{C_{d}C_{R}}{2R}\int_{|x-y|\geq R}n(t,x)n(t,y)dxdy\\
\leq&\frac{C_{d}C_{R}}{2R}\int_{|x-y|\geq R}\frac{|x-y|^2}{R^2}n(t,x)n(t,y)dxdy\\
\leq&\frac{C_{d}C_{R}}{2R^3}\int_{|x-y|\geq R}|x-y|^2n(t,x)n(t,y)dxdy\\
\leq&\frac{C_{d}C_{R}}{R^3}\int_{\mathbb{T}^2\times \mathbb{R}}\int_{\mathbb{T}^2\times \mathbb{R}}(|x|^2+|y|^2)n(t,x)n(t,y)dxdy\\
\leq&\frac{2C_{d}C_{R}}{R^3}\int_{\mathbb{T}^2\times \mathbb{R}}\int_{\mathbb{T}^2\times \mathbb{R}}|x|^2n(t,x)n(t,y)dxdy.
\end{aligned}
$$
Therefore, we have
\begin{equation}\label{eq:A.29}
\begin{aligned}
&-\int_{\mathbb{T}^2\times \mathbb{R}}x\cdot n\left(\sum_{k\in \mathbb{Z}^3}\int_{\mathbb{T}^2\times \mathbb{R}}(x-y)\widetilde{E}(x-y+k)n(t,y)dy\right)dx\\
&\leq -\frac{C_{d}}{2}\int_{\mathbb{T}^2\times \mathbb{R}}\int_{\mathbb{T}^2\times \mathbb{R}} \frac{1}{|x-y|}n(t,x)n(t,y)\left(\int_{0}^{\infty}e^{-s-\frac{|x-y|^2}{4s}}s^{\frac{1}{2}}ds\right)dxdy\\
&\leq -\frac{C_{d}C_{R}}{2R}\int_{\mathbb{T}^2\times \mathbb{R}}\int_{\mathbb{T}^2\times \mathbb{R}}n(t,x)n(t,y)dxdy+\frac{C_{d}C_{R}}{2R}\int_{|x-y|\geq R}n(t,x)n(t,y)dxdy\\
&\leq -\frac{C_{d}C_{R}}{2R}\int_{\mathbb{T}^2\times \mathbb{R}}\int_{\mathbb{T}^2\times \mathbb{R}}n(t,x)n(t,y)dxdy+\frac{2C_{d}C_{R}}{R^3}\int_{\mathbb{T}^2\times \mathbb{R}}\int_{\mathbb{T}^2\times \mathbb{R}}|x|^2n(t,x)n(t,y)dxdy\!\\
&=-\frac{C_{d}C_{R}}{2R}\left(\int_{\mathbb{T}^2\times \mathbb{R}}n(t,x)dx\right)^2+\frac{2C_{d}C_{R}}{R^3}\left(\int_{\mathbb{T}^2\times \mathbb{R}}n(t,y)dy\right)\int_{\mathbb{T}^2\times \mathbb{R}}|x|^2n(t,x)dxdy.
\end{aligned}
\end{equation}
Combining \eqref{eq:A.28} and \eqref{eq:A.29}, we obtain
$$
\frac{d}{dt}I(t)\leq M-\frac{C_{d}C_{R}}{2R}M^2+\frac{4C_{d}C_{R}}{R^3}MI(t).
$$
For $\epsilon_0> 0$ is small enough, choose
$$
R=\epsilon_0C_{d}C_{R}M<1,
$$
then we have
$$
\frac{d}{dt}I(t)\lesssim M\left( \frac{C}{\epsilon_0C^2_dC^2_{R}M^3}I(t)-1\right).
$$
If $I(0)$ is small enough, then  for any $t\geq 0$, one has
$$
\frac{d}{dt}I(t)\lesssim M\left( \frac{C}{\epsilon_0C^2_dC^2_{R}M^3}I(0)-1\right)<0,
$$
this leads to a contradiction with $I(t)$ cannot be negative. %This completes the proof of Lemma \ref{lem:A.5}.
We finish the proof of Lemma \ref{lem:A.5}.
\end{proof}

%\vskip .05in
%\section*{Acknowledgement}
%The authors would like to thank Dr.Yuanyuan Feng for helpful discussions. The work of the first author was partially %supported by  Shanghai Science and Technology Innovation Action Plan (Grant No.21JC1403600). The work of the second %author was partially supported by the National Natural Science Foundation of China (Grant %No.12271357,11831011,12161141004) and Shanghai Science and Technology Innovation Action Plan (Grant No.21JC1403600).

\noindent \textbf{Acknowledgement.} The authors would like to thank Dr.Yuanyuan Feng for helpful discussions. The work of the first author was partially supported by  Shanghai Science and Technology Innovation Action Plan (Grant No.21JC1403600). The work of the second author was partially supported by the National Natural Science Foundation of China (Grant No.12271357,11831011,12161141004) and Shanghai Science and Technology Innovation Action Plan (Grant No.21JC1403600).

\bibliographystyle{abbrv}
\bibliography{3DKS-flow-ref}

\end{document}